\newtheorem{thm}{Theorem}[section]
\newtheorem{rmk}{Remark}[section]
\newtheorem{prop}[thm]{Proposition}
\newtheorem{cor}[thm]{Corollary}
\newtheorem{lem}[thm]{Lemma}
\numberwithin{equation}{section}
  \def\xI{0.866025} \def\xII{1.73205} \def\xIII{2.59808} \def\xIV{3.4641} \def\xV{4.33013} \def\xVI{5.19615} \def\xVII{6.062175}
 \def\pav{\rput{60}(0,0){\psline(0,0)(0,3)} \rput{-60}(0,0){\psline(0,0)(0,4)} \rput{0}(-\xIII,1.5){\psline(0,0)(0,5)} \rput{0}(\xIV,2){\psline(0,0)(0,5)} \rput{60}(\xIV,7){\psline(0,0)(0,3)}
 \rput{120}(0.866025,8.5){\psline(0,0)(0,4)} \rput{60}(\xI,0.5){\psline(0,0)(0,1)\psline(0,2)(0,4)} \rput{60}(\xII,1){\psline(0,1)(0,3)\psline(0,4)(0,5)} \rput{60}(\xIII,1.5){\psline(0,1)(0,3)\psline(0,5)(0,6)}
 \rput{60}(\xIV,2){\psline(0,2)(0,4)\psline(0,6)(0,7)} \rput{60}(\xIV,3){\psline(0,1)(0,2)\psline(0,3)(0,4)\psline(0,5)(0,6)} \rput{60}(\xIV,4){\psline(0,1)(0,3)\psline(0,5)(0,6)}
 \rput{60}(\xIV,5){\psline(0,1)(0,3)\psline(0,4)(0,5)} \rput{60}(\xIV,6){\psline(0,0)(0,2)\psline(0,3)(0,4)}
 \rput{120}(\xIV,3){\psline(0,0)(0,3)\psline(0,4)(0,5)} \rput{120}(\xIV,4){\psline(0,0)(0,2)\psline(0,3)(0,5)} \rput{120}(\xIV,5){\psline(0,0)(0,1)\psline(0,2)(0,3)\psline(0,4)(0,6)}
 \rput{120}(\xIV,6){\psline(0,0)(0,1)\psline(0,2)(0,3)\psline(0,4)(0,6)} \rput{120}(\xIV,7){\psline(0,1)(0,2)\psline(0,3)(0,6)} \rput{120}(\xIII,7.5){\psline(0,1)(0,5)}
 \rput{120}(\xII,8){\psline(0,0)(0,3)\psline(0,4)(0,5)} \rput{0}(-\xII,1){\psline(0,0)(0,4)\psline(0,5)(0,6)} \rput{0}(-\xI,0.5){\psline(0,0)(0,1)\psline(0,2)(0,6)} \rput{0}(0,0){\psline(0,1)(0,2)\psline(0,3)(0,7)}
 \rput{0}(\xI,0.5){\psline(0,0)(0,1)\psline(0,2)(0,4)\psline(0,5)(0,7)} \rput{0}(\xII,1){\psline(0,0)(0,2)\psline(0,3)(0,5)\psline(0,6)(0,7)} \rput{0}(\xIII,1.5){\psline(0,0)(0,4)\psline(0,5)(0,6)}}
 \def\centpav{ \rput{-60}(0,0){\psline(0,0)(0,4)} \rput{0}(\xIV,2){\psline(0,0)(0,5)} \rput{0}(-\xIV,2){\psline(0,0)(0,5)} \rput{-60}(-\xIV,7){\psline(0,0)(0,4)}
 \rput{60}(\xIV,7){\psline(0,0)(0,4)} \rput{-60}(-\xIV,7){\psline(0,0)(0,4)}
 \rput{60}(0,0){\psline(0,0)(0,4)} \rput{60}(\xI,0.5){\psline(0,1)(0,5)} \rput{60}(\xII,1){\psline(0,2)(0,6)} \rput{60}(\xIII,1.5){\psline(0,2)(0,3)\psline(0,4)(0,7)}
 \rput{60}(\xIV,2){\psline(0,2)(0,3)\psline(0,4)(0,5)\psline(0,6)(0,8)} \rput{60}(\xIV,3){\psline(0,1)(0,2)\psline(0,3)(0,4)\psline(0,5)(0,7)} \rput{60}(\xIV,4){\psline(0,1)(0,3)\psline(0,5)(0,7)}
 \rput{60}(\xIV,5){\psline(0,1)(0,3)\psline(0,4)(0,6)} \rput{60}(\xIV,6){\psline(0,0)(0,2)\psline(0,3)(0,5)}
 \rput{-60}(-\xI,0.5){\psline(0,1)(0,5)} \rput{-60}(-\xII,1){\psline(0,2)(0,6)} \rput{-60}(-\xIII,1.5){\psline(0,2)(0,5)\psline(0,6)(0,7)} \rput{-60}(-\xIV,2){\psline(0,3)(0,6)\psline(0,7)(0,8)}
 \rput{-60}(-\xIV,3){\psline(0,2)(0,5)\psline(0,6)(0,7)} \rput{-60}(-\xIV,4){\psline(0,2)(0,6)} \rput{-60}(-\xIV,5){\psline(0,1)(0,2)\psline(0,3)(0,6)} \rput{-60}(-\xIV,6){\psline(0,0)(0,1)\psline(0,2)(0,5)}
 \rput{0}(-\xIII,1.5){\psline(0,0)(0,4)\psline(0,5)(0,6)} \rput{0}(-\xII,1){\psline(0,0)(0,4)\psline(0,5)(0,6)} \rput{0}(-\xI,0.5){\psline(0,0)(0,3)\psline(0,4)(0,6)}
 \rput{0}(0,0){\psline(0,0)(0,2)\psline(0,3)(0,4)\psline(0,5)(0,7)} \rput{0}(\xI,0.5){\psline(0,0)(0,2)\psline(0,3)(0,4)\psline(0,5)(0,7)} \rput{0}(\xII,1){\psline(0,0)(0,2)\psline(0,3)(0,5)\psline(0,6)(0,7)}
 \rput{0}(\xIII,1.5){\psline(0,0)(0,4)\psline(0,5)(0,6)}
 \pspolygon[fillstyle=solid,fillcolor=lightgray](0,4)(-\xI,4.5)(0,5)(\xI,4.5)}
 \def\semipavf{ \rput{60}(0,0){\psline(0,0)(0,7)} \rput{60}(0,1){\psline(0,0)(0,3)\psline(0,4)(0,7)} \rput{60}(0,2){\psline(0,0)(0,1)\psline(0,2)(0,4)\psline(0,5)(0,7)} \rput{60}(0,3){\psline(0,0)(0,2)\psline(0,3)(0,6)}
 \rput{60}(0,4){\psline(0,0)(0,1)\psline(0,3)(0,6)} \rput{60}(0,5){\psline(0,0)(0,1)\psline(0,2)(0,4)\psline(0,5)(0,6)} \rput{60}(0,6){\psline(0,0)(0,3)\psline(0,4)(0,5)} \rput{60}(0,7){\psline(0,1)(0,3)\psline(0,4)(0,5)}
 \rput{60}(0,8){\psline(0,1)(0,4)} \rput{60}(0,9){\psline(0,0)(0,2)\psline(0,3)(0,4)} \rput{60}(0,10){\psline(0,0)(0,3)} \rput{60}(0,11){\psline(0,0)(0,2)} \rput{60}(0,12){\psline(0,0)(0,1)}
 \rput{120}(0,1){\psline(0,0)(0,1)} \rput{120}(0,2){\psline(0,0)(0,2)} \rput{120}(0,3){\psline(0,1)(0,3)} \rput{120}(0,4){\psline(0,0)(0,2)\psline(0,3)(0,4)} \rput{120}(0,5){\psline(0,1)(0,4)}
 \rput{120}(0,6){\psline(0,1)(0,3)\psline(0,4)(0,5)} \rput{120}(0,7){\psline(0,0)(0,3)\psline(0,4)(0,5)} \rput{120}(0,8){\psline(0,0)(0,1)\psline(0,2)(0,4)\psline(0,5)(0,6)}
 \rput{120}(0,9){\psline(0,0)(0,1)\psline(0,3)(0,5)\psline(0,6)(0,7)} \rput{120}(0,10){\psline(0,1)(0,2)\psline(0,3)(0,5)\psline(0,6)(0,7)}\rput{120}(0,11){\psline(0,0)(0,1)\psline(0,2)(0,4)\psline(0,5)(0,7)}
 \rput{120}(0,12){\psline(0,0)(0,3)\psline(0,4)(0,7)} \rput{120}(0,13){\psline(0,0)(0,7)}
 \rput{0}(0,0){\psline(0,2)(0,3)\psline(0,4)(0,6)\psline(0,7)(0,10)} \rput{0}(-\xI,0.5){\psline(0,2)(0,3)\psline(0,4)(0,8)\psline(0,9)(0,10)}
 \rput{0}(-\xII,1){\psline(0,1)(0,2)\psline(0,3)(0,5)\psline(0,6)(0,8)\psline(0,9)(0,10)} \rput{0}(-\xIII,1.5){\psline(0,1)(0,4)\psline(0,6)(0,9)}
 \rput{0}(-\xIV,2){\psline(0,0)(0,1)\psline(0,2)(0,4)\psline(0,5)(0,7)\psline(0,8)(0,9)} \rput{0}(-\xV,2.5){\psline(0,0)(0,2)\psline(0,3)(0,5)\psline(0,6)(0,8)}
 \rput{0}(-\xVI,3){\psline(0,0)(0,2)\psline(0,3)(0,7)} \rput{0}(-\xVII,3.5){\psline(0,0)(0,6)}
 \def\completion{\rput{-60}(0,0){\psline(0,0)(0,1)}\rput{-120}(0,1){\psline(0,1)(0,0)}} \rput{0}(0,0){\completion} \rput{0}(0,1){\completion} \rput{0}(0,3){\completion} \rput{0}(0,6){\completion} \rput{0}(0,10){\completion}
 \rput{0}(0,11){\completion} \rput{0}(0,12){\completion} \psline[linestyle=dotted](0,-0.5)(0,14) \uput[0](0,13.5){$\ell$}}
\def\semipava{ \rput{60}(0,0){\psline(0,0)(0,7)} \rput{60}(0,1){\psline(0,0)(0,3)\psline(0,4)(0,7)} \rput{60}(0,2){\psline(0,0)(0,1)\psline(0,2)(0,4)\psline(0,5)(0,7)} \rput{60}(0,3){\psline(0,0)(0,2)\psline(0,3)(0,6)}
 \rput{60}(0,4){\psline(0,0)(0,2)\psline(0,3)(0,6)} \rput{60}(0,5){\psline(0,0)(0,2)\psline(0,2)(0,4)\psline(0,5)(0,6)} \rput{60}(0,6){\psline(0,0)(0,3)\psline(0,4)(0,5)} \rput{60}(0,7){\psline(0,1)(0,3)\psline(0,4)(0,5)}
 \rput{60}(0,8){\psline(0,1)(0,4)} \rput{60}(0,9){\psline(0,0)(0,2)\psline(0,3)(0,4)} \rput{60}(0,10){\psline(0,0)(0,3)} \rput{60}(0,11){\psline(0,0)(0,2)} \rput{60}(0,12){\psline(0,0)(0,1)}
 \rput{120}(0,1){\psline(0,0)(0,1)} \rput{120}(0,2){\psline(0,0)(0,2)} \rput{120}(0,3){\psline(0,1)(0,3)} \rput{120}(0,4){\psline(0,1)(0,2)\psline(0,3)(0,4)} \rput{120}(0,5){\psline(0,2)(0,4)}
 \rput{120}(0,6){\psline(0,2)(0,3)\psline(0,4)(0,5)} \rput{120}(0,7){\psline(0,0)(0,1)\psline(0,2)(0,3)\psline(0,4)(0,5)} \rput{120}(0,8){\psline(0,0)(0,1)\psline(0,2)(0,4)\psline(0,5)(0,6)}
 \rput{120}(0,9){\psline(0,0)(0,1)\psline(0,3)(0,5)\psline(0,6)(0,7)} \rput{120}(0,10){\psline(0,1)(0,2)\psline(0,3)(0,5)\psline(0,6)(0,7)} \rput{120}(0,11){\psline(0,0)(0,1)\psline(0,2)(0,4)\psline(0,5)(0,7)}
 \rput{120}(0,12){\psline(0,0)(0,3)\psline(0,4)(0,7)} \rput{120}(0,13){\psline(0,0)(0,7)}
 \rput{0}(0,0){\psline(0,2)(0,6)\psline(0,7)(0,10)} \rput{0}(-\xI,0.5){\psline(0,2)(0,8)\psline(0,9)(0,10)} \rput{0}(-\xII,1){\psline(0,1)(0,2)\psline(0,3)(0,5)\psline(0,6)(0,8)\psline(0,9)(0,10)}
 \rput{0}(-\xIII,1.5){\psline(0,1)(0,4)\psline(0,6)(0,9)} \rput{0}(-\xIV,2){\psline(0,0)(0,1)\psline(0,2)(0,4)\psline(0,5)(0,7)\psline(0,8)(0,9)} \rput{0}(-\xV,2.5){\psline(0,0)(0,2)\psline(0,3)(0,5)\psline(0,6)(0,8)}
 \rput{0}(-\xVI,3){\psline(0,0)(0,2)\psline(0,3)(0,7)} \rput{0}(-\xVII,3.5){\psline(0,0)(0,6)}
 \def\completion{\rput{-60}(0,0){\psline(0,0)(0,1)}\rput{-120}(0,1){\psline(0,1)(0,0)}} \rput{0}(0,0){\completion} \rput{0}(0,1){\completion} \rput{0}(0,6){\completion} \rput{0}(0,10){\completion}
 \rput{0}(0,11){\completion} \rput{0}(0,12){\completion} \psline[linestyle=dotted](0,-0.5)(0,14) \uput[0](0,13.5){$\ell$}
 \pspolygon[fillstyle=solid,fillcolor=lightgray](-\xII,6)(-\xIII,6.5)(-\xII, 7)}
 \def\semipavb{
 \rput{60}(0,0){\psline(0,0)(0,7)} \rput{60}(0,1){\psline(0,0)(0,3)\psline(0,4)(0,7)} \rput{60}(0,2){\psline(0,0)(0,1)\psline(0,2)(0,4)\psline(0,5)(0,7)} \rput{60}(0,3){\psline(0,0)(0,2)\psline(0,3)(0,6)}
 \rput{60}(0,4){\psline(0,1)(0,2)\psline(0,3)(0,6)} \rput{60}(0,5){\psline(0,1)(0,2)\psline(0,3)(0,4)\psline(0,5)(0,6)} \rput{60}(0,6){\psline(0,1)(0,2)\psline(0,4)(0,5)} \rput{60}(0,7){\psline(0,1)(0,2)\psline(0,4)(0,5)}
 \rput{60}(0,8){\psline(0,1)(0,2)\psline(0,3)(0,4)} \rput{60}(0,9){\psline(0,1)(0,2)\psline(0,3)(0,4)} \rput{60}(0,10){\psline(0,1)(0,3)} \rput{60}(0,11){\psline(0,0)(0,2)} \rput{60}(0,12){\psline(0,0)(0,1)}
 \rput{120}(0,1){\psline(0,0)(0,1)} \rput{120}(0,2){\psline(0,0)(0,2)} \rput{120}(0,3){\psline(0,1)(0,3)} \rput{120}(0,4){\psline(0,0)(0,2)\psline(0,3)(0,4)} \rput{120}(0,5){\psline(0,0)(0,1)\psline(0,2)(0,4)}
 \rput{120}(0,6){\psline(0,0)(0,1)\psline(0,2)(0,3)\psline(0,4)(0,5)} \rput{120}(0,7){\psline(0,0)(0,1)\psline(0,2)(0,3)\psline(0,4)(0,5)} \rput{120}(0,8){\psline(0,0)(0,1)\psline(0,2)(0,4)\psline(0,5)(0,6)}
 \rput{120}(0,9){\psline(0,0)(0,1)\psline(0,2)(0,5)\psline(0,6)(0,7)} \rput{120}(0,10){\psline(0,0)(0,1)\psline(0,2)(0,5)\psline(0,6)(0,7)} \rput{120}(0,11){\psline(0,0)(0,1)\psline(0,2)(0,4)\psline(0,5)(0,7)}
 \rput{120}(0,12){\psline(0,0)(0,3)\psline(0,4)(0,7)} \rput{120}(0,13){\psline(0,0)(0,7)} \rput{0}(0,0){\psline(0,2)(0,3)\psline(0,4)(0,11)}\rput{0}(-\xI,0.5){\psline(0,2)(0,10)}
 \rput{0}(-\xII,1){\psline(0,1)(0,2)\psline(0,3)(0,10)} \rput{0}(-\xIII,1.5){\psline(0,1)(0,4)\psline(0,6)(0,9)} \rput{0}(-\xIV,2){\psline(0,0)(0,1)\psline(0,2)(0,4)\psline(0,5)(0,7)\psline(0,8)(0,9)}
 \rput{0}(-\xV,2.5){\psline(0,0)(0,2)\psline(0,3)(0,5)\psline(0,6)(0,8)} \rput{0}(-\xVI,3){\psline(0,0)(0,2)\psline(0,3)(0,7)} \rput{0}(-\xVII,3.5){\psline(0,0)(0,6)}
 \def\completion{\rput{-60}(0,0){\psline(0,0)(0,1)}\rput{-120}(0,1){\psline(0,1)(0,0)}}  \rput{0}(0,0){\completion} \rput{0}(0,1){\completion} \rput{0}(0,3){\completion}  \rput{0}(0,11){\completion}
 \rput{0}(0,12){\completion} \psline[linestyle=dotted](0,-0.5)(0,14) \uput[0](0,13.5){$\ell$} \pspolygon[fillstyle=solid,fillcolor=lightgray](-\xIII,5.5)(-\xIII,7.5)(-\xV, 6.5)}
 \def\semipav{ \rput{60}(0,0){\psline(0,0)(0,7)} \rput{60}(0,1){\psline(0,0)(0,3)\psline(0,4)(0,7)} \rput{60}(0,2){\psline(0,0)(0,1)\psline(0,2)(0,4)\psline(0,5)(0,7)} \rput{60}(0,3){\psline(0,0)(0,2)\psline(0,3)(0,6)}
 \rput{60}(0,4){\psline(0,0)(0,1)\psline(0,3)(0,6)} \rput{60}(0,5){\psline(0,0)(0,1)\psline(0,2)(0,4)\psline(0,5)(0,6)} \rput{60}(0,6){\psline(0,0)(0,3)\psline(0,4)(0,5)} \rput{60}(0,7){\psline(0,1)(0,3)\psline(0,4)(0,5)}
 \rput{60}(0,8){\psline(0,1)(0,4)} \rput{60}(0,9){\psline(0,0)(0,2)\psline(0,3)(0,4)} \rput{60}(0,10){\psline(0,0)(0,3)} \rput{60}(0,11){\psline(0,0)(0,2)} \rput{60}(0,12){\psline(0,0)(0,1)} \rput{120}(0,1){\psline(0,0)(0,1)}
 \rput{120}(0,2){\psline(0,0)(0,2)} \rput{120}(0,3){\psline(0,1)(0,3)} \rput{120}(0,4){\psline(0,0)(0,2)\psline(0,3)(0,4)} \rput{120}(0,5){\psline(0,1)(0,4)} \rput{120}(0,6){\psline(0,1)(0,3)\psline(0,4)(0,5)}
 \rput{120}(0,7){\psline(0,0)(0,3)\psline(0,4)(0,5)} \rput{120}(0,8){\psline(0,0)(0,1)\psline(0,2)(0,4)\psline(0,5)(0,6)} \rput{120}(0,9){\psline(0,0)(0,1)\psline(0,3)(0,5)\psline(0,6)(0,7)}
 \rput{120}(0,10){\psline(0,1)(0,2)\psline(0,3)(0,5)\psline(0,6)(0,7)} \rput{120}(0,11){\psline(0,0)(0,1)\psline(0,2)(0,4)\psline(0,5)(0,7)} \rput{120}(0,12){\psline(0,0)(0,3)\psline(0,4)(0,7)}
 \rput{120}(0,13){\psline(0,0)(0,7)} \rput{0}(0,0){\psline(0,2)(0,3)\psline(0,4)(0,6)\psline(0,7)(0,10)} \rput{0}(-\xI,0.5){\psline(0,2)(0,3)\psline(0,4)(0,8)\psline(0,9)(0,10)}
 \rput{0}(-\xII,1){\psline(0,1)(0,2)\psline(0,3)(0,5)\psline(0,6)(0,8)\psline(0,9)(0,10)} \rput{0}(-\xIII,1.5){\psline(0,1)(0,4)\psline(0,6)(0,9)}
 \rput{0}(-\xIV,2){\psline(0,0)(0,1)\psline(0,2)(0,4)\psline(0,5)(0,7)\psline(0,8)(0,9)} \rput{0}(-\xV,2.5){\psline(0,0)(0,2)\psline(0,3)(0,5)\psline(0,6)(0,8)}
 \rput{0}(-\xVI,3){\psline(0,0)(0,2)\psline(0,3)(0,7)} \rput{0}(-\xVII,3.5){\psline(0,0)(0,6)}\def\completion{\rput{-60}(0,0){\psline(0,0)(0,1)}\rput{-120}(0,1){\psline(0,1)(0,0)}}
 \rput{0}(0,0){\completion} \rput{0}(0,1){\completion} \rput{0}(0,3){\completion} \rput{0}(0,10){\completion} \rput{0}(0,11){\completion} \rput{0}(0,12){\completion}}
 \def\pathsemipav{ \semipav\pspolygon[fillstyle=solid,fillcolor=lightgray](0,6)(-\xI,6.5)(0,7)
 \def\mI{-0.43301} \def\mII{-1.29903} \def\mIII{-2.165063} \def\mIV{-3.031088} \def\mV{-3.8971143} \def\mVI{-4.7631397} \def\mVII{-5.629165126}
\psdots[dotsize=7pt](\mI,0.25)(\mII,0.75)(\mIII,1.25)(\mIV,1.75)(\mV,2.25)(\mVI,2.75)(\mVII,3.25)\psdots[dotsize=7pt](\mI,1.25)(\mI,3.25)(\mI,6.25)(\mI,10.25)(\mI,11.25)(\mI,12.25)
\psset{linestyle=dotted,linewidth=2.5pt}\rput{-60}(\mII,0.75){\psline(0,0)(0,1)}\psline(\mIII,1.25)(\mI,2.25)(\mI,3.25)\psline(\mIV,1.75)(\mIII,2.25)(\mIII,3.25)(\mI,4.25)(\mI,6.25)
\psline(\mV,2.25)(\mV,3.25)(\mIV,3.75)(\mIV,5.75)(\mII,6.75)(\mII,8.75)(\mI,9.25)(\mI,10.25)\psline(\mVI,2.75)(\mVI,4.75)(\mV,5.25)(\mV,6.25)(\mIII,7.25)(\mIII,9.25)(\mII,9.75)(\mII,10.75)(\mI,11.25)
\psline(\mVII,3.25)(\mVII,5.25)(\mVI,5.75)(\mVI,7.75)(\mV,8.25)(\mV,9.25)(\mIV,9.75)(\mIV,10.75)(\mI,12.25)}
\def\path{\psgrid[subgriddiv=1,griddots=5,gridwidth=0.5pt,gridlabels=0pt,dotsize=5pt,showpoints=true,ticks=none](-8,-1)(1,14)\psaxes[showpoints=true,ticks=none,labels=none]{->}(0,0)(-8,-1)(1,14)
 \psdots[dotsize=7pt](-1,1)(-2,2)(-3,3)(-4,4)(-5,5)(-6,6)(-7,7) \psdots[dotsize=7pt](-1,13)(-1,12)(-1,11)(-1,7)(-1,4)(-1,2)  \psline[linewidth=2pt](-7,7)(-7,9)(-6,9)(-6,11)(-5,11)(-5,12)(-4,12)(-4,13)(-1,13)
  \psline[linewidth=2pt](-6,6)(-6,8)(-5,8)(-5,9)(-3,9)(-3,11)(-2,11)(-2,12)(-1,12)  \psline[linewidth=2pt](-5,5)(-5,6)(-4,6)(-4,8)(-2,8)(-2,10)(-1,10)(-1,11)  \psline[linewidth=2pt](-4,4)(-3,4)(-3,5)(-1,5)(-1,7)
  \psline[linewidth=2pt](-3,3)(-1,3)(-1,4) \psline[linewidth=2pt](-2,2)(-1,2)}
\def\Pf{\operatorname{Pf}}
\def\s{\sigma}
\def\l{\lambda}
\def\Acal{\mathcal{A}}
\def\Scal{\mathcal{S}}
\def\Ical{\mathcal{I}}
\def\T{\mathcal{T}}
\def\inv{\rm inv}
\def\sgn{\rm sgn}
\def\Mtil{\widetilde M}
\def\Btil{\widetilde B}
\begin{document}

\title[centered symmetric  rhombus tilings of a hexagon]{Enumeration of symmetric centered\\ rhombus tilings of a hexagon}

\author[A. Kasraoui]{Anisse Kasraoui}
\author[C. Krattenthaler]{Christian Krattenthaler}
\address{Fakult\"at f\"ur Mathematik, Universit\"at Wien,
Nordbergstrasse 15,
A-1090 Vienna,
Austria}
\email{anisse.kasraoui@univie.ac.at, christian.krattenthaler@univie.ac.at}
\thanks{Research supported by the grant S9607-N13 from Austrian Science Foundation FWF
 in the framework of the National Research Network  ``Analytic Combinatorics and Probabilistic Number theory".}

\date{}

\begin{abstract}
A  rhombus tiling of a hexagon is said to be centered if it contains the central lozenge. We compute the number of vertically symmetric 
rhombus tilings of a hexagon with side lengths $a, b, a, a, b, a$ which are centered.  When $a$ is odd and $b$ is even, this shows that the 
probability  that a random vertically symmetric rhombus tiling of a $a, b, a, a, b, a$ hexagon is centered is exactly 
the same as the probability that a random rhombus tiling of a $a, b, a, a, b, a$ hexagon is centered. 
This also leads to a factorization theorem for the number of all rhombus tilings of a hexagon which are centered.
\end{abstract}

\maketitle

%
%
\section{Introduction}

 The enumeration of plane partitions, equivalently of rhombus tilings of a hexagon,
was initiated by MacMahon in the early twentieth century. Let $a$, $b$ and $c$ be positive integers.
By an \textit{$(a,b,c)$ hexagon} we mean an equi-angular hexagon with side-lengths $a,b,c,a,b,c$. We
always draw such a hexagon with  the sides of lengths $a,b,c,a,b,c$ in clockwise order starting from
the southwestern side, so that the sides of length $b$ are vertical.
  From a classical result of MacMahon~\cite[Sect. 429, $q\to1$]{MacM}, we know that
the number of tilings of an $(a,b,c)$ hexagon by rhombi whose sides have length 1 and whose angles
measure~60 and~120 degrees (equivalently, of plane partitions contained in an $a\times b \times c$ box)
is given by the product
\begin{align}\label{eq:def P(a,b)}
 T(a,b,c)=\prod_{i=1}^a\prod_{j=1}^b\prod_{k=1}^c \frac{i+j+k-1}{i+j+k-2}.
\end{align}
We call such tilings \textit{rhombus tilings}. The picture on the left in Figure~\ref{fig:Til} shows a rhombus tiling of a $(3,5,4)$ hexagon.
MacMahon also conjectured that the number of \textit{vertically symmetric} rhombus tilings of an $(a,b,a)$ hexagon (i.e, that are invariant under reflection
across the vertical symmetry axis of the hexagon; e.g., see the picture on the right in Figure~\ref{fig:H-F}) is given by the product
\begin{align}\label{eq:def N(a,b)}
ST(a,b,a)=\prod_{i=1}^a \frac{2i+b-1}{2i-1} \prod_{1\leq i<j\leq a}
\frac{i+j+b-1}{i+j-1}.
\end{align}
 This was first proved by Andrews~\cite{Andrews}. Other proofs, and refinements, were later found by e.g. Gordon~\cite{Gordon},
Macdonald~\cite[pp. 83--85]{McDon}, Proctor~\cite[Prop. 7.3]{Proct}, Fischer~\cite{Ilse2}, and the second author of the present
paper~\cite[Theorem 13]{Kratt}.
 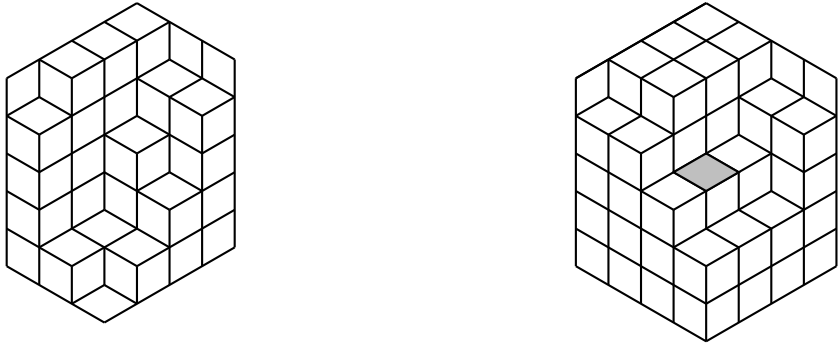
\begin{figure}[h!]\label{fig:Til}
\psset{xunit=0.5,yunit=0.5}
\begin{pspicture}(0,0.5)(18,9)
\rput{0}(0,1){\pav}
\rput{0}(16,0.5){\centpav}
\end{pspicture}
\caption{\textit{Left}: a rhombus tiling of a $(3,5,4)$ hexagon.
\textit{Right}: a centered rhombus tiling of a $(4,5,4)$ hexagon where the central lozenge is marked.}
\end{figure}

During the last two decades, there has been an increasing interest
in enumerating rhombus tilings of planar regions with holes. One of
the first result in this direction was the counting of rhombus
tilings of an $(a,b,a)$ hexagon that contain the central rhombus
(these tilings can be seen as the rhombus tilings of the region
obtained from an $(a,b,a)$ hexagon by removing its central rhombus).
We call such tilings \textit{centered}. Note that an $(a,b,a)$ hexagon
has a central rhombus only if~$a$ and~$b$ have opposite parity. The picture on the right in
Figure~\ref{fig:Til} shows a centered tiling of a $(4,5,4)$ hexagon.
The corresponding enumeration result (see Theorem~\ref{thm:Pcentered(2n+1,2x)} below)
is independently due to Ciucu and the second author~\cite[Theorems 1
and 2 and Corollaries 3 and 4]{CiuKrat-centered} and to Gessel and
Helfgott~\cite[Theorems 15 and 17]{GeHel}.
 For nonnegative integers~$n$ and~$x$, define
\begin{align}\label{eq:def Q(x,n)}
Q(n,x)&=\frac{1}{2}\frac{(2n)!^2(2x)!(x+2n-1)!}{(n)!^2(x)!(2x+4n-2)!}
       \left(\sum_{i=0}^{n-1} \frac{(-1)^{n-i-1}}{2n-2i-1} \frac{(x+n-i)_{2i}}{i!^2}\right),
\end{align}
where $(a)_k$ is for the Pochhammer symbol, defined by $(a)_k:=a(a+1)\cdots (a+k-1)$ for $k\geq 1$,
and $(a)_0:=1$.
\begin{thm}\label{thm:Pcentered(2n+1,2x)}
Let $n$ and~$x$ be two nonnegative integers.

\textit{(i)} For $x\geq1$, the number of centered rhombus tilings of
a $(2n+1,2x,2n+1)$ hexagon is \linebreak $Q(n+1,x)\cdot
T(2n+1,2x,2n+1),$ where $Q$ and $T$ are defined by~\eqref{eq:def
Q(x,n)} and~\eqref{eq:def P(a,b)}. Similarly, for $n\geq1$, the
number of centered rhombus tilings of a $(2n,2x+1,2n)$ hexagon is
$Q(n,x+1)\cdot T(2n,2x+1,2n)$.

\textit{(ii)} For $n\geq1$, exactly one third of the rhombus tilings of a $(2n+1,2n,2n+1)$
hexagon are centered. The same is true for a $(2n,2n+1,2n)$ hexagon.

\textit{(iii)} Let $a$ be a nonnegative real number. For $x\sim an$, the
probability that a random rhombus tiling of a $(2n+1,2x,2n+1)$
hexagon is centered is $\sim (2/\pi) \arcsin(1/(a+1))$ as $n$ tends
to infinity. The same is true for a $(2n,2x+1,2n)$ hexagon.
\end{thm}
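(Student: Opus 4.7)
My plan is to encode rhombus tilings via families of non-intersecting lattice paths and then evaluate the resulting determinants. For part~(i), I would use the standard bijection between rhombus tilings of an $(a,b,c)$ hexagon and families of $c$ non-intersecting lattice paths from $c$ fixed starting points to $c$ fixed end points (with steps encoding the two non-horizontal rhombus orientations); the central rhombus corresponds to a specific horizontal unit edge that must be traversed by exactly one of the paths. By the Lindstr\"om--Gessel--Viennot lemma, the number of such constrained families is a determinant whose entries are explicit binomial coefficients, while the total number of tilings is MacMahon's formula~\eqref{eq:def P(a,b)}. The central task is then to evaluate the constrained determinant in closed form, which I would attack with the row/column manipulations and condensation techniques from the second author's survey on determinant calculus. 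The presence of a single-variable sum in~\eqref{eq:def Q(x,n)} rather than a pure product already signals that the result is genuinely not of product type; one should expect a telescoping pattern in the minors that collapses to that sum.

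For part~(ii), I would specialize the formula of~(i) to $x=n$ in the first family and to $x+1=n$ in the second, so that the claim ``exactly one third of the tilings are centered'' reduces to the identities $Q(n+1,n)=1/3$ and $Q(n,n+1)=1/3$. Each of these is a closed-form evaluation of the finite sum
\[
\sum_{i=0}^{n}\frac{(-1)^{n-i}}{2n-2i+1}\,\frac{(2n+1-i)_{2i}}{i!^2},
\]
which can be handled either algorithmically by Zeilberger's creative telescoping or by recognizing it as a special value of a Saalsch\"utzian ${}_{3}F_{2}$-series.

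For part~(iii), I would carry out the asymptotics of $Q(n+1,x)$ as $n\to\infty$ with $x\sim an$. Stirling's formula applied to the prefactor produces an explicit algebraic function of $a$ times $n^{-1/2}$; the alternating sum, after rescaling, approaches a Riemann sum for the integral
\[
\int_{0}^{1/(a+1)}\frac{dt}{\sqrt{1-t^{2}}}=\arcsin\!\left(\frac{1}{a+1}\right),
\]
so that the product of the two contributions is $(2/\pi)\arcsin(1/(a+1))$, as claimed. The parity-shifted case $(2n,2x+1,2n)$ is treated in exactly the same way.

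The \emph{main obstacle} is the closed-form evaluation of the determinant in~(i): because the final answer is a sum of $n$ terms with alternating signs rather than a pure product, there is no off-the-shelf determinantal identity to invoke, and one has to find the right sequence of minor expansions, row combinations, or an auxiliary $q$-deformation that eventually collapses the determinant to the single-sum form defining $Q(n,x)$. Once~(i) is secured, parts~(ii) and~(iii) reduce, respectively, to a hypergeometric identity that is routine (but not trivial) to verify and to an asymptotic estimate that is delicate but essentially standard.
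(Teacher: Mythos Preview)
The paper does not actually prove Theorem~\ref{thm:Pcentered(2n+1,2x)}; it is quoted as a known result due independently to Ciucu--Krattenthaler~\cite{CiuKrat-centered} and Gessel--Helfgott~\cite{GeHel}, and the paper's own work begins with Theorems~\ref{thm:Ncentered(2n+1,2x)} and~\ref{thm:Ncentered(2n,2x+1)}. So there is no ``paper's own proof'' to compare against here.

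That said, your outline is essentially the strategy of~\cite{CiuKrat-centered}: the lattice-path/Lindstr\"om--Gessel--Viennot encoding, a determinant evaluation by the identification-of-factors method (which is precisely what produces the single alternating sum in $Q(n,x)$ rather than a pure product), the Pfaff--Saalsch\"utz specialization for part~(ii), and Stirling-plus-integral asymptotics for part~(iii). The present paper even invokes the relevant Saalsch\"utzian evaluation from~\cite{CiuKrat-centered} in Section~\ref{sec:PfM-5}. One small slip: the two identities $Q(n+1,n)=\tfrac13$ and $Q(n,n+1)=\tfrac13$ do \emph{not} reduce to the same finite sum---the second has summation range $0\le i\le n-1$ and denominators $2n-2i-1$---though both are handled by the same hypergeometric summation. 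Your identification of the determinant evaluation as the genuine bottleneck is accurate.
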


 Generalizations of the preceding result were later obtained by Fulmek and the second author~\cite{FulKrat-I,FulKrat-II}, 
and Fisher~\cite{Ilse}. For other results on the enumeration of rhombus tilings of hexagons of which central triangles 
are removed, see e.g.~\cite{Ciu,Ciu-Zare,CiuKrat-Dual,Ther,KrattOkada}. 
Another result which is particularly relevant to our paper is the
one by Ciucu and the second author in~\cite{CiuKrat-interaction}
where for the first time the number of rhombus tilings of a half
$(a,b,a)$ hexagon with a triangular hole of size two and a free
boundary was computed. By a \textit{half $(a,b,a)$ hexagon} with a
\textit{free boundary} we mean the region, denoted in the rest of
the paper by $F_{a,b,a}$, obtained from the left  half of an
$(a,b,a)$ hexagon by regarding its boundary along~$\ell$, the
vertical symmetry axis of the hexagon, as free; i.e, rhombi in a
tiling of $F_{a,b,a}$ are allowed to protrude outward across~$\ell$
to its right. In Figure~\ref{fig:HalfTil}, the picture on the left 
shows a rhombus tiling of the half hexagon $F_{7,6,7}$, the other two show tilings of $F_{7,6,7}$ with 
triangular gaps of size one and two.
 \begin{figure}[h!]
\psset{xunit=0.5,yunit=0.5}
\begin{pspicture}(0,0)(22,15)
\rput{0}(3,0){\semipavf}
\rput{0}(13,0){\semipavb}
\rput{0}(23,0){\semipava }
\end{pspicture}
\caption{\textit{Left}: a rhombus tiling of the half hexagon $F_{7,6,7}$.
\textit{Middle}: a tiling of $F_{7,6,7}$ with a triangular hole of size two.
\textit{Right}: a tiling of $F_{7,6,7}$ with a triangular hole of size one.}\label{fig:HalfTil}
\end{figure}
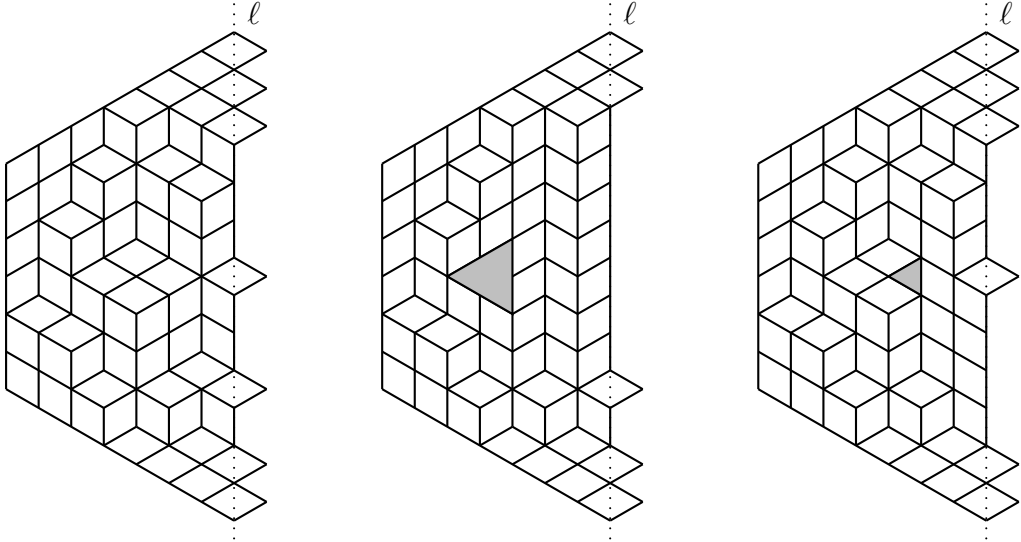

The present paper was motivated by an attempt to find similar
results to those obtained in~\cite{CiuKrat-interaction} for a
triangular gap of size one. While we didn't succeed for a general
position of the hole, we have been able to obtain a counting formula
for the number of rhombus tilings of the region $F_{a,b,a}^*$
obtained from the half hexagon $F_{a,b,a}$ by removing a triangular
hole of size one pointing to the left such that the center of its
right-side coincides with the center of the free boundary (this region is defined only if~$a$ and~$b$ have opposite parity).
The picture on the left in Figure~\ref{fig:H-F} shows a tiling of
$F_{7,6,7}^*$. As illustrated in Figure~\ref{fig:H-F}, by reflecting
the tilings of $F_{a,b,a}^*$ across the free boundary, it is easily seen that
these are equinumerous with the centered vertically symmetric
rhombus tilings of an $(a,b,a)$ hexagon.
 \begin{figure}[h!]
\begin{center}
\psset{xunit=0.5,yunit=0.5}
\begin{pspicture}(0,0)(24,15)
\rput{0}(5,0){\semipav  \psline[linestyle=dotted](0,-0.5)(0,14) \pspolygon[fillstyle=solid, fillcolor=lightgray] (0,6)(0,7)(-0.866, 6.5)}
\def\pavcomplet{ \semipav
\pnode(0,-1){V1} \pnode(0,10){V2} \symPlan(V1)(V2){\semipav}}
\rput{0}(20,0){\pavcomplet \pspolygon[fillstyle=solid, fillcolor=lightgray] (0,6)(0.866, 6.5)(0,7)(-0.866, 6.5)} \psline{<->}(7.5,6.5)(12,6.5)
\end{pspicture}
\caption{The correspondence between tilings of $F^*_{2n+1,2x,2n+1}$ and centered vertically symmetric tilings of a $(2n+1,2x,2n+1)$ hexagon.}\label{fig:H-F}
\end{center}
\end{figure}
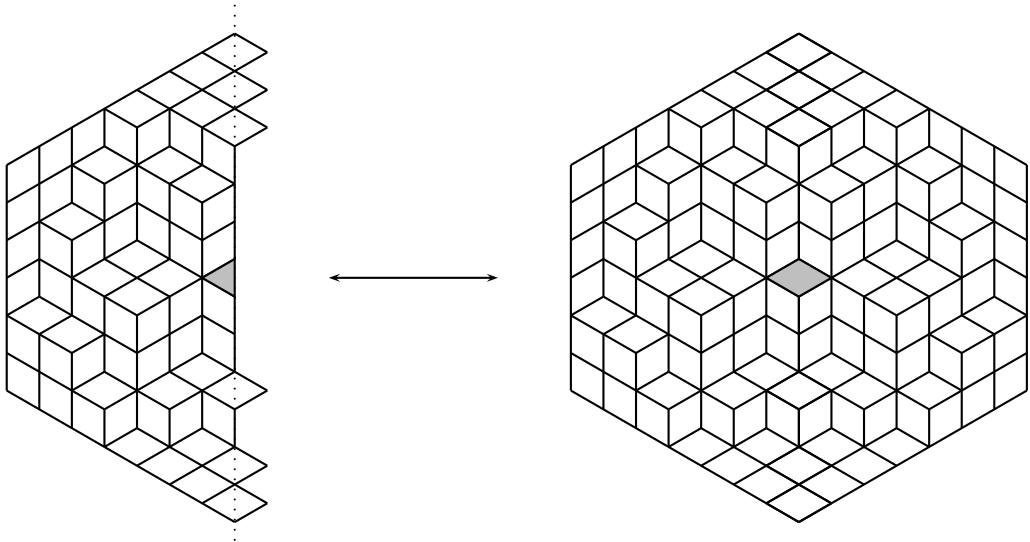
 The next two theorems, which can be seen as a ``symmetrization'' of
Theorem~\ref{thm:Pcentered(2n+1,2x)}(i), are our main results.

 \begin{thm}\label{thm:Ncentered(2n+1,2x)}
Let $n$ and $x$ be nonnegative integers. For $x\geq 1$, the number of centered
vertically symmetric rhombus tilings of a $(2n+1,2x,2n+1)$ hexagon is $Q(n+1,x)\cdot ST(2n+1,2x,2n+1)$,
where $Q$ and $ST$ are defined by~\eqref{eq:def Q(x,n)} and~\eqref{eq:def N(a,b)}.
\end{thm}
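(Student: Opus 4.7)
The plan is to reduce the enumeration to rhombus tilings of a half-hexagon with free boundary, to express the count as a Pfaffian via non-intersecting lattice paths, and finally to evaluate the Pfaffian in closed form.

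By the reflection argument illustrated in Figure~\ref{fig:H-F}, centered vertically symmetric rhombus tilings of the $(2n+1,2x,2n+1)$ hexagon are in bijection with rhombus tilings of the region $F^*_{2n+1,2x,2n+1}$. Hence Theorem~\ref{thm:Ncentered(2n+1,2x)} is equivalent to the assertion that the number of tilings of $F^*_{2n+1,2x,2n+1}$ equals $Q(n+1,x)\cdot ST(2n+1,2x,2n+1)$, and I would focus on this half-region.

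Next I would apply the standard bijection between rhombus tilings and non-intersecting families of lattice paths (as indicated by the dotted paths of the macro \verb|\pathsemipav| in the paper). In $F^*_{2n+1,2x,2n+1}$ the paths have fixed starting points on the left slanted side of the half-hexagon, and their endpoints lie freely on the vertical axis $\ell$, except that the unit triangular hole pins one path at a prescribed lattice point adjacent to the central edge of $\ell$. Because one end of each path is free, the total count of non-intersecting families is expressed as a Pfaffian via the Pfaffian version of the Lindstr\"om--Gessel--Viennot lemma due to Stembridge. The Pfaffian entries are explicit binomial sums counting pairs of paths with free endpoints, and the effect of the hole amounts to a local, rank-one modification of one row and one column.

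The remaining step, and the principal obstacle, is to evaluate this Pfaffian in closed form. My strategy would be to factor out the unperturbed Pfaffian---the one corresponding to $F_{2n+1,2x,2n+1}$ without a hole---which evaluates to $ST(2n+1,2x,2n+1)$ via the Pfaffian route to MacMahon's vertically symmetric plane-partition formula, and then to identify the correction factor as $Q(n+1,x)$. The single alternating sum in the definition of $Q(n+1,x)$ should emerge naturally from expanding the rank-one modification, each term of the expansion corresponding to which of the unperturbed paths is ``rerouted'' by the hole. To verify the exact match, I would reduce the resulting binomial expressions to a balanced hypergeometric $_3F_2$ and apply a contiguous summation identity; or, following the approach of \cite{CiuKrat-centered}, guess the closed form from small values of $n$ and $x$ and confirm it by matching both sides as polynomials in $x$. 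The fact that the correction factor turns out to be exactly the $Q(n+1,x)$ of Theorem~\ref{thm:Pcentered(2n+1,2x)}(i) is the essence of the ``symmetrization'' announced in the introduction.
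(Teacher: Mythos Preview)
Your setup through the Pfaffian is correct and matches the paper: the reflection to $F^*_{2n+1,2x,2n+1}$, the lattice-path bijection, and the Stembridge-type Pfaffian (realised here as Theorem~\ref{thm:Paths-Pfaffians} and Lemma~\ref{lem:tilings-Pfaffian_casA}) are exactly what the paper does.

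The gap is in the evaluation. Your primary strategy---factor out the ``unperturbed'' Pfaffian for $F_{2n+1,2x,2n+1}$ and treat the hole as a rank-one perturbation yielding the factor $Q(n+1,x)$---does not fit the structure of the problem. The matrix $M(x)$ in Lemma~\ref{lem:tilings-Pfaffian_casA} is not a rank-one perturbation of the no-hole matrix: removing the hole changes both the $(2n+1)\times(2n+1)$ block (the excluded endpoint $I_{n+x+1}$ must be reinstated in the $Q_{i,j}$) \emph{and} the bordering row/column, and there is no factorization $\Pf M(x)=ST(2n+1,2x,2n+1)\cdot(\text{correction})$ visible at the matrix level. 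Note also that the sum in $Q(n+1,x)$ has $n+1$ terms, not $2n+1$, so it does not arise from a Laplace-type expansion of a $(2n+2)\times(2n+2)$ Pfaffian along one row.

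The paper evaluates $\Pf M(x)$ by the identification-of-factors method, which is rather more than ``match both sides as polynomials in $x$'': one (i) bounds the $x$-degree; (ii) proves the functional equation $\Pf M(x)=(-1)^n\Pf M(-2n-1-x)$ by an explicit congruence transformation; (iii) exhibits, for each $s=1,\dots,n$, exactly $2s$ independent null row-combinations of $M(-s)$ and of $M(-s-\tfrac12)$, forcing $(x+s)^s(x+s+\tfrac12)^s\mid\Pf M(x)$; and (iv) determines the remaining degree-$2n$ cofactor by computing $\Pf M(-\sigma)$ for $\sigma=0,\dots,n$ via the Pfaffian factorization Lemma~\ref{lem:FactorizationPfaffian} together with a Mehta--Wang evaluation. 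Your fallback suggestion points in this direction, but none of the substantive steps (the row combinations in~\eqref{eq:comblineaire-s-1/2_a}--\eqref{eq:comblineaire-s-1/2_b}, the factorization lemma, the Mehta--Wang Pfaffian) is anticipated, and a single balanced $_3F_2$ will not close the argument.
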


For positive integer $n$ and nonnegative integer $x$, define
\begin{align}
\begin{split}\label{eq:def-Ux}
U_n(x)&=\sum_{i=1}^n
\big((2n-1)!!+(-1)^{i+1}(2n)!!\big)\frac{\left(\frac{3}{2}-i\right)_{2n-1}}{(i-1)!(2n-i)!}\\
&\qquad\qquad \times
\big((x+1)_{i-1}(x+i+1)_{2n-i}-(x+1)_{2n-i}(x+2n+2-i)_{i-1}\big),
\end{split}
\end{align}
and
\begin{align}\label{eq:def R(x,n)}
R(n,x)&=2^{3n-2}\frac{(2x+2)!(x+2n)!}{(n)!(x+1)!(2x+4n)!}\,U_n(x).
\end{align}
Here, as usual, $a!!$ stands for the double factorial.

\begin{thm}\label{thm:Ncentered(2n,2x+1)}
Let $n$ and $x$ be nonnegative integers. For $n\geq 1$, the number of centered
vertically symmetric rhombus tilings of a $(2n,2x+1,2n)$ hexagon is  $R(n,x)\cdot ST(2n,2x+1,2n)$,
where $R$ and $ST$ are defined by~\eqref{eq:def R(x,n)} and~\eqref{eq:def N(a,b)}.
\end{thm}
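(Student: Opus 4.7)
The plan is to adapt the approach used for Theorem~\ref{thm:Ncentered(2n+1,2x)} to the parity regime $a=2n$ even and $b=2x+1$ odd, in which the geometry of the vertical symmetry axis and of the centered hole is genuinely different from the odd case.

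The first step is to apply the reflection bijection of Figure~\ref{fig:H-F} to replace the centered vertically symmetric tilings of the $(2n,2x+1,2n)$ hexagon by rhombus tilings of $F^{*}_{2n,2x+1,2n}$, the left half of the hexagon with free boundary along the symmetry axis~$\ell$ together with a small left-pointing triangular hole whose right side is centered on~$\ell$. This converts a global symmetry constraint into an enumeration problem for a region with only local features.

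I then encode these tilings as families of non-intersecting lattice paths via the standard bijection. Because the right boundary is free, one endpoint of each path is fixed on the slanted left boundary while the other endpoint may sit anywhere along~$\ell$; the triangular hole corresponds to a blocked vertex. By Stembridge's Pfaffian version of the Lindstr\"om--Gessel--Viennot lemma, the signed enumeration of such free-endpoint path systems is a Pfaffian whose entries are differences of products of binomial coefficients.

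The central step is to evaluate this Pfaffian and match it against $R(n,x)\cdot ST(2n,2x+1,2n)$. I would factor out the Pfaffian that enumerates tilings of the hole-free half hexagon, which by MacMahon's symmetric hexagon theorem equals $ST(2n,2x+1,2n)$ (cf.~\cite{Kratt,Ilse2}). Expanding the remaining Pfaffian along the row indexed by the path interacting with the hole yields a single-index sum of the shape~\eqref{eq:def-Ux}: the difference $(x+1)_{i-1}(x+i+1)_{2n-i}-(x+1)_{2n-i}(x+2n+2-i)_{i-1}$ arises from paths detouring to the two sides of the hole, and the parity-dependent combination $(2n-1)!!+(-1)^{i+1}(2n)!!$ arises because, when $a=2n$ is even, there are two essentially different detours around the hole which the Pfaffian sign rule weights oppositely; the factor $\left(\tfrac{3}{2}-i\right)_{2n-1}/(i-1)!(2n-i)!$ encodes the central-binomial count of completions of the surrounding paths.

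The hard part is the Pfaffian evaluation itself. In the odd-$a$ case of Theorem~\ref{thm:Ncentered(2n+1,2x)}, the central lozenge lies astride the symmetry axis, which triangularizes the Pfaffian in a clean way; here the axis passes through lattice vertices, so the entries do not telescope and one must combine row-column operations with hypergeometric manipulations of the resulting sums. An alternative, common in this area, is to observe that after clearing denominators both sides of the claimed identity are polynomials in~$x$ of predictable degree for each fixed~$n$, and to verify the identity by matching leading coefficients together with a sufficient number of small values of~$x$.
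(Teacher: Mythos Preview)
Your first three steps --- the reflection bijection to $F^{*}_{2n,2x+1,2n}$, the translation to non-intersecting lattice paths with one forced endpoint, and the passage to a Pfaffian via Stembridge's theorem --- are exactly what the paper does (Section~2, Lemma~\ref{lem:tilings-Pfaffian_casB}). From there, however, your proposal diverges from the paper and has a genuine gap.

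The central claim that one can ``factor out the Pfaffian that enumerates tilings of the hole-free half hexagon'' and then obtain $U_n(x)$ by expanding the residual Pfaffian along a single row is not substantiated and, as stated, does not work. The matrix $N(x)$ of Lemma~\ref{lem:tilings-Pfaffian_casB} differs from the hole-free matrix in a full row and column, not by a rank-one perturbation of a block, so there is no algebraic factorization $\Pf N(x)=ST(2n,2x+1,2n)\cdot(\text{something})$ available a priori. Your heuristic reading of the individual pieces of $U_n(x)$ (the Pochhammer difference as ``two detours,'' the double-factorial combination as a Pfaffian sign, the shifted factorial as a ``central-binomial count'') is suggestive but is not a proof; in the paper the shape of $U_n(x)$ emerges only at the very end, after a long computation, and no direct combinatorial derivation of it is given.

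What the paper actually does (Section~4) is the identification-of-factors method. One first shows $\Pf N(x)$ is a polynomial in $x$ of degree at most $2n^2+n-1$ and establishes the functional equation $\Pf N(x)=(-1)^{n+1}\Pf N(-2n-1-x)$ by an explicit row/column transformation. One then proves, by exhibiting enough independent vectors in the kernel of $N(x)$ at $x=-s$ and $x=-s-\tfrac{3}{2}$, that $\prod_{s=1}^n(x+s)^s\prod_{s=1}^{n-1}(x+s+\tfrac{3}{2})^s$ divides $\Pf N(x)$. Together with the symmetry this forces $\Pf N(x)=T(x)\,(x+1)_{2n}\prod_{s=2}^n(2x+2s)_{4n-4s+3}$ with $\deg T\le 2n-2$, and the product here is precisely (a constant times) $ST(2n,2x+1,2n)$. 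Only then does one identify $T(x)$ with the polynomial $L(x)$ built from $U_n(x)$, by checking the values at $x=-1,\dots,-n$ (and invoking the symmetry to double the number of checked points).

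Your ``alternative'' in the last paragraph is therefore closer to the truth than your main line, but it badly understates the difficulty. Checking $T(-\sigma)=L(-\sigma)$ is the hardest part of the whole argument: one must first perform row/column operations so that Lemma~\ref{lem:FactorizationPfaffian} applies, yielding $\big((x+\sigma)^{-\sigma}\Pf N(x)\big)\big|_{x=-\sigma}=\Pf\widetilde B\cdot\Pf D$; the factor $\Pf\widetilde B$ is easy, but $\Pf D$ is a $2\sigma\times2\sigma$ Pfaffian that agrees with a Mehta--Wang matrix except in its last row and column. Its evaluation (Lemma~\ref{lem:Pf(S)_N}) occupies all of Section~5 and requires a new formula for such perturbed Mehta--Wang Pfaffians (Proposition~\ref{prop:PerturbedMehtaWang}) together with several multisum identities proved by contiguous relations, Watson's and Pfaff--Saalsch\"utz summations, and Gosper--Zeilberger recurrences. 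None of this is captured by ``matching leading coefficients together with a sufficient number of small values of~$x$.''
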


 The next result, which is rather striking and deserves further  investigation, is immediate 
 from Theorems~\ref{thm:Ncentered(2n+1,2x)} and~\ref{thm:Pcentered(2n+1,2x)}.

\begin{cor}\label{cor:Pcentered(2n+1,2x)-Ncentered(2n+1,2x)}
Let $n$ and $x$ be  nonnegative integers. The probability that a
random vertically symmetric rhombus tiling of a $(2n+1,2x,2n+1)$
hexagon is centered is exactly the same as the probability that a
random rhombus tiling of a $(2n+1,2x,2n+1)$ hexagon is centered.
\end{cor}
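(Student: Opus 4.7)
The plan is to carry out a direct division of the two enumeration formulas; no further combinatorics is required since the statement is purely a comparison of two ratios. The first step is to express each of the two probabilities as a single symbolic factor.

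By Theorem~\ref{thm:Pcentered(2n+1,2x)}(i), the number of centered rhombus tilings of a $(2n+1,2x,2n+1)$ hexagon equals $Q(n+1,x)\cdot T(2n+1,2x,2n+1)$. Since $T(2n+1,2x,2n+1)$ is the total number of rhombus tilings of such a hexagon by MacMahon's formula \eqref{eq:def P(a,b)}, dividing gives that the probability that a uniformly random rhombus tiling of a $(2n+1,2x,2n+1)$ hexagon is centered equals exactly $Q(n+1,x)$. The second step is the parallel computation for the symmetric case: Theorem~\ref{thm:Ncentered(2n+1,2x)} states that the number of centered \emph{vertically symmetric} rhombus tilings equals $Q(n+1,x)\cdot ST(2n+1,2x,2n+1)$, and $ST(2n+1,2x,2n+1)$ is the total number of such symmetric tilings by \eqref{eq:def N(a,b)}. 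Hence the probability that a uniformly random vertically symmetric rhombus tiling of a $(2n+1,2x,2n+1)$ hexagon is centered also equals $Q(n+1,x)$.

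Comparing the two factors yields the corollary. There is no real obstacle at the level of this deduction; the entire content of the corollary is the fortunate fact that the very same function $Q(n+1,x)$ appears as the proportionality constant in both Theorem~\ref{thm:Pcentered(2n+1,2x)}(i) and Theorem~\ref{thm:Ncentered(2n+1,2x)}. The genuinely hard work is hidden in the proof of Theorem~\ref{thm:Ncentered(2n+1,2x)}, which must produce exactly this $Q$-factor rather than some other expression equivalent only after simplification. As the authors note, the conceptual explanation for this coincidence (for instance, a bijective or probabilistic reason why conditioning on vertical symmetry does not alter the probability of containing the central lozenge) is left as a problem for further investigation and is not needed here.
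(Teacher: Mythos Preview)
Your proof is correct and matches the paper's approach exactly: the paper simply states that the corollary is ``immediate from Theorems~\ref{thm:Ncentered(2n+1,2x)} and~\ref{thm:Pcentered(2n+1,2x)}'', and you have spelled out precisely this division argument. The only minor point is that both theorems are stated for $x\ge 1$, while the corollary allows $x=0$; but in that degenerate case the unique tiling of a $(2n+1,0,2n+1)$ hexagon is trivially both centered and vertically symmetric, so both probabilities equal~$1$.
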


 We should note here that the preceding result leads to an interesting (and intriguing) factorization
 for the number of all centered tilings of a hexagon. Given a planar region $R$ symmetric with respect to a
vertical axis and to a horizontal axis, let $\T(R)$ be the set of
all rhombus tilings of $R$. We also let $\T^{(|)}(R)$ (resp., $\T^{(-)}(R)$)
be the set of the tilings in $\T(R)$ that are vertically symmetric
(resp., horizontally symmetric). Let $H_{a,2b,a}$ be an $(a,2b,a)$
hexagon. Ciucu and the second author~\cite{CiuKrat-factor} observed the factorization 
\begin{align}\label{eq:FactorizationTil}
\# \T(H_{a,2b,a})&= \# \T^{(|)}(H_{a,2b,a}) \cdot \# \T^{(-)}(H_{a,2b,a}).
\end{align}
This can be proved by combining~\eqref{eq:def P(a,b)} and~\eqref{eq:def N(a,b)} with a formula of Proctor~\cite{Proct2} for $\# \T^{(-)}(H_{a,2b,a})$ 
(equivalently, the number of transpose complementary plane partitions in a $2b\times a\times a$ box); see also~\cite{CiuKrat-factor} where the above relation 
was put in a more general context. 
 Now, suppose that $a$ is odd and let $H^*_{a,2b,a}$ denote the region obtained  by
removing the central rhombus in $H_{a,2b,a}$. Recall that a centered tiling of $H_{a,2b,a}$ is obviously 
equivalent to a tiling of $H^*_{a,2b,a}$. Then, combining Corollary~\ref{cor:Pcentered(2n+1,2x)-Ncentered(2n+1,2x)} with~\eqref{eq:FactorizationTil} 
(and noting that any horizontally symmetric tiling of $H_{a,2b,a}$ is centered), we arrive at
the following factorization for the number of centered tilings.

\begin{cor}\label{cor:FactorizationCentTil}
For any nonnegative integers $n$ and $x$, we have
\begin{align}\label{eq:FactorizationCentTil}
\# \T(H^*_{2n+1,2x,2n+1})&= \# \T^{(|)}(H^*_{2n+1,2x,2n+1}) \cdot \#
\T^{(-)}(H^*_{2n+1,2x,2n+1}).
\end{align}
\end{cor}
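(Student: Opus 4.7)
The plan is to follow the three-ingredient recipe already foreshadowed in the discussion preceding the corollary: the ``probability coincidence'' of Corollary~\ref{cor:Pcentered(2n+1,2x)-Ncentered(2n+1,2x)}, the factorization~\eqref{eq:FactorizationTil}, and the geometric observation that horizontally symmetric rhombus tilings of $H_{2n+1,2x,2n+1}$ are automatically centered. I would begin with this geometric observation, as it is the only step that is not pure algebra. Because $a=2n+1$ is odd and $2b=2x$ is even, the central lozenge of $H_{2n+1,2x,2n+1}$ exists; by the way the hexagon is drawn, it is the unique horizontal (``flat'') rhombus whose two $120^\circ$ vertices lie on the horizontal symmetry axis. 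In any horizontally symmetric tiling, the rhombus containing the center must itself be invariant under reflection across the horizontal axis, and among the three orientations of unit rhombi only the horizontal ones have this invariance. Hence every tiling in $\T^{(-)}(H_{2n+1,2x,2n+1})$ contains the central rhombus, and deleting it furnishes a bijection
\begin{equation*}
\T^{(-)}(H_{2n+1,2x,2n+1}) \longleftrightarrow \T^{(-)}(H^*_{2n+1,2x,2n+1}).
\end{equation*}

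Next, I would cross-multiply the equality of probabilities given by Corollary~\ref{cor:Pcentered(2n+1,2x)-Ncentered(2n+1,2x)} to obtain
\begin{equation*}
\# \T(H^*_{2n+1,2x,2n+1}) \cdot \# \T^{(|)}(H_{2n+1,2x,2n+1}) = \# \T^{(|)}(H^*_{2n+1,2x,2n+1}) \cdot \# \T(H_{2n+1,2x,2n+1}).
\end{equation*}
Substituting the factorization~\eqref{eq:FactorizationTil} for $\# \T(H_{2n+1,2x,2n+1})$ on the right-hand side and cancelling the common factor $\# \T^{(|)}(H_{2n+1,2x,2n+1})$ yields
\begin{equation*}
\# \T(H^*_{2n+1,2x,2n+1}) = \# \T^{(|)}(H^*_{2n+1,2x,2n+1}) \cdot \# \T^{(-)}(H_{2n+1,2x,2n+1}),
\end{equation*}
and the bijection from the previous step then lets me replace the final factor by $\# \T^{(-)}(H^*_{2n+1,2x,2n+1})$, giving~\eqref{eq:FactorizationCentTil}.

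Since all three ingredients are already in hand, I do not anticipate a genuine obstacle; the only point demanding real attention is making the geometric observation rigorous, in particular verifying that the parity hypothesis ($a$ odd, $2b$ even) is precisely what forces the horizontal reflection to have exactly one fixed unit rhombus, located at the center of $H_{2n+1,2x,2n+1}$.
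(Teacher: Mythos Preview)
Your proof is correct and follows exactly the approach sketched in the paper: combine Corollary~\ref{cor:Pcentered(2n+1,2x)-Ncentered(2n+1,2x)} with the factorization~\eqref{eq:FactorizationTil} and the observation that every horizontally symmetric tiling of $H_{2n+1,2x,2n+1}$ is centered. You have merely made the algebra of that combination explicit, including the cancellation of the nonzero factor $\#\T^{(|)}(H_{2n+1,2x,2n+1})$.
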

Note that the above factorization is very similar to the one in~\eqref{eq:FactorizationTil}.
Another result which is immediate from Corollary~\ref{cor:Pcentered(2n+1,2x)-Ncentered(2n+1,2x)}
and Theorem~\ref{thm:Pcentered(2n+1,2x)} is the following.

\begin{cor}\label{cor:Ncentered(2n+1,2x)}
$(i)$ For $n\geq 1$, the probability that a random vertically symmetric rhombus tiling of
a $(2n+1,2n,2n+1)$ hexagon is centered is $1/3$.

$(ii)$ Let $a$ be a nonnegative real number. For $x\sim an$, the
probability that a random vertically symmetric rhombus tiling of
a $(2n+1,2x,2n+1)$ hexagon is centered  is $\sim (2/\pi) \arcsin(1/(a+1))$, as $n$ tends to infinity.
\end{cor}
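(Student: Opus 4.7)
My plan is to deduce the corollary directly by combining the exact probabilistic equivalence provided by Corollary~\ref{cor:Pcentered(2n+1,2x)-Ncentered(2n+1,2x)} with the known centering values and asymptotics for unrestricted tilings recorded in Theorem~\ref{thm:Pcentered(2n+1,2x)}(ii)--(iii). Corollary~\ref{cor:Pcentered(2n+1,2x)-Ncentered(2n+1,2x)} asserts that, for every admissible pair $(n,x)$, the fraction of vertically symmetric rhombus tilings of the $(2n+1,2x,2n+1)$ hexagon containing the central lozenge coincides with the fraction of \emph{all} rhombus tilings of that same hexagon containing the central lozenge. Consequently, any exact value or limit for the latter ratio transfers verbatim to the former.

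For part (i) I would specialize to the balanced case $x=n$: Theorem~\ref{thm:Pcentered(2n+1,2x)}(ii) states that exactly one third of the rhombus tilings of a $(2n+1,2n,2n+1)$ hexagon are centered, and the transfer above then yields the same proportion within the vertically symmetric subclass. For part (ii) I would take $x\sim a n$ with $a\geq 0$ fixed and invoke Theorem~\ref{thm:Pcentered(2n+1,2x)}(iii), which provides the asymptotic $(2/\pi)\arcsin(1/(a+1))$ for the unconstrained centering probability; the same transfer delivers the identical limit for the vertically symmetric probability.

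No substantive obstacle arises in this deduction, because Corollary~\ref{cor:Pcentered(2n+1,2x)-Ncentered(2n+1,2x)} is an exact equality of probabilities rather than merely an asymptotic one, so both the precise value $1/3$ of (i) and the limiting arcsine expression of (ii) carry over without any additional error analysis. The entire argument is thus a short deduction from Corollary~\ref{cor:Pcentered(2n+1,2x)-Ncentered(2n+1,2x)} together with the two relevant parts of Theorem~\ref{thm:Pcentered(2n+1,2x)}.
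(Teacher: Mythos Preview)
Your proposal is correct and matches the paper's own reasoning exactly: the paper states that Corollary~\ref{cor:Ncentered(2n+1,2x)} ``is immediate from Corollary~\ref{cor:Pcentered(2n+1,2x)-Ncentered(2n+1,2x)} and Theorem~\ref{thm:Pcentered(2n+1,2x)},'' which is precisely the deduction you describe.
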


As one can expect, the second part of the preceding proposition is still valid
for a $(2n,2x+1,2n)$ hexagon.
\begin{cor}\label{cor:Ncentered(2n,2x+1)}
 Let $a$ be a nonnegative real number.  Then, for $x\sim an$, the probability
that a random vertically symmetric rhombus tiling of a $(2n,2x+1,2n)$ hexagon is centered is $\sim (2/\pi)
\arcsin(1/(a+1))$, as $n$ tends to infinity.
\end{cor}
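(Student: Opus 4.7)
The plan is as follows. By Theorem~\ref{thm:Ncentered(2n,2x+1)} the probability that a random vertically symmetric rhombus tiling of a $(2n,2x+1,2n)$ hexagon is centered equals $R(n,x)$ exactly, so the corollary reduces to establishing the asymptotic
$$R(n,x) \sim \frac{2}{\pi}\arcsin\!\left(\frac{1}{a+1}\right),\qquad n\to\infty,\ x\sim an.$$
Since Theorem~\ref{thm:Pcentered(2n+1,2x)}(iii) already supplies exactly this limit for the analogous quantity $Q(n,x+1)$ (the probability of being centered in the unrestricted ensemble), the cleanest route is a direct Stirling-type analysis of $R(n,x)$, parallel to the analysis that must underlie Theorem~\ref{thm:Pcentered(2n+1,2x)}(iii).

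The first step is to simplify the bracketed factor in the definition~\eqref{eq:def-Ux} of $U_n(x)$. Using the telescopings
$$(x+1)_{i-1}(x+i+1)_{2n-i}=\frac{(x+1)_{2n}}{x+i},\qquad (x+1)_{2n-i}(x+2n+2-i)_{i-1}=\frac{(x+1)_{2n}}{x+2n+1-i},$$
the difference collapses to $(x+1)_{2n}\cdot(2n+1-2i)/((x+i)(x+2n+1-i))$. Pulling the $x$-independent factor $(x+1)_{2n}$ out of the sum converts $U_n(x)$ into a single sum in which $x$ appears only through the rational factor $(2n+1-2i)/((x+i)(x+2n+1-i))$. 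This is the decisive simplification that makes the asymptotic analysis tractable.

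Next, set $x=an+O(1)$ and $i=\lfloor tn\rfloor$ with $t\in(0,1)$, and apply Stirling's formula to the remaining factorials, double factorials, and to the shifted Pochhammer $(3/2-i)_{2n-1}$. The dominant contribution to $U_n(x)$ comes from the non-alternating piece (with coefficient $(2n-1)!!$); after combining with the prefactor $2^{3n-2}(2x+2)!(x+2n)!/(n!(x+1)!(2x+4n)!)$ and the extracted $(x+1)_{2n}$, the sum becomes a Riemann sum in $t\in(0,1)$ converging to an integral that evaluates, after a standard trigonometric substitution, to $(2/\pi)\arcsin(1/(a+1))$.

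The main obstacle is controlling the alternating sub-sum with coefficient $(-1)^{i+1}(2n)!!$: its individual terms are of the same order as those in the non-alternating piece, so a naive absolute estimate cannot work. The cleanest remedy is to pair consecutive summands $i,i+1$ and exploit the smoothness of the rational factor $(2n+1-2i)/((x+i)(x+2n+1-i))$ as a function of $i/n$ to extract a gain of order $1/n$ per pair, which collapses the alternating contribution to a lower-order correction. As a cross-check one can carry out the identical Stirling analysis on $Q(n,x+1)$ from formula~\eqref{eq:def Q(x,n)} and verify that the very same limiting integral emerges; this directly yields $R(n,x)-Q(n,x+1)=o(1)$ and hence the corollary, without having to evaluate the integral in closed form.
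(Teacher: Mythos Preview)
Your initial simplification of $U_n(x)$ is correct and genuinely helpful: the identity
\[
(x+1)_{i-1}(x+i+1)_{2n-i}-(x+1)_{2n-i}(x+2n+2-i)_{i-1}
=(x+1)_{2n}\,\frac{2n+1-2i}{(x+i)(x+2n+1-i)}
\]
is a clean way to isolate the $x$-dependence, and the paper does not exploit it in this form.

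However, the core of your argument rests on a sign error. The factor $\bigl(\tfrac{3}{2}-i\bigr)_{2n-1}$ is \emph{not} positive: for $1\le i\le n$ it has exactly $i-1$ negative factors, so its sign is $(-1)^{i-1}$. Consequently the roles of your two pieces are reversed. The sub-sum with coefficient $(2n-1)!!$ has terms of sign $(-1)^{i-1}$ and is therefore the genuinely alternating one, while the sub-sum with coefficient $(-1)^{i+1}(2n)!!$ has terms that are all positive (the $(-1)^{i+1}$ cancels the $(-1)^{i-1}$). Since moreover $(2n)!!/(2n-1)!!\sim\sqrt{\pi n}$, it is the $(2n)!!$ piece that dominates, not the $(2n-1)!!$ piece. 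Your Riemann-sum analysis is thus applied to the wrong sum, and the ``main obstacle'' you identify dissolves: once the roles are swapped, the alternating $(2n-1)!!$ piece is already smaller in absolute value by a factor $\sim 1/\sqrt{\pi n}$ than the dominant piece, so a crude absolute bound suffices and no pairing argument is needed.

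Even after this correction your write-up remains a sketch: the Stirling asymptotics for the combined prefactor times $(2n)!!\,|(3/2-i)_{2n-1}|/((i-1)!(2n-i)!)$ are not worked out, the limiting integral is not written down, the endpoint behaviour near $i=1$ and $i=n$ (where the factor $1/\sqrt{i(2n-i)}$ blows up) is not controlled, and the trigonometric evaluation is asserted rather than shown. The ``cross-check'' via $Q(n,x+1)$ does not fill this gap, since it would require carrying out the parallel Stirling analysis you have not done for either quantity. For comparison, the paper proceeds by rewriting $R(n,x)$ as a combination of four ${}_4F_3$-series at $z=\pm1$, proves a separate asymptotic lemma for these series via a convexity-plus-integral argument, and then combines the two surviving $z=1$ series into $\tfrac{2}{\pi}\arcsin\!\bigl(\tfrac{1}{a+1}\bigr)$; the $z=-1$ series (which correspond to your $(2n-1)!!$ piece) are discarded using exactly the $1/\sqrt{\pi n}$ factor noted above.
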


The rest of this paper is devoted to the proof of the above corollary and Theorems~\ref{thm:Ncentered(2n+1,2x)}
 and~\ref{thm:Ncentered(2n,2x+1)}. As in many previous papers, our approach to proving
 Theorems~\ref{thm:Ncentered(2n+1,2x)}  and~\ref{thm:Ncentered(2n,2x+1)} is to first translate the centered vertically symmetric rhombus
 tilings into  families of non-intersecting lattice paths. Then, to enumerate these non-intersecting
 lattice paths,  we use a slight extension of a theorem of Stembridge~\cite{Stem} to obtain Pfaffians for the numbers
 we are interested in.  This is the subject of Section 2. The evaluation of these Pfaffians are presented in
 Section~3 and~4 with some auxiliary results proved in Section~5. It is based on the ``exhaustion/identification
 of factors'' method (e.g., see~\cite[Sect.~2.4]{Krattt}) and turns out to be particularly demanding. In particular, we need a
 Pfaffian factorization due to Ciucu and the second  author, an evaluation of a perturbed Mehta-Wang Pfaffian, and evaluations of
 very intricate combinatorial sums.   In the final section, Section 6, we perform the asymptotic calculation needed
  to derive Corollary~\ref{cor:Ncentered(2n,2x+1)} from Theorem~\ref{thm:Ncentered(2n,2x+1)}.

%
%
\section{Centered vertically symmetric tilings, nonintersecting lattice paths and Pfaffians}

\subsection{Centered vertically symmetric tilings and nonintersecting lattice paths}\label{sect:tilings-paths}
As explained in the introduction (see Figure~\ref{fig:H-F}), centered vertically symmetric
tilings of an $(a,b,a)$ hexagon can be seen as tilings of the region $F^*_{a,b,a}$. Throughout this section, the term \textit{lattice path} will always
refer to a path in the lattice $\mathbb{Z}^2$ consisting of unit horizontal and vertical steps in the positive direction.
 There is a well known bijection between rhombus tilings of lattice regions
and families of non-intersecting lattice paths. An illustration in
our situation is given in Figure~\ref{fig:tilings-paths}.
 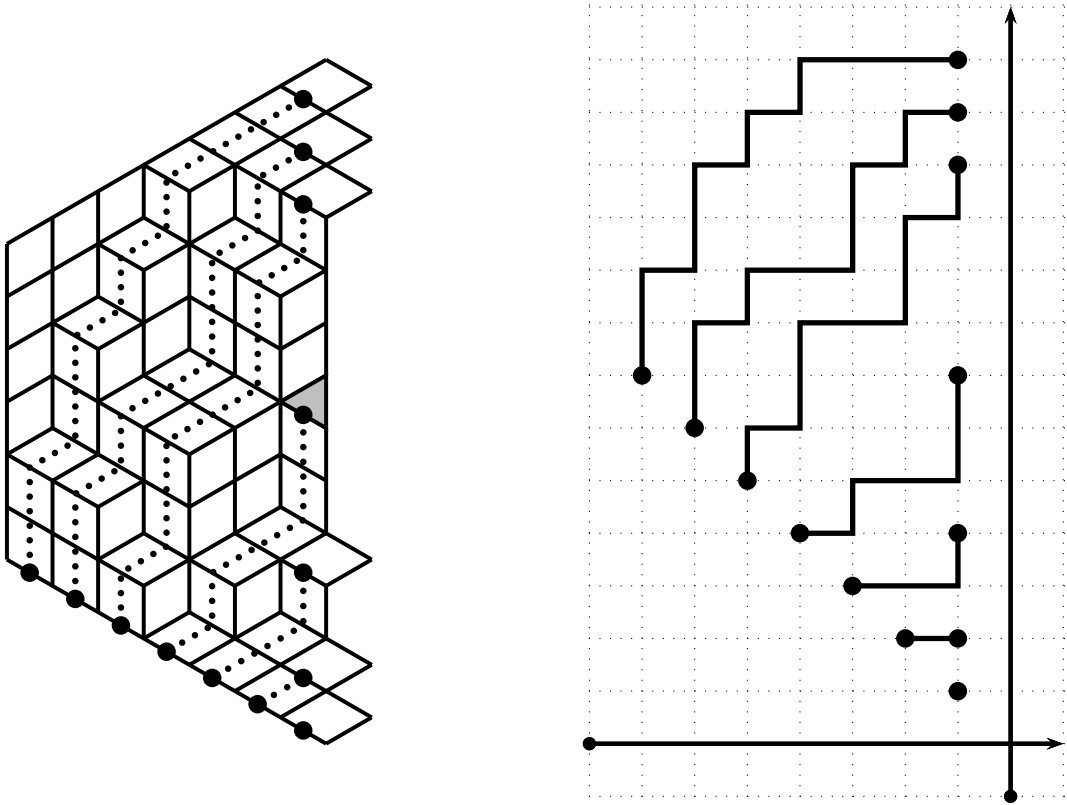
\begin{figure}[h!]
\begin{center}
\psset{xunit=0.7,yunit=0.7}
\begin{pspicture}(0,0)(20,17)
\psset{linewidth=1.5pt,griddots=10}
 \rput{0}(6,3){\pathsemipav}
\rput{0}(19,3){\path}
\end{pspicture}
\end{center}
\caption{The correspondence between tilings of $F^*_{n,x,n}$ and
 non-intersecting lattice paths.}\label{fig:tilings-paths}
\end{figure}
 By this bijection, tilings of $F^*_{2n+1,2x,2n+1}$ are seen to be equinumerous with families $(P_1, P_2, \ldots, P_{2n+1})$ of
non-intersecting lattice paths, where for $i = 1, 2,\ldots , 2n+1$, $P_i$ runs from~$(-i,i)$ to some point from the set $I=\{(-1,j):\,
1\leq j\leq 2x+2n+1\}$ with the additional condition that $(-1,x+n+1)$ must be an ending point of some path (we should note that empty path,
i.e., with no steps, can occur as in Figure~\ref{fig:tilings-paths} where the bottommost path is the empty path from $(-1,1)$ to $(-1,1)$).
Similarly, tilings of $F^*_{2n,2x+1,2n}$ are equinumerous with families $(P_1, P_2, \ldots, P_{2n})$ of non-intersecting lattice
paths, where $P_i$ runs from $(-i, i)$ to some point from the set $I$ with the same additional condition.
To enumerate these non-intersecting lattice paths, we shall use a slight extension of
a theorem of Stembridge~\cite[Theorem 3.2]{Stem}.

\subsection{Nonintersecting lattice paths and Pfaffians}\label{sect:paths-Pfaffian}
We first set up some terminology.  The \textit{Pfaffian} of a
skew-symmetric matrix $A$ will be denoted $\Pf A $. It is well-known
(see e.g.~\cite[Proposition~2.2]{Stem}) that
\begin{align}\label{eq:pf-det}
 (\Pf A)^2=\det A.
\end{align}
The \textit{signature} of a permutation $\pi\in\mathfrak{S}_n$,
where $\mathfrak{S}_n$ is the symmetric group  of degree $n$, will
be denoted $\sgn\,\pi$. Recall that
\begin{align}\label{eq:sign}
\sgn\,\pi=(-1)^{\inv\, \pi},
\end{align}
where $\inv\, \pi$ is the number of \textit{inversions} of $\pi$; i.e.,
the number of pairs $1\leq i<j\leq n$ such that $\pi(i)>\pi(j)$.

Given a weight-function $w$ that assigns values in the field of
complex numbers $\mathbb{C}$ to each edge of the integer lattice
$\mathbb{Z}^2$, we extend $w$ multiplicatively to multisets of
edges, so that $w(M)=\prod_{e\in M} w(e)$ for any such multiset $M$.
 The weight of a lattice path or $r$-tuple of lattice paths
is defined as the weight of the underlying multiset of edges. Given
any family $\mathcal{F}$ of edge multisets, we will write
$GF[\mathcal{F}]$ for the generating function according to the
weight $w$;
 i.e., $GF[\mathcal{F}]=\sum_{M\in \mathcal{F}} w(M)$. In particular, let us define
\begin{align}\label{eq:def h(A,B)}
h(A,B)=GF[\mathcal{P}(A\rightarrow B)]=\sum_{P\in
\mathcal{P}(A\rightarrow B)} w(P),
\end{align}
where $\mathcal{P}(A\rightarrow B)$ stands for the set of lattice paths
from $A$ to $B$.

\begin{thm}\label{thm:Paths-Pfaffians}
Let $\Acal=(A_1,A_2,\ldots ,A_p)$, $\Scal=(S_1, S_2, \ldots , S_q)$ and $\Ical=
(I_1, I_2,\ldots)$ be finite lists of points in the lattice $\mathbb{Z}^2$, with $p+q$ even.
For $\pi\in \mathfrak{S}_p$, let $\mathcal{P}^{nonint} (\Acal_{\pi}\rightarrow (\Scal,\Ical))$ denote the set of
families $(P_1, P_2,\ldots, P_p)$ of non-intersecting lattice paths with $P_k$ running from
$A_{\pi(k)}$ to $S_k$, for $k = 1, 2, \ldots, q$, and to $I_{j_k}$,
for $k = q + 1, q + 2,\ldots, p$, the indices being required to
satisfy $j_{q+1} < j_{q+2} <\ldots < j_p$. Then
\begin{align}\label{eq:Pfaffian-nonintersecting}
\Pf\left(
    \begin{array}{cc}
      Q & H \\
      -H^t & 0 \\
    \end{array}
  \right)&=(-1)^{{q\choose 2}} \sum_{\pi\in \mathfrak{S}_p} ({\rm sgn}\,
\pi)\; GF[\mathcal{P}^{nonint} (\Acal_{\pi}\rightarrow (\Scal,\Ical))],
\end{align}
where the matrix $Q = (Q_{i,j})_{1\leq i,j\leq p}$ is defined by
\begin{align}\label{eq:defQij}
Q_{i,j}=\sum_{1\leq s<t} h(A_i,I_s)\, h(A_j,I_t) - h(A_j,I_s)\,
h(A_i,I_t),
\end{align}
and the matrix $H=(H_{i,j})_{1\leq i\leq p, 1\leq j\leq q}$ by
\begin{align}\label{eq:defHij}
H_{i,j}& = h(A_i,S_j),
\end{align}
with $h(A,B)$ defined by~\eqref{eq:def h(A,B)}.
\end{thm}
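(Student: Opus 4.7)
The plan is to prove the identity by expanding the Pfaffian directly and then applying a sign-reversing involution in the spirit of Lindstr\"om and Gessel-Viennot, with the main novelty over Stembridge's original result~\cite{Stem} (which corresponds to the case $q=0$) being the block structure coming from the fixed endpoints $\Scal$. Recall that the Pfaffian of a skew-symmetric matrix $M$ of order $p+q$ is by definition a signed sum, over perfect matchings $\mu=\{\{a_1,b_1\},\ldots,\{a_{(p+q)/2},b_{(p+q)/2}\}\}$ of $\{1,\ldots,p+q\}$, of $\sgn(\mu)\prod_\ell M_{a_\ell,b_\ell}$. Because the bottom-right $q\times q$ block of the displayed matrix vanishes, the only matchings that contribute are those in which every index in $\{p+1,\ldots,p+q\}$ is paired with an index in $\{1,\ldots,p\}$; the remaining $p-q$ indices in $\{1,\ldots,p\}$ must then be paired among themselves. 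Such a matching thus encodes a choice of $q$ distinguished indices $i_1,\ldots,i_q\in\{1,\ldots,p\}$ together with a bijection $\sigma:\{1,\ldots,q\}\to\{i_1,\ldots,i_q\}$ (indicating which $A$-point is routed to $S_k$), and a perfect matching of the remaining $p-q$ indices.

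Next I would expand each factor $Q_{a,b}$ according to~\eqref{eq:defQij}. Each such factor is a $2\times 2$ determinant $h(A_a,I_s)h(A_b,I_t)-h(A_b,I_s)h(A_a,I_t)$ summed over $s<t$, which is precisely the Lindstr\"om-Gessel-Viennot weighted signed enumeration of pairs of paths with ordered endpoints in $\Ical$. After this expansion, the Pfaffian becomes a signed weighted enumeration of tuples of paths $(P_1,\ldots,P_p)$ in which $q$ paths end at points of $\Scal$ (in the prescribed bijection) and the remaining $p-q$ end at distinct points of $\Ical$, the latter being naturally grouped into ordered pairs by the matching. A careful bookkeeping shows that the cumulative sign of each tuple is $(-1)^{\binom{q}{2}}\sgn\pi$, where $\pi\in\mathfrak{S}_p$ is the permutation assigning the starting point $A_{\pi(k)}$ to the $k$-th ending point listed in the canonical order $S_1,\ldots,S_q,I_{j_{q+1}},\ldots,I_{j_p}$ with $j_{q+1}<\cdots<j_p$.

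The third step is the standard Gessel-Viennot-Stembridge sign-reversing involution: fix a total order on lattice points that refines the orderings of both $\Scal$ and $\Ical$; on any family of paths in which at least two paths share a vertex, locate the smallest such crossing point and the two smallest-indexed paths meeting there, and swap their tails at that vertex. This involution is weight-preserving, sign-reversing, and preserves the type of endpoint of each path (the swap never sends a path bound for $\Scal$ to $\Ical$ or vice versa, because within each ``slot'' of the matching the endpoint type is fixed). Hence all intersecting configurations cancel, and only non-intersecting families remain. Multiplying both sides by $(-1)^{\binom{q}{2}}$ and collecting terms then gives exactly~\eqref{eq:Pfaffian-nonintersecting}.

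The main obstacle will be the sign bookkeeping in the first two steps: one must reconcile the sign of the Pfaffian matching (defined via a canonical reordering into pairs), the sign introduced by the ordering $s<t$ inside each $Q$-entry, the sign of the bijection $\sigma$, and the inv statistic of the eventual permutation $\pi$. It is essentially this bookkeeping, rather than any new combinatorial content, that makes the statement a ``slight extension'' of Stembridge. A clean alternative, which I would also consider as a sanity check, is to reduce directly to Stembridge's original theorem by adjoining $q$ fictitious starting-independent slots to $\Ical$ with weights designed so that only the paths ending at the corresponding $S_k$ survive in those slots; this converts the block Pfaffian to a pure Stembridge Pfaffian at the cost of a small weight manipulation argument, and the prefactor $(-1)^{\binom{q}{2}}$ emerges automatically from moving the $q$ distinguished columns to the front of the matrix.
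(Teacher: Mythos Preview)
Your approach is essentially the one the paper points to: the paper omits the proof entirely, stating only that Stembridge's argument in~\cite{Stem} (or alternatively the Ishikawa--Wakayama minor summation formula~\cite{Ishikawa}) carries over with little extra effort to the present block setting. Your outline---expand the Pfaffian using the vanishing $q\times q$ block, unfold each $Q_{i,j}$, reconcile the signs into $(-1)^{\binom{q}{2}}\sgn\pi$, then cancel intersecting families by tail-swapping---is exactly that adaptation, and the alternative reduction via fictitious $\Ical$-slots is a valid parallel route.
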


 The preceding theorem is a slight extension of a result of Stembridge~\cite[Theorem 3.2]{Stem} and is just
a weighted version of~Theorem~5 in~\cite{CiuKrat-interaction}. Since
the proof of Stembridge's Theorem (alternatively, we can use the
minor summation formula of Ishikawa and
Wakayama~\cite[Theorem~2]{Ishikawa}) is easily adapted with only a
little extra effort to the preceding result, the proof details are
omitted.

\subsection{Centered vertically symmetric tilings and Pfaffians}
Combining the non-intersecting lattice paths interpretation of centered vertically symmetric
rhombus tilings with Theorem~\ref{thm:Paths-Pfaffians}, we obtain the following counting
Pfaffian formulas.

\begin{lem}\label{lem:tilings-Pfaffian_casA}
Let $n$ be a fixed nonnegative integer. Then, for any positive integer $x$,
the number of centered vertically symmetric rhombus tilings of a $(2n+1,2x,2n+1)$ hexagon
is equal to the Pfaffian of the skew-symmetric
matrix $M(x)= (M_{i,j}(x))_{1\leq i,j\leq 2n+2}$ defined by
\begin{align}\label{eq:def-Mij}
M_{i,j}(x)&=R_{i,j}(x)+T_{i,j}(x)-T_{j,i}(x),\quad 1\leq i,j\leq 2n+1,\\
M_{i,2n+2}(x)&=\binom {x+n}{i-1},\quad 1\leq i\leq 2n+1,\label{eq:def-Mij-Lr}
\end{align}
where $R_{i,j}(x)$ and $T_{i,j}(x)$ are defined by
\begin{align}
R_{i,j}(x)&:=\sum_{t=1}^{2x+2n+1}\frac{j-i}{t}{t\choose i}{t\choose
j}=\sum_{t=1}^{2x+2n+1}{t\choose i}{t-1\choose j-1}-{t-1\choose
i-1}{t\choose j},\label{eq:def-Rij}\\
T_{i,j}(x)&:={2x+2n+1\choose i}\left({x+n\choose j}+{x+n+1\choose j}
\right).\label{eq:def-Tij}
\end{align}
\end{lem}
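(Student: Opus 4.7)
The plan is to combine the bijection recalled in Section~\ref{sect:tilings-paths} with the Pfaffian formula of Theorem~\ref{thm:Paths-Pfaffians}, and then identify the entries of the resulting matrix with the closed-form expressions for $M(x)$ stated in the lemma. First, by the bijection from Section~\ref{sect:tilings-paths}, centered vertically symmetric tilings of a $(2n+1,2x,2n+1)$ hexagon are in bijection with families of $2n+1$ non-intersecting lattice paths whose $i$-th member runs from $A_i=(-i,i)$ to some point of $I=\{(-1,k):1\le k\le 2x+2n+1\}$, with $(-1,x+n+1)$ required to be an endpoint. I would then apply Theorem~\ref{thm:Paths-Pfaffians} with $\Acal=(A_1,\ldots,A_{2n+1})$ (so $p=2n+1$), $\Scal=((-1,x+n+1))$ as the single fixed sink (so $q=1$), and $\Ical=(I_1,I_2,\ldots)$ indexing the remaining points $(-1,k)$ for $k\in\{1,\ldots,2x+2n+1\}\setminus\{x+n+1\}$ in increasing order of $k$. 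Since $p+q=2n+2$ is even and $(-1)^{\binom{1}{2}}=1$, this produces the desired Pfaffian of a $(2n+2)\times(2n+2)$ matrix. Its last column entries are immediate: $H_{i,1}=h(A_i,(-1,x+n+1))=\binom{x+n}{i-1}$, since such a path consists of $i-1$ right steps and $x+n+1-i$ up steps, matching $M_{i,2n+2}(x)$.

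The bulk of the work lies in identifying the $(2n+1)\times(2n+1)$ upper-left block with $R_{i,j}(x)+T_{i,j}(x)-T_{j,i}(x)$. Writing $h(A_i,(-1,k))=\binom{k-1}{i-1}$, the corresponding block entries read
\[
Q_{i,j}=\sum_{\substack{s<t\\ s,t\ne m}}\Bigl[\binom{s-1}{i-1}\binom{t-1}{j-1}-\binom{s-1}{j-1}\binom{t-1}{i-1}\Bigr],
\]
where $m=x+n+1$. I would split this sum as (i) the unrestricted sum over all $1\le s<t\le 2x+2n+1$, plus (ii) a correction removing the pairs where $s$ or $t$ equals $m$. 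For part (i), the algebraic identity
\[
\binom{t}{i}\binom{t-1}{j-1}-\binom{t-1}{i-1}\binom{t}{j}=\frac{j-i}{t}\binom{t}{i}\binom{t}{j},
\]
together with the hockey-stick identity $\sum_{s=1}^{N}\binom{s-1}{r}=\binom{N}{r+1}$, collapses the double sum into $R_{i,j}(x)$. For part (ii), the pair-contributions at $s=m$ and at $t=m$ are recombined using Pascal's rule $\binom{m}{j}=\binom{m-1}{j-1}+\binom{m-1}{j}$, eventually assembling into the rank-two skew-symmetric piece $T_{i,j}(x)-T_{j,i}(x)$.

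The main obstacle will be the binomial bookkeeping behind this reassembly, which is complicated by a subtlety of Stembridge's formula in the present setting: the signed sum over $\pi\in\mathfrak{S}_{2n+1}$ on the right-hand side of Theorem~\ref{thm:Paths-Pfaffians} does not reduce to a single term, because several non-identity permutations of the sources contribute to non-intersecting families—each corresponds to a different source whose path terminates at $S_1=(-1,x+n+1)$. The particular splitting $M_{i,j}(x)=R_{i,j}(x)+T_{i,j}(x)-T_{j,i}(x)$ is precisely engineered so that $\Pf(M(x))$ recovers the \emph{unsigned} tiling count, and unpacking the equivalence between the sign-alternating Stembridge sum and the closed form $R+T-T^{\top}$ requires several rounds of Pascal's rule, the hockey-stick identity, and Chu-Vandermonde, carefully coordinated so that the boundary terms produced by each telescoping match the factored form of $T_{i,j}(x)$.
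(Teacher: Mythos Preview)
Your plan has a genuine gap at the sign-handling step, and it cannot be repaired by binomial identities alone. When you apply Theorem~\ref{thm:Paths-Pfaffians} with the unweighted path counts $h(A_i,(-1,k))=\binom{k-1}{i-1}$, the Pfaffian you obtain equals the \emph{signed} sum $\sum_{\pi}\operatorname{sgn}(\pi)\,\#\mathcal{P}^{\text{nonint}}(\Acal_\pi\to(\Scal,\Ical))$, not the tiling count. Concretely, if the path ending at the forced sink $S_1=(-1,m)$ (with $m=x+n+1$) starts from $A_r$, then $\operatorname{sgn}(\pi)=(-1)^{r-1}$, and $r-1$ is precisely the number of free paths whose endpoint lies below $I_m$; so your Pfaffian is $\sum_{\text{tilings}}(-1)^{\#\{\text{endpoints below }m\}}$. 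Correspondingly, your unweighted $Q_{i,j}$ is \emph{not} equal to $M_{i,j}(x)$: the two differ by $2\sum_{s<m<t}P_{i,j}(s,t)$, which is (see the paper's display~\eqref{eq:SumPij eq2}) a nonzero skew-symmetric rank-two term. No rearrangement via Pascal, hockey-stick, or Chu--Vandermonde can make a correct identity out of this, because you are trying to prove that two genuinely different matrices coincide.

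The missing idea, which the paper supplies, is to choose an edge-weight $w$ so that every path from $A_i$ to $(-1,j)$ with $j<m$ carries weight $-1$ and every other path weight $+1$. Then, for each contributing $\pi$, the weight of a non-intersecting family equals $(-1)^{r-1}=\operatorname{sgn}(\pi)$, and the signed $GF$-sum in Theorem~\ref{thm:Paths-Pfaffians} collapses to the \emph{unsigned} tiling count. With this weight one has $h(A_i,I_j)=(-1)^{\chi(j<m)}\binom{j-1}{i-1}$, so the $Q_{i,j}$-sum splits as
\[
\sum_{1\le s<t\le m-1}P_{i,j}+\sum_{m+1\le s<t\le 2x+2n+1}P_{i,j}\;-\;\sum_{s\le m-1<\,m+1\le t}P_{i,j},
\]
with a \emph{minus} sign on the cross block; it is exactly this sign flip that produces the $T_{i,j}-T_{j,i}$ piece after the telescoping. (A small technical point you also glossed over: with $A_1=(-1,1)$ the path to $I_1$ is empty and cannot be given weight $-1$ by edge weights; the paper first moves $A_1$ to $(-1,0)$, which leaves the non-intersecting counts unchanged but makes the weight construction possible.)
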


\begin{lem}\label{lem:tilings-Pfaffian_casB}
Let $n$ be a fixed positive integer. Then, for any nonnegative
integer $x$,  the number of centered vertically symmetric rhombus
tilings of a $(2n,2x+1,2n)$ hexagon  is equal to the Pfaffian of the
skew-symmetric matrix $N(x)= (N_{i,j}(x))_{1\leq i,j\leq 2n+2}$
defined by
\begin{align}
N_{i,j}(x)&=M_{i,j}(x)=R_{i,j}(x)+T_{i,j}(x)-T_{j,i}(x),\quad 1\leq i,j\leq 2n+1,\label{eq:def-Nij}\\
N_{i,2n+1}(x)&={2n+2x+1 \choose i}-{n+x \choose i}-{n+x+1 \choose i},\quad 1\leq i\leq 2n,\label{eq:Qij_B_lastcolumn}\\
N_{i,2n+2}(x)&=M_{i,2n+2}(x)=\binom {x+n}{i-1},\quad 1\leq i\leq 2n,\quad N_{2n+1,2n+2}(x)=0,\label{eq:def-Nij-Lr}
\end{align}
where $M_{i,j}(x)$, $R_{i,j}(x)$ and $T_{i,j}(x)$ are defined as in Lemma~\ref{lem:tilings-Pfaffian_casA}.
\end{lem}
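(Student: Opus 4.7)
My plan is to reduce Lemma~\ref{lem:tilings-Pfaffian_casB} to Lemma~\ref{lem:tilings-Pfaffian_casA} by combining a difference-of-Pfaffians trick with a bordering/invariance argument. By Section~\ref{sect:tilings-paths}, the desired number equals the count of $2n$-tuples $(P_1,\ldots,P_{2n})$ of non-intersecting lattice paths with $P_i$ running from $A_i:=(-i,i)$ to a point of $I:=\{(-1,j):1\le j\le N\}$, $N:=2n+2x+1$, subject to the single constraint that the middle point $m:=(-1,n+x+1)$ be an endpoint. Since $p=2n$ is even, Theorem~\ref{thm:Paths-Pfaffians} applied with $\Scal=\{m\}$, $\Ical=I\setminus\{m\}$ would force $p+q$ to be odd, which is forbidden. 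I circumvent this by writing
\[
\#(\text{centered vertically symmetric tilings of }F^*_{2n,2x+1,2n})=\Pf M^{(0)}(x)-\Pf M^{(1)}(x),
\]
where $M^{(0)}(x)$ and $M^{(1)}(x)$ are the $2n\times 2n$ Stembridge matrices produced by Theorem~\ref{thm:Paths-Pfaffians} applied with $q=0$ to the unrestricted problems $(\Scal,\Ical)=(\emptyset,I)$ and $(\Scal,\Ical)=(\emptyset,I\setminus\{m\})$, respectively; both are now legitimate since $p+q=2n$ is even.

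Setting $h_i(t):=\binom{t-1}{i-1}$, $\alpha_i:=h_i(m)=\binom{n+x}{i-1}$, and $\gamma_i:=\binom{n+x}{i}+\binom{n+x+1}{i}-\binom{N}{i}$, the contributions to $M^{(0)}-M^{(1)}$ come exactly from summation pairs $(s,t)$ with $s=m$ or $t=m$, and a short hockey-stick computation yields the rank-$2$ skew identity $M^{(0)}(x)-M^{(1)}(x)=\gamma\alpha^{T}-\alpha\gamma^{T}$. By the Pfaffian identity
\[
\Pf(A+uv^{T}-vu^{T})-\Pf(A)=-\Pf\!\begin{pmatrix}A&u&v\\ -u^T&0&0\\ -v^T&0&0\end{pmatrix},
\]
which is an instance of the cofactor expansion along the last row of the $(2n+2)\times(2n+2)$ bordered matrix, one therefore obtains
\[
\Pf M^{(0)}(x)-\Pf M^{(1)}(x)=-\Pf\!\begin{pmatrix}M^{(1)}(x)&\gamma&\alpha\\ -\gamma^T&0&0\\ -\alpha^T&0&0\end{pmatrix}.
\]

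The next ingredient is an invariance: for any vector $w$, the simultaneous row/column congruence $R_i\mapsto R_i-w_iR_{2n+1}$, $C_i\mapsto C_i-w_iC_{2n+1}$ applied for $i=1,\ldots,2n$ has determinant $1$, hence preserves the bordered Pfaffian above, while replacing the upper-left block $M^{(1)}(x)$ by $M^{(1)}(x)+w\gamma^{T}-\gamma w^{T}$; the border columns $\gamma,\alpha$ remain intact because the $2\times 2$ corner block vanishes. Choosing $w:=\tau-\alpha$ with $\tau_i:=\binom{N}{i}$, and using (i) the elementary identity $M^{(0)}_{i,j}(x)=\sum_{s<t}[h_i(s)h_j(t)-h_j(s)h_i(t)]=R_{i,j}(x)$, (ii) the relation $M^{(1)}(x)=M^{(0)}(x)+\alpha\gamma^{T}-\gamma\alpha^{T}$ from the previous paragraph, and (iii) the algebraic identity $T(x)-T(x)^{t}=\tau\gamma^{T}-\gamma\tau^{T}$ (a direct consequence of $\gamma=\sigma-\tau$ with $\sigma_i:=\binom{n+x}{i}+\binom{n+x+1}{i}$ and $T_{i,j}(x)=\tau_i\sigma_j$), one checks that the upper-left block becomes exactly $R_{i,j}(x)+T_{i,j}(x)-T_{j,i}(x)=M_{i,j}(x)$, matching the formula used in Lemma~\ref{lem:tilings-Pfaffian_casA}. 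A final sign conjugation by $\mathrm{diag}(I_{2n},-1,1)$ absorbs the overall minus sign and converts the border column $\gamma$ into $-\gamma_i=\binom{N}{i}-\binom{n+x}{i}-\binom{n+x+1}{i}=N_{i,2n+1}(x)$, yielding precisely the matrix $N(x)$ of the statement.

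The main technical obstacle in this plan is the sign bookkeeping: one must keep careful track of the signs arising in the Pfaffian cofactor expansion, in the successive row/column congruences, and in the concluding sign conjugation, and verify the algebraic identity $T-T^{t}=\tau\gamma^{T}-\gamma\tau^{T}$ that makes the invariance applicable, so that the explicit binomial entries in $N(x)$ are matched exactly rather than up to an unwanted sign or permutation.
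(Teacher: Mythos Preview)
Your argument is correct and takes a genuinely different route from the paper's proof.

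The paper handles the parity obstruction ($p=2n$, $q=1$) by introducing a \emph{phantom vertex}: it adds an extra starting point $A'_{2n+1}=(0,-1)$ coinciding with an extra endpoint $I'_{2x+2n+2}$, thereby making $p=2n+1$ and $q=1$ so that $p+q$ is even, and then applies Theorem~\ref{thm:Paths-Pfaffians} once with the same signed weight function~\eqref{eq:def_weight1} used in Lemma~\ref{lem:tilings-Pfaffian_casA}. The entries of $N(x)$ are then computed directly from the definitions~\eqref{eq:defQij}--\eqref{eq:defHij}, with~\eqref{eq:Qij_B_lastcolumn} arising from the extra row/column contributed by the phantom vertex.

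Your approach instead stays at $q=0$, expresses the constrained count as the difference $\Pf M^{(0)}-\Pf M^{(1)}$ of two unconstrained Stembridge Pfaffians, and then uses the rank-two bordering identity $\Pf(A+uv^{T}-vu^{T})-\Pf A=-\Pf\bigl(\begin{smallmatrix}A&u&v\\-u^{T}&0&0\\-v^{T}&0&0\end{smallmatrix}\bigr)$ together with a unimodular congruence to land on $N(x)$. This is a legitimate alternative: it avoids both the phantom vertex and the signed weighting, and it makes structurally transparent why the $(2n{+}1)$-st border column encodes the ``middle point is hit'' constraint while the principal $2n\times 2n$ block coincides with that of $M(x)$. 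The price is the extra Pfaffian algebra (the bordering identity, the congruence with $w=\tau-\alpha$, and the check $T-T^{t}=\tau\gamma^{T}-\gamma\tau^{T}$), whereas the paper's price is the phantom-vertex bookkeeping and the weight function.

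One small point you should make explicit: when you invoke Theorem~\ref{thm:Paths-Pfaffians} with $q=0$ and unit weights, you are implicitly using that only $\pi=\mathrm{id}$ contributes a nonempty $\mathcal{P}^{\mathrm{nonint}}(\Acal_\pi\to\Ical)$, so that $\Pf M^{(0)}$ and $\Pf M^{(1)}$ are genuine (unsigned) counts. This follows from the same geometric observation used in the paper's proof of Lemma~\ref{lem:tilings-Pfaffian_casA} (with ordered endpoints $j_1<\cdots<j_{2n}$ the non-intersection condition forces the entire sequence $\pi_1\cdots\pi_{2n}$ to be increasing), but it deserves a sentence.
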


\begin{rmk}\label{rmk:Mn vs Nn}
 The matrices $M(x)$ and $N(x)$ differ only in the $(2n+1)$-th row and $(2n+1)$-th column.
\end{rmk}

 Before we turn to the proof of these two lemmas, we
provide an alternative expression for $R_{i,j}(x)$ which has the
advantage to be polynomial in $x$:
\begin{align}\label{eq:def-Rij-bis}
R_{i,j}(x)&=\sum_{\ell=0}^{i-1}\frac{j-i}{i}{j-1\choose
i-1-\ell}{\ell+j\choose \ell}{2x+2n+2\choose \ell+j+1}.
\end{align}
 The proof that the above relation is equivalent to~\eqref{eq:def-Rij} (for $x\geq 0$) amounts to a routine computation
 (see e.g. the proof of Equation~(4.13) in~\cite{CiuKrat-interaction}).
 We should also notice that the equivalence of these two relations
 also hold for $x<0$ once we interpret sums by
\begin{align}
 \sum_{k=m}^{n-1}\textrm{Expr}(k)=\left\{
           \begin{array}{ll}
             \sum_{k=m}^{n-1}\textrm{Expr}(k), & \hbox{if $n>m$,} \\
             0, & \hbox{if $n=m$,}\\
             -\sum_{k=n}^{m-1}\textrm{Expr}(k), & \hbox{if $n<m$,}
           \end{array}
         \right.
\end{align}
as in~\cite[Section~5]{CiuKrat-interaction}. With what we have said,
the expression~\eqref{eq:def-Rij} for $R_{i,j}(x)$ makes sense for
negative integers $x$ also and is equal to the expression
in~\eqref{eq:def-Rij-bis}. We will make use of this fact later in
this paper.\\

\noindent\textit{Proof of Lemma~\ref{lem:tilings-Pfaffian_casA}.}
 Let $T(F^*_{2n+1,2x,2n+1})$ denote the number of rhombus tilings of the region $F^*_{2n+1,2x,2n+1}$.
Let $\Acal$, $\Scal$ and $\Ical$ be the list of points in $\mathbb{Z}^2$ defined
by
\begin{align}\label{eq:def-SandI}
\begin{split}
 &\Acal=(A_1,A_2,\ldots,A_{2n+1})\quad\text{with $A_i=(-i, i)$},\\
 &\Scal=(I_{n+x+1})\quad\text{and}\quad\Ical=(I_1, I_2,\ldots,I_{2x+2n+1})\setminus(I_{n+x+1})\quad\text{with $I_j=(-1,j)$}.
\end{split}
\end{align}
It follows from what we have said in Section~\ref{sect:tilings-paths} that
\begin{align}\label{eq:T(Hnx)}
T(F^*_{2n+1,2x,2n+1})=\sum_{\pi\in \mathfrak{S}_{2n+1}}
\#\mathcal{P}^{nonint} (\Acal_{\pi}\rightarrow (\Scal,\Ical)),
\end{align}
where $\mathcal{P}^{nonint} (\Acal_{\pi}\rightarrow (\Scal,\Ical))$ is defined
as in Theorem~\ref{thm:Paths-Pfaffians}. Therefore,
by~\eqref{eq:Pfaffian-nonintersecting}, in order to express
$T(H_{2n+1,2x})$ in term of a Pfaffian, it suffices to find a weight-function that assigns ${\rm sgn}\,\pi$ to each family of paths in
$\mathcal{P}^{nonint} (\Acal_{\pi}\rightarrow (\Scal,\Ical))$ for
$\pi\in\mathfrak{S}_{2n+1}$. To this end, it shall be convenient to
slightly modify the list $\Acal$ (because of the possible occurrence of empty paths,
as in Figure~\ref{fig:tilings-paths}, which always have weight 1).
Let $\widetilde\Acal$ be defined by
\begin{align}\label{eq:def-Amod}
 \widetilde\Acal=(\widetilde A_1,\widetilde A_2,\ldots,\widetilde A_{2n+1})\quad
 \text{with $\widetilde A_1=(-1, 0)$ and $\widetilde A_i=A_i=(-i, i)$ for $i\geq 2$}.
\end{align}
 It is easily checked that
 $\#\mathcal{P}^{nonint} (\Acal_{\pi}\rightarrow (\Scal,\Ical))=\#\mathcal{P}^{nonint} (\widetilde\Acal_{\pi}\rightarrow (\Scal,\Ical))$
for any $\pi$ in~$\mathfrak{S}_{2n+1}$. So, by~\eqref{eq:T(Hnx)}, we have
\begin{align}\label{eq:T(Hnx)-mod}
T(F^*_{2n+1,2x,2n+1})=\sum_{\pi\in \mathfrak{S}_{2n+1}}
\#\mathcal{P}^{nonint} (\widetilde\Acal_{\pi}\rightarrow (\Scal,\Ical)).
\end{align}
Let $\pi=\pi_1\pi_2\ldots\pi_{2n+1}$  be a permutation in
$\mathfrak{S}_{2n+1}$, written in one-line notation, such that
$\mathcal{P}^{nonint}(\widetilde\Acal_{\pi}\rightarrow
(\Scal,\Ical))$ is non-empty. From the definition of the $\widetilde
A_i$'s and $I_j$'s, it is not difficult to see that the sequence
$\pi_2\pi_3\cdots\pi_{2n+1}$ must be increasing, and thus the number
of inversions of $\pi$, $\inv\,\pi$, is equal to $\pi_1-1$. This
implies that, for any family $\left(P_1, P_2,\ldots, P_{2n+1}\right)$ in
$\mathcal{P}^{nonint}(\widetilde\Acal_{\pi}\rightarrow
(\Scal,\Ical))$,  $\inv\,\pi$  is the number of paths $P_i$ that
ends at a point $I_j$ with $j<n+x+1$. So, if $w$ is a weight-function
on the edges of the lattice $\mathbb{Z}^2$ such that the
weight of a path $P$ from $\widetilde A_i$ to $I_j$ satisfies
\begin{align}
  w(P)=\left\{
  \begin{array}{ll}
    +1, & \hbox{$j\geq n+x+1$,} \\
    -1, & \hbox{$j<n+x+1$,}
  \end{array}
\right.\label{eq:def_weight1}
\end{align}
we would obtain from~\eqref{eq:T(Hnx)-mod},~\eqref{eq:sign} and what
we have said that
\begin{align}\label{eq:T(Hnx)-bis}
T(F^*_{2n+1,2x,2n+1})=\sum_{\pi\in \mathfrak{S}_{2n+1}} ({\rm sgn}\,\pi)\;
GF[\mathcal{P}^{nonint} (\widetilde\Acal_{\pi}\rightarrow (\Scal,\Ical)],
\end{align}
where $GF$ is the generating function according to the weight $w$,
as defined in Section~\ref{sect:paths-Pfaffian}. Note that the
construction of such a weight-function $w$ is easy: for instance, we
can assign to each edge of the lattice~$\mathbb{Z}^2$ the value $1$
except for the edges $\{(-1,0),(-1,1)\}$, $\{(-2,j),(-1,j)\}$ for
$2\leq j<n+x+1$, and $\{(-1,n+x),(-1,n+x+1)\}$, to which we assign
the value $-1$. So, we can assume that~\eqref{eq:T(Hnx)-bis} is true
for some weight-function $w$ satisfying~\eqref{eq:def_weight1}.

 Let $h(\widetilde A_i,I_j)$ be defined as in~\eqref{eq:def h(A,B)} and $P(\widetilde A_i, I_j)$
 denote the number of lattice paths from $\widetilde A_i$ to $I_j$. Then,
 by~\eqref{eq:def_weight1}, we have $h(\widetilde
A_i,I_j)=\epsilon P(\widetilde A_i, I_j)$ where $\epsilon=-1$ if $j<
n+x+1$ and $\epsilon=1$ otherwise. This, combined with the well
known counting of lattice paths, implies that
\begin{align}\label{eq:NumbLatticePaths}
h(\widetilde A_i, I_j)=(-1)^{\chi\left(j<  n+x+1\right)}\binom {j-1}{i-1},
\end{align}
where, as usual, for a claim $F$, $\chi(F)$ is 1 if $F$ is true and
0 otherwise. 
 Combining~\eqref{eq:T(Hnx)-bis} with Theorem~\ref{thm:Paths-Pfaffians}, we see that
\begin{align}\label{eq:def-MatTil1}
T(F^*_{2n+1,2x,2n+1})=\Pf\left(
    \begin{array}{cc}
      Q& H \\
      -H^t & 0 \\
    \end{array}
  \right),
\end{align}
where $Q= (Q_{i,j})_{1\leq i,j\leq 2n+1}$ is the matrix defined by
\begin{align}\label{eq:def_Qij}
Q_{i,j}=\sum_{1\leq s<t\leq 2x+2n+1\atop s,t\neq n+x+1}h(\widetilde
A_i,I_s)\, h(\widetilde A_j,I_t) - h(\widetilde A_j,I_s)\, h(\widetilde
A_i,I_t),
\end{align}
and $H=(H_{i,1})_{1\leq i\leq 2n+1}$ is the column vector defined by
\begin{align}\label{eq:def_Hi1}
H_{i,1}= h(\widetilde A_i,I_{x+n+1})=\binom
{x+n}{i-1},
\end{align}
where the last equality follows from~\eqref{eq:NumbLatticePaths}. So, to complete the proof of the lemma,
it suffices to show that the matrix entry $Q_{i,j}$ in~\eqref{eq:def_Qij} is equal
to the right-hand side of\eqref{eq:def-Mij}.

Let us define
\begin{align}\label{eq:def_Pij}
P_{i,j}(s,t) &={s-1\choose i-1}{t-1\choose j-1}-{s-1\choose
j-1}{t-1\choose i-1}.
\end{align}
Then, combining~\eqref{eq:def_Qij} with~\eqref{eq:NumbLatticePaths}, we obtain
\begin{align}\label{eq:decompo-Qij}
Q_{i,j}=\sum_{1\leq s<t\leq x+n}P_{i,j}(s,t)+\sum_{x+n+2\leq s<t\leq
2x+2n+1}P_{i,j}(s,t)-\sum_{s=1}^{n+x}\sum_{t=n+x+2}^{2n+2x+1}P_{i,j}(s,t).
\end{align}
By~\eqref{eq:def_Pij}, the last sum in the preceding relation can be evaluated as follows:
\begin{align}
\sum_{s=1}^{n+x}\sum_{t=n+x+2}^{2n+2x+1}P_{i,j}(s,t)
&=\sum_{s=1}^{n+x}{s-1\choose i-1}
\sum_{t=n+x+2}^{2n+2x+1}{t-1\choose j-1}-\sum_{s=1}^{n+x}
{s-1\choose j-1}\sum_{t=n+x+2}^{2n+2x+1} {t-1\choose i-1}\nonumber\\
\begin{split}\label{eq:SumPij eq1}
&={n+x\choose i}\left({2n+2x+1\choose j}-{n+x+1\choose j}\right)\\
&\quad- {n+x\choose j}\left({2n+2x+1\choose i}-{n+x+1\choose
i}\right)
\end{split}\\
\begin{split}\label{eq:SumPij eq2}
&={n+x\choose i}{2n+2x+1\choose j}-{n+x\choose j}{2n+2x+1\choose i}\\
&\quad+\frac{i-j}{n+x+1}{n+x+1\choose i}{n+x+1\choose j},
\end{split}
\end{align}
where we used the well-known identity
\begin{align}\label{eq:BinomialSum}
\sum_{\ell=m}^{n}{\ell\choose i}={n+1\choose i+1}-{m\choose i+1}
\end{align}
to obtain~\eqref{eq:SumPij eq1}. Similarly,
using~\eqref{eq:BinomialSum}, we can prove (for details, see
e.g.~\cite[Equation~(4.10)]{CiuKrat-interaction} where the case
$M=1$ is treated) that
\begin{align}
\sum_{M\leq s<t\leq N}P_{i,j}(s,t)&=\sum_{M\leq s<t\leq N}\left(
{s-1\choose i-1}{t-1\choose j-1}- {s-1\choose j-1}{t-1\choose
i-1}\right)\nonumber\\
&=\sum_{t=M}^N \frac{j-i}{t}{t\choose i}{t\choose j}\;+{N\choose
i}{M-1\choose j}- {N\choose j}{M-1\choose i}.\label{eq:Pij from M to
N}
\end{align}
Finally, after plugging~\eqref{eq:SumPij eq2} and~\eqref{eq:Pij from M to N} into~\eqref{eq:decompo-Qij},
it is easily checked that the matrix entry $Q_{i,j}$ is equal to the right-hand side of
the relation~\eqref{eq:def-Mij}. This ends the proof. \qed\\


\noindent\textit{Proof of Lemma~\ref{lem:tilings-Pfaffian_casB}.}
Using the correspondence illustrated in
Figure~\ref{fig:tilings-paths} and the same reasoning as in the
proof of~\eqref{eq:T(Hnx)-mod}, we see that $T(F^*_{2n,2x+1,2n})$,  the
number of tilings of the region $F^*_{2n,2x+1,2n}$, is equal to the number of families $(P_1,
P_2, \ldots, P_{2n})$ of non-intersecting lattice paths, where for
$i= 1, 2,\ldots,2n$, $P_i$ runs from $\widetilde A_i$ to some $I_j$,
with the additional condition that $I_{n+x+1}$ must be an ending
point of some path. Here, the $\widetilde A_i$'s and $I_j$'s have
the same meaning as in~\eqref{eq:def-Amod} and~\eqref{eq:def-SandI}.
To be able to use Theorem~\ref{thm:Paths-Pfaffians}, we shall need
to introduce a phantom vertex (see the remark after Theorem~3.1
in~\cite{Stem}) to fulfill the parity condition on $p+q$ in
Theorem~\ref{thm:Paths-Pfaffians}.

Let $\Acal'=(A'_1,A'_2,\ldots,A'_{2n+1})$, $\Scal'=(I_{n+x+1}')$ and $\Ical'=(I_1', I_2',\ldots,I_{2x+2n+2}')\setminus(I_{n+x+1}')$
be the list of points in $\mathbb{Z}^2$ defined by
\begin{align}\label{eq:def-S'andI'}
\begin{split}
 A'_1&=\widetilde A_1=(-1, 0),\quad   A'_i=\widetilde A_i=(-i, i)\quad\text{for $1\leq i\leq 2n$},\quad A'_{2n+1}=(0,-1),\\
 I'_j&=I_j=(-1,j)\quad\text{for $1\leq j\leq 2x+2n+1$},\quad I'_{2x+2n+2}=A'_{2n+1}=(0,-1).
\end{split}
\end{align}
Then, from what we have said in the
preceding paragraph it is not hard to show that
\begin{align}\label{eq:T(Hnx)'}
T(F^*_{2n,2x+1,2n})=\sum_{\pi\in \mathfrak{S}_{2n+1}}
\#\mathcal{P}^{nonint} (\Acal'_{\pi}\rightarrow (\Scal',\Ical')).
\end{align}
Let $\pi=\pi_1\pi_2\ldots\pi_{2n+1}$ be a permutation in
$\mathfrak{S}_{2n+1}$  such that $\mathcal{P}^{nonint}(\Acal'_{\pi}\rightarrow (\Scal',\Ical'))$
is non-empty. Then, from the definition of the $A_i'$'s and $I_j'$'s, it is not difficult to
see that the sequence $\pi_2\pi_3\cdots\pi_{2n+1}$ must be increasing and $\pi_{2n+1}=2n+1$ (for, if $\left(P_1,\ldots,
P_{2n+1}\right)\in P^{nonint}(\Acal'_{\pi}\rightarrow (\Scal',\Ical'))$, then
$P_{2n+1}$ must be the empty path from $A'_{2n+1}$ to
$I'_{2x+2n+2}$). Consequently, $\inv\,\pi$ is equal to $\pi_1-1$, and thus is also, for any family $\left(P_1,
P_2,\ldots, P_{2n+1}\right)$ in $P^{nonint}(\Acal'_{\pi}\rightarrow (\Scal',\Ical'))$, the
number of paths $P_i$ such that $P_i$ ends at a point $I'_j$ with
$1\leq j<n+x+1$. As it is done in the proof of Lemma~\ref{lem:tilings-Pfaffian_casA}, it is easy to find a weight-function
$w'$ on the edges of $\mathbb{Z}^2$ such that the weight of a path $P$ from $A'_i$ to $I'_j$ satisfies
\begin{align}
 w'(P)=\left\{
  \begin{array}{ll}
    +1, & \hbox{if $j\geq n+x+1$,} \\
    -1, & \hbox{if $1\leq j<n+x+1$.}
  \end{array}
\right.\label{eq:def_weight2}
\end{align}
We omit the details. It follows from~\eqref{eq:T(Hnx)'},~\eqref{eq:sign} and 
what we have said that
\begin{align}\label{eq:T(Hnx)-bis'}
T(F^*_{2n,2x+1,2n})=\sum_{\pi\in \mathfrak{S}_{2n+1}} ({\rm sgn}\,\pi)\;
GF[\mathcal{P}^{nonint} (\Acal'_{\pi}\rightarrow (\Scal',\Ical'))].
\end{align}

 Let $P(A_i', I_j')$ denote the number of lattice paths from $A_i'$ to $I_j'$.
 Similarly to~\eqref{eq:NumbLatticePaths},
we have $h(A_i',I_j')=\epsilon P(A_i', I_j')$ where $\epsilon=-1$ if
$j< n+x+1$ and $\epsilon=1$ otherwise, whence
\begin{align}\label{eq:NumbLatticePaths-b}
h(A_i', I_j')=\left\{\begin{array}{ll}
     (-1)^{\chi\left(j<  n+x+1\right)}\binom {j-1}{i-1}, & \hbox{if $i\neq 2n+1$ and $j\neq 2x+2n+2$,} \\
     1, & \hbox{if $(i,j)=(2n+1,2x+2n+2)$,} \\
     0, & \hbox{otherwise.}
  \end{array}\right.
\end{align}
Combining~\eqref{eq:T(Hnx)-bis'} with Theorem~\ref{thm:Paths-Pfaffians} and~\eqref{eq:NumbLatticePaths-b},
we obtain that
\begin{align}\label{eq:defMat1}
T(F^*_{2n,2x+1,2n})=\Pf\left(
    \begin{array}{cc}
      Q& H \\
      -H^t & 0 \\
    \end{array}
  \right),
\end{align}
where $Q= (Q_{i,j})_{1\leq i,j\leq 2n+1}$ is the matrix defined by
\begin{align}\label{eq:def_Qij-b}
Q_{i,j}=\sum_{1\leq s<t\leq 2x+2n+2\atop s,t\neq
n+x+1}h(A_i',I_s')\, h(A_j',I_t') - h(A_j',I_s')\, h(A_i',I_t'),
\end{align}
and $H=(H_{i,1})_{1\leq i\leq 2n+1}$ is the column vector defined by
\begin{align}\label{eq:def_Hi1-b}
H_{i,1}= h(A_i',I_{x+n+1}')=\chi(i\neq 2n+1)\binom {x+n}{i-1},
\end{align}
where the last equality follows from~\eqref{eq:NumbLatticePaths-b}.

Comparing~\eqref{eq:NumbLatticePaths-b}
with~\eqref{eq:NumbLatticePaths}, it is not hard to see that, for
$1\leq i,j\leq 2n$, the right-hand member of~\eqref{eq:def_Qij-b} is
equal to the right-hand member of~\eqref{eq:def_Qij} (which was shown to be the right-hand member of $\eqref{eq:def-Mij}$).
Note that this can also be directly proved from the combinatorial interpretation of~\eqref{eq:def_Qij} and~\eqref{eq:def_Qij-b}
(see e.g. Equation~(3.1) in~\cite{Stem}).
On the other hand, when $1\leq i\leq 2n$ and $j=2n+1$, from~\eqref{eq:def_Qij-b}
and~\eqref{eq:NumbLatticePaths-b} we have
\begin{align}
Q_{i,2n+1}&=\sum_{t=1\atop t\neq n+x+1}^{2x+2n+1}h(A'_i,I'_t)
=-\sum_{t=1}^{n+x}{t-1 \choose
i-1}+\sum_{t=n+x+2}^{2n+2x+1}{t-1\choose i-1},
\end{align}
which, by~\eqref{eq:BinomialSum}, simplifies to the right-hand
member of~\eqref{eq:Qij_B_lastcolumn}. Summarizing, we have proved that the matrix
in~\eqref{eq:defMat1}  is equal to the matrix $N(x)$ described in the lemma.
This ends the proof.
\qed

%
%
\section{Proof of Theorem~\ref{thm:Ncentered(2n+1,2x)}}

 Throughout this section, we assume that $n$ is a (fixed) nonnegative integer.
From~\eqref{eq:def N(a,b)}, it is easily checked that $ST(2n+1,2x,2n+1)$ can be written in the form
\begin{align*}
ST(2n+1,2x,2n+1)=\frac{(x+\frac{1}{2})_{2n+1}}{(\frac{1}{2})_{2n+1}}
               \prod_{s=1}^{n}\frac{(2x+2s)_{4n-4s+3}}{(2s)_{4n-4s+3}}.
\end{align*}
This, combined with~Lemma~\ref{lem:tilings-Pfaffian_casA}, implies
that Theorem~\ref{thm:Ncentered(2n+1,2x)} is equivalent to the
equation
\begin{align}
\begin{split}\label{eq:Theorem1_bis}
 \Pf M(x)=
\frac{1}{4}\frac{(2n+2)!^2(2n)!}{(n+1)!^2(4n+1)!}\,
\prod_{s=1}^{n}\frac{(2x+2s)_{4n-4s+3}}{(2s)_{4n-4s+3}}\\
\times
\left(\sum_{i=0}^n\frac{(-1)^{n-i}}{2n+1-2i}\frac{(x+n+1-i)_{2i}}{(i!)^2}\right),
\end{split}
\end{align}
where $M(x)$ is the matrix defined in Lemma~\ref{lem:tilings-Pfaffian_casA}. We shall prove the
latter formula.

 In Sections~\ref{sec:PfM-1} and~\ref{sec:PfM-2}, we prove that $\Pf\,M(x)$ is a polynomial in $x$ of degree at most $2n^2+3n$
 and that
\begin{align}\label{eq:inv_shift-Pfaffian}
\Pf\, M(x)=(-1)^n\Pf\,M(-2n-1-x).
\end{align}
 In Sections~\ref{sec:PfM-3} and~\ref{sec:PfM-4}, we show that $\prod_{s=1}^n(x+s)^{s}$ and
$\prod_{s=1}^n(x+s+\frac{1}{2})^{s}$ divide $\Pf\, M(x)$ as a polynomial in $x$.
By~\eqref{eq:inv_shift-Pfaffian}, this implies that
\begin{align*}
\left(x+n+\frac{1}{2}\right)^{n}
\prod_{s=1}^{n-1}\left(x+s+\frac{1}{2}\right)^{s}
\left(x+2n+1-s-\frac{1}{2}\right)^{s}
\prod_{s=1}^n(x+s)^{s}(x+2n+1-s)^{s},
\end{align*}
 which simplifies to
\begin{align}
2^{-2n^2-n}\,\prod_{s=1}^{n}\left(2x+2s\right)_{4n-4s+3}
\end{align}
 and is a polynomial of degree $2n^2+n$, divides $\Pf M(x)$ as a polynomial in $x$.
 Altogether, this implies that
\begin{align}\label{eq:def-Px}
\Pf M(x)=P(x)\,\prod_{s=1}^{n}\left(2x+2s\right)_{4n-4s+3}
\end{align}
for some  polynomial $P(x)$ in $x$ of degree at most $2n$. Therefore, in order to prove~\eqref{eq:Theorem1_bis},
it remains to show that $P(x)$ is equal to the polynomial $K(x)$ defined by
\begin{align}\label{eq:def-Kx}
K(x)&=
\frac{1}{4}\frac{(2n+2)!^2(2n)!}{(n+1)!^2(4n+1)!}\prod_{s=1}^{n}\frac{1}{(2s)_{4n-4s+3}}\,
\left(\sum_{i=0}^n\frac{(-1)^{n-i}}{2n+1-2i}\frac{(x+n+1-i)_{2i}}{(i!)^2}\right).
\end{align}
 Since $P(x)$ and $K(x)$  are polynomials in $x$ of degree at most $2n$, it suffices to show that~$P(x)=K(x)$ holds for
$2n+1$ distinct values of $x$. In Section~\ref{sec:PfM-5}, we determine the values of
$P(x)$ and $K(x)$ at $x=0,-1,\dots,-n$ and consequently show that $P(x)=K(x)$ at $x=0,-1,\ldots,-n$.
Since, by~\eqref{eq:def-Px} and~\eqref{eq:inv_shift-Pfaffian},  $P(x)=P(-2n-1-x)$ and, by~\eqref{eq:def-Kx}, $K(x)=K(-2n-1-x )$  for any $x$,
this shows at the same time that $P(x)=K(x)$ at $x=-2n-1,-2n,\dots,-n-1$. In total, there
holds that $P(x)=K(x)$ at $2n+2$ values of $x$. This would complete the proof of~\eqref{eq:Theorem1_bis}.


\subsection{ $\Pf\,M(x)$ is a polynomial in $x$ of degree at most $2n^2+3n$}\label{sec:PfM-1}

   By Lemma~\ref{lem:tilings-Pfaffian_casA} and~\eqref{eq:def-Rij-bis}, the $(i,j)$-entry of
  $M(x)$ is a polynomial in $x$ of degree $i+j$ if $1\leq i, j\leq 2n+1$ and $i\neq j$,
  of degree $i-1$ if $1\leq i\leq 2n+1$ and $j=2n+2$. Moreover, $M_{i,j}(x)=0$ if $i=j$.
  By the formal definition of a Pfaffian (see e.g. \cite[Section~2]{Stem}), this immediately shows that $\Pf\,M(x)$ is
  a polynomial in $x$. This also implies that in the defining expansion of the determinant $\det M(x)$ each nonzero term
  is a polynomial of degree
 \begin{align*}
\left(\sum_{i=1}^{2n+1}i\right)-1+\left(\sum_{j=1}^{2n+1}j\right)-1=4n^2+6n.
 \end{align*}
  Consequently, $\det M(x)$ is a polynomial of degree at most $4n^2+6n$.
  The claim then follows from~\eqref{eq:pf-det}.

 \subsection{ $\Pf M(x)=(-1)^n \Pf M(-2n-1-x)$}\label{sec:PfM-2}
 We shall transform, up to sign, $M(x)$ into
$M(-2n-1-x)$ by a sequence of elementary row and column operations.
More precisely, let $B=\left(B_{i,j}\right)_{1\leq i,j\leq 2n+2}$ be
the lower triangular matrix of size $2n+2$ defined by
\begin{align}
B=\left(
    \begin{array}{ccc|c}
       & & & 0 \\
      &\left(B_{i,j}\right)_{\substack{1\leq i\leq 2n+1 \\ 1\leq j\leq 2n+1}} && \vdots \\
      &&& 0 \\\hline
      0&\cdots&0&-1
    \end{array}
  \right),
  \quad\text{with $B_{i,j}=\binom{i-1}{j-1}$ for $1\leq i,j\leq 2n+1$,}
\end{align}
 and let $M^{(1)}=\left(M^{(1)}_{i,j}\right)_{1\leq i,j\leq 2n+2}$ be the skew-symmetric matrix (of size $2n+2$) defined by $M^{(1)}=B\, M(x)\,B^t$.
 We claim that the $(i,j)$-entry in $M^{(1)}$ is, up to the sign $(-1)^{i+j}$,  the $(i,j)$-entry in $M(-2n-1-x)$.
 Since $\det B=-1$, this would yield
 $$\det M(x)=\det M^{(1)}=\det M(-2n-1-x),$$
 which, combined with~\eqref{eq:pf-det} and the degree of $\Pf M(x)$, would lead to~\eqref{eq:inv_shift-Pfaffian}, as desired.

We now turn to the proof of the claim that $M^{(1)}_{i,j}=(-1)^{i+j}M_{i,j}(-2n-1-x)$.
By definition of the matrices $M^{(1)}$ and $B$, we have
\begin{align}
M^{(1)}_{i,j}&=\sum_{a = 1}^{i} \sum_{b = 1}^{j}{{i-1}\choose{a-1}}
{{j-1}\choose{b-1}}  M_{a,b}(x),\quad\text{$1\leq i,j\leq 2n+1$,}\label{eq:def-Aij}\\
M^{(1)}_{i,2n+2}&=-\sum_{a = 1}^{i} {i-1 \choose{a-1}} M_{a,2n+2}(x),
\quad\text{$1\leq i\leq 2n+1$}.\label{eq:def-Aij-Lr}
\end{align}

Recall that, for $1\leq a,b\leq 2n+1$, $M_{a,b}(x)=R_{a,b}(x)
+T_{a,b}(x)-T_{b,a}(x)$, where the $R_{i,j}(x)$'s and $T_{i,j}(x)$'s
are given by~\eqref{eq:def-Rij} and~\eqref{eq:def-Tij}. It was
already shown in~\cite[Section~5, Step~1, Equations~(5.5)--(5.10)]{CiuKrat-interaction} that
\begin{align}
\sum_{a = 1}^{i} \sum_{b = 1}^{j}{{i-1}\choose{a-1}} {{j-1}\choose
{b-1}}R_{a,b}(x)&=
(-1)^{i+j}R_{i,j}(-2n-1-x).\label{eq:invarianceRij}
\end{align}
On the other hand, using the expression~\eqref{eq:def-Tij}, after a
routine calculation, we obtain
\begin{align}
&\sum_{a = 1}^{i} \sum_{b = 1}^{j}{{i-1}\choose{a-1}} {{j-1}\choose
{b-1}}\,T_{a,b}(x)\nonumber\\
&\quad=\sum_{a = 1}^{i} {{i-1}\choose{a-1}}{2x+2n+1\choose
a}\sum_{b = 1}^{j}{{j-1}\choose{b-1}}\left({x+n\choose b}+{x+n+1\choose b}\right)\nonumber\\
 &\quad=(-1)^{i+j} {-2x-2n-1 \choose i}\left({-n-x\choose j}+{-n-x-1\choose
 j}\right)\label{eq:invarianceTij-bis}\\
 &\quad=(-1)^{i+j}\,T_{i,j}(-2n-1-x),\label{eq:invarianceTij}
\end{align}
where, to obtain~\eqref{eq:invarianceTij-bis}, we used the relation
\begin{align}\label{eq:Chu-Van}
\sum_{k\geq 0}\binom {L}{k-\epsilon}\binom {M}{k}= \binom {L+M}{L+\epsilon}
=(-1)^{L+\epsilon}\binom {-M-1+\epsilon}{L+\epsilon}
\end{align}
which follows from the Chu--Vandermonde summation. After plugging~\eqref{eq:def-Mij} into~\eqref{eq:def-Aij}, it is immediate
from~\eqref{eq:invarianceRij} and~\eqref{eq:invarianceTij}  that $M^{(1)}_{i,j}=(-1)^{i+j}M_{i,j}(-2n-1-x)$ for $1\leq i,j\leq 2n+1$.
 Similarly, plugging~\eqref{eq:def-Mij-Lr} into~\eqref{eq:def-Aij-Lr}, we obtain for $1\leq
i\leq 2n+1$
\begin{align*}
M^{(1)}_{i,2n+2}= -\sum_{a = 1}^{i} {i-1 \choose{a-1}} \binom
{x+n}{a-1}=(-1)^{i}\binom{-x-n-1}{i-1}=(-1)^{i} M_{i,2n+2}(-2n-1-x),
\end{align*}
where the second equality again follows from~\eqref{eq:Chu-Van}
(specialized to $\epsilon=0$).

Summarizing, we have shown that
$M^{(1)}=\left((-1)^{i+j}M_{i,j}(-2n-1-x)\right)_{1\leq i,j\leq 2n+2}$, as
desired. This ends the proof.

\subsection{ $\prod_{s=1}^n(x+s)^{s}$ divides $\Pf M(x)$}\label{sec:PfM-3}

Let $s$ be an integer with $1\leq s\leq n$. It is easily checked
that
\begin{align}\label{eq:comblineaire-s}
&{}\,\textrm{For $2n-2s+2\leq a\leq 2n+1$, the $a$-th row of the
matrix $M(-s)$ is null.}
\end{align}
 Indeed, by Lemma~\ref{lem:tilings-Pfaffian_casA}, the entries  of the $a$-th row of $M(-s)$
are, for $1\leq j\leq 2n+1$,
\begin{align*}
M_{a,j}(-s)&=\sum_{t=1}^{2n-2s+1}\frac{j-a}{t}{t\choose a}{t\choose
j}+{2n-2s+1\choose a}\left({n-s\choose j}+{n-s+1\choose j}\right)\\
&\hspace{5cm} -{2n-2s+1\choose j}\left({n-s\choose a}+{n-s+1\choose
a}\right),
\end{align*}
and $M_{a,2n+2}(-s)={n-s\choose a-1}$. These entries are clearly
null if $a> 2n-2s+1$ since the binomial coefficients ${n-s\choose
a-1}$ and ${t\choose a}$, for $t=1,\ldots,2n-2s+1$, vanish.

 By~\eqref{eq:comblineaire-s}, we have
 $2s$ linear combinations of the rows of the matrix~$M(x)$
 that are linearly independent and vanish at $x=-s$. This,
 combined with the next lemma, yields the divisibility of $\Pf M(x)$  by~$(x+s)^{s}$, as desired.

\begin{lem}\cite[Section~2]{Krat}\label{lem:pf-factor}
Let $A(x)$ be a skew-symmetric matrix with polynomial entries. A
sufficient condition for $(x-b)^m$ to divide $\Pf A(x)$ is that
the dimension of the kernel of the matrix $A(b)$ is at least $2m$.
\end{lem}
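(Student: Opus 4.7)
The plan is to reduce the statement about the Pfaffian to a statement about the determinant via the classical identity $(\Pf A(x))^2=\det A(x)$ recorded in \eqref{eq:pf-det}. Since $\mathbb{C}[x]$ is a unique factorization domain, it suffices to prove that $(x-b)^{2m}$ divides $\det A(x)$: the divisibility $(x-b)^m\mid \Pf A(x)$ then follows at once by comparing multiplicities of the irreducible factor $(x-b)$ in the polynomial square root.

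To establish the determinant divisibility, I would choose $2m$ linearly independent vectors $v_1,\dots,v_{2m}\in\ker A(b)$, available by hypothesis, and extend them to a basis $v_1,\dots,v_N$ of $\mathbb{C}^N$, where $N$ is the size of $A$. Let $V$ denote the constant invertible matrix whose $k$-th column is $v_k$, and set $B(x):=V^{T}A(x)V$. Multiplicativity of the determinant gives $\det A(x)=(\det V)^{-2}\det B(x)$ with $\det V\in\mathbb{C}^{\times}$, so it is enough to show that $(x-b)^{2m}\mid\det B(x)$ in the polynomial ring.

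The key observation is that for each $k\in\{1,\dots,2m\}$, skew-symmetry of $A(b)$ together with $A(b)v_k=0$ implies $v_k^{T}A(b)=0$; hence the $k$-th row of $B(b)$ vanishes identically. Equivalently, every entry in the first $2m$ rows of the polynomial matrix $B(x)$ is individually divisible by $(x-b)$, and multilinearity of the determinant in the rows forces $(x-b)^{2m}$ to divide $\det B(x)$, completing the reduction.

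This is essentially a short linear algebra manipulation, and no genuinely hard step is involved. The one technical subtlety is the transition from $\det$ to $\Pf$: extracting a polynomial square root is legitimate precisely because $\mathbb{C}[x]$ is a UFD, so the multiplicity of the linear factor $(x-b)$ in $\Pf A(x)$ is exactly half its multiplicity in $\det A(x)$. The small piece of ingenuity is the use of a \emph{constant} change-of-basis matrix $V$, which forces the first $2m$ rows of $B(x)$ to be divisible by $(x-b)$ without introducing any spurious denominators or disturbing the polynomial character of the entries.
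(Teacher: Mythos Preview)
Your proof is correct. The paper does not actually prove this lemma; it merely quotes it from \cite[Section~2]{Krat} and uses it as a black box, so there is no ``paper's own proof'' to compare against. Your argument---reducing to the determinant via \eqref{eq:pf-det}, conjugating by a constant invertible matrix $V$ whose first $2m$ columns span $\ker A(b)$, and observing that the first $2m$ rows of $V^{T}A(x)V$ are entrywise divisible by $(x-b)$---is exactly the standard one (and is essentially what is done in the cited reference). The UFD remark handling the passage from $(x-b)^{2m}\mid(\Pf A(x))^2$ to $(x-b)^{m}\mid\Pf A(x)$ is the only point that needs care, and you have addressed it.
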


\subsection{$\prod_{s=1}^n(x+s+1/2)^{s}$ divides $\Pf M(x)$}\label{sec:PfM-4}

 We first notice that some coefficients of the matrix $M(x)$ when specialized to $x=-s-1/2$,
 $1\leq s\leq n$, have a relative simple form. Namely, for $2n-2s+1\leq i\leq 2n+1$ and $j\leq 2n+1$,
we have
\begin{align}
M_{i,j}(-s-1/2)&=-M_{j,i}(-s-1/2)=-{2n-2s\choose
j}\left({n-s-\frac{1}{2}\choose i}+{n-s+\frac{1}{2}\choose i}
\right),\label{eq:Qij-s-1/2_simplification_i>}
\end{align}
and thus,
\begin{align}\label{eq:Qij-s-1/2_annulation}
M_{i,j}(-s-1/2)&=0\quad \textrm{for  $1\leq s\leq n$ and
$2n-2s+1\leq i,j\leq 2n+1$}.
\end{align}
Indeed, by~\eqref{eq:def-Mij}, we have
\begin{align*}
M_{i,j}(-s-1/2)&=\sum_{t=1}^{2n-2s}\frac{j-i}{t}{t\choose
i}{t\choose j}+{2n-2s\choose i}\left({n-s-\frac{1}{2}\choose
j}+{n-s+\frac{1}{2}\choose
j}\right)\\
&\hspace{5cm} -{2n-2s\choose j}\left({n-s-\frac{1}{2}\choose
i}+{n-s+\frac{1}{2}\choose i} \right)
\end{align*}
and the binomial coefficients
${t\choose i}$ vanish for $t=1,\ldots,2n-2s$ and $i\geq 2n-2s+1$.

\subsubsection{The case $1\leq s\leq n-1$} Suppose $1\leq s\leq n-1$. We shall show that
\begin{align}
&(-1)^{a}{n+s-\frac{1}{2}\choose a+1}\cdot\left( \textrm{row $(2n-a)$ of $M(-s-1/2)$ } \right)\nonumber\\
&\hspace{1cm}-\left(n+s-\frac{1}{2}\right) {2n\choose a}\cdot \left(
\textrm{row $2n$ of $M(-s-1/2)$ } \right)\label{eq:comblineaire-s-1/2_a}\\
&\hspace{2cm}-a {2n+1\choose a+1}\cdot\left( \textrm{row $(2n+1)$ of
$M(-s-1/2)$ } \right)=0\nonumber
\end{align}
for $a=1,\ldots,2s-1$, and that
\begin{align}
&\sum_{i=1}^{2n-2s}(-2)^{i-1}\cdot\left( \textrm{row $i$ of $M(-s-1/2)$ } \right)\nonumber\\
&\hspace{1cm}-4^{n-s}
\frac{(2n-2s+2)_{2s}}{(n-s+\frac{1}{2})_{2s-1}}\cdot\left(
\textrm{row $2n$ of $M(-s-1/2)$ } \right)\label{eq:comblineaire-s-1/2_b}\\
&\hspace{2cm}-2n\cdot 4^{n-s}
\frac{(2n-2s+2)_{2s}}{(n-s+1/2)_{2s}}\cdot\left( \textrm{row
$(2n+1)$ of $M(-s-1/2)$ } \right)=0.\nonumber
\end{align}
As these are $2s$ linear combinations of the rows of the matrix
$M(x)$ that are linearly independent and vanish at $x=-s-1/2$, this
would lead, by Lemma~\ref{lem:pf-factor}, to the divisibility of $\Pf M(x)$ by~$(x+s+\frac{1}{2})^{s}$.\\

\noindent\emph{Proof of~\eqref{eq:comblineaire-s-1/2_a}.}
 Suppose $1\leq a\leq 2s-1$. We have to show that, for $1\leq j\leq
 2n+2$, we have
\begin{align}
\begin{split}\label{eq:comblineaire-s-1/2_a_firstcolumns}
(-1)^{a}{n+s-\frac{1}{2}\choose a+1} M_{2n-a,j}(-s-1/2) -
\left(n+s-\frac{1}{2}\right) {2n\choose a} M_{2n,j}(-s-1/2)&\\
- a {2n+1\choose a+1}  M_{2n+1,j}(-s-1/2)=0.&
\end{split}
\end{align}

(a) When $2n-2s+1\leq j\leq 2n+1$,  all the matrix entries in
\eqref{eq:comblineaire-s-1/2_a_firstcolumns} are null
by~\eqref{eq:Qij-s-1/2_annulation}.  So, the identity is clearly
true.\\

 (b) Suppose $1\leq j\leq 2n-2s$. By plugging~\eqref{eq:Qij-s-1/2_simplification_i>} into~\eqref{eq:comblineaire-s-1/2_a_firstcolumns},
 we obtain the relation
\begin{align}\label{eq:comblineaire-s-1/2_a_eq1}
&(-1)^{a}{n+s-\frac{1}{2}\choose a+1}\left({n-s-\frac{1}{2}\choose
2n-a}+{n-s+\frac{1}{2}\choose
2n-a}\right)\nonumber\\
&\qquad-\left(n+s-\frac{1}{2}\right) {2n\choose
a}\left({n-s-\frac{1}{2}\choose 2n}+{n-s+\frac{1}{2}\choose
2n}\right)&\\
&\qquad\qquad\qquad - a {2n+1\choose
a+1}\left({n-s-\frac{1}{2}\choose 2n+1}+{n-s+\frac{1}{2}\choose
2n+1}\right)=0,\nonumber
\end{align}
which amounts to a routine verification.\\

(c) Suppose $j=2n+2$. By~\eqref{eq:def-Mij-Lr},
\eqref{eq:comblineaire-s-1/2_a_firstcolumns} reduces  to the relation
\begin{align}
&(-1)^{a}{n+s-\frac{1}{2}\choose a+1}{n-s-\frac{1}{2}\choose 2n-a-1}
-\left(n+s-\frac{1}{2}\right) {2n\choose a}{n-s-\frac{1}{2}\choose 2n-1}\nonumber\\
&- a {2n+1\choose a+1}{n-s-\frac{1}{2}\choose
2n}=0,\label{eq:comblineaire-s-1/2_a_eq2}
\end{align}
which again amounts to a routine verification. This ends the proof
of~\eqref{eq:comblineaire-s-1/2_a}.
\qed\\

\noindent\emph{Proof of~\eqref{eq:comblineaire-s-1/2_b}.} We have to
show that, for $1\leq j\leq  2n+2$, we have
\begin{align}
\begin{split}\label{eq:comblineaire-s-1/2_b_eq}
\sum_{i=1}^{2n-2s}(-2)^{i-1} M_{i,j}(-s-1/2)&=4^{n-s}
\frac{(2n-2s+2)_{2s}}{(n-s+\frac{1}{2})_{2s-1}}M_{2n,j}(-s-1/2)\\
&\quad+2n\cdot 4^{n-s}
\frac{(2n-2s+2)_{2s}}{(n-s+\frac{1}{2})_{2s}}M_{2n+1,j}(-s-1/2).
\end{split}
\end{align}

 (a) Suppose $j=2n+2$. Then, using the expression~\eqref{eq:def-Mij-Lr}, it is easily checked that the right-hand
 side of~\eqref{eq:comblineaire-s-1/2_b_eq} is zero (when specialized to $j=2n+2$), and thus~\eqref{eq:comblineaire-s-1/2_b_eq}
 reduces to the relation
\begin{align}\label{eq:comblineaire-s-1/2_b_eq1_hypidentity}
 \sum_{i=0}^{2n-2s-1}(-2)^{i} {n-s-\frac{1}{2}\choose
i}&=0.
\end{align}
Reversing the order of summation over $i$ (that is we replace $i$ by
$2n-2s-1-i$), the left-hand side
of~\eqref{eq:comblineaire-s-1/2_b_eq1_hypidentity} can be written
using standard hypergeometric notation
\begin{align*}
{}_pF_q\left[{{a_1,\ldots,a_p}\atop
{b_1,\ldots,b_q}};z\right]=\sum_{n\geq0}\frac{(a_1)_n\cdots(a_p)_n}{(b_1)_n\cdots(b_q)_n}\frac{z^n}{n!}
\end{align*}
as
\begin{align*}
(-2)^{2n-2s-1} {n-s-\frac{1}{2}\choose 2n-2s-1}\,
{}_2F_1\left[{{1,-2n+2s+1}\atop
{-n+s+\frac{3}{2}}};\frac{1}{2}\right]
\end{align*}
which is indeed null (recall that $1\leq s\leq n-1$) by means of
Gauss' second ${}_2F_1$-summation
\begin{align*}
{}_2F_1\left[{{a,-N}\atop
{\frac{1}{2}+\frac{a}{2}-\frac{N}{2}}};\frac{1}{2}\right] =\left\{
  \begin{array}{ll}
      0, & \hbox{if $N$ is an odd nonnegative integer,}\\[0.25cm]
     \frac{\left(\frac{1}{2}\right)_{N/2}}{\left(\frac{1-a}{2}\right)_{N/2}},
& \hbox{if $N$ is an even nonnegative integer.}
  \end{array}
 \right.
\end{align*}

(b) Suppose $2n-2s+1\leq j\leq 2n+1$. Then, using the
expressions~\eqref{eq:Qij-s-1/2_simplification_i>} and~\eqref{eq:Qij-s-1/2_annulation} for the matrix
entries in the sum in~\eqref{eq:comblineaire-s-1/2_b_eq},
it is easily checked that the right-hand side of~\eqref{eq:comblineaire-s-1/2_b_eq} vanishes so that~~\eqref{eq:comblineaire-s-1/2_b_eq}
reduces to the identity
\begin{align*}
 \sum_{i=1}^{2n-2s}(-2)^{i-1}\left({n-s-\frac{1}{2}\choose
j}+{n-s+\frac{1}{2}\choose j}\right){2n-2s\choose i}=0,
\end{align*}
which is an immediate consequence of the binomial theorem.\\

(c) Suppose $1\leq j\leq 2n-2s$. Then, by~\eqref{eq:def-Mij}, the
left-hand side of~\eqref{eq:comblineaire-s-1/2_b_eq} is
\begin{align}
\sum_{i=1}^{2n-2s}(-2)^{i-1}
M_{i,j}(-s-1/2)&=\sum_{i=1}^{2n-2s}(-2)^{i-1}\sum_{t=1}^{2n-2s}\frac{j-i}{t}{t\choose
i}{t\choose j}\label{eq:sumQi}\\
&\;\;+\left({n-s-\frac{1}{2}\choose
j}+{n-s+\frac{1}{2}\choose j}\right)\sum_{i=1}^{2n-2s}(-2)^{i-1}{2n-2s\choose i}\nonumber\\
&\;-{2n-2s\choose j}\sum_{i=1}^{2n-2s}
(-2)^{i-1}\left({n-s-\frac{1}{2}\choose i}+{n-s+\frac{1}{2}\choose
i}\right).\nonumber
\end{align}
The first sum in~\eqref{eq:sumQi} is equal to ${2n-2s\choose j}$.
This can be deduced from the formula
\begin{align}\label{eq:sumbinom}
\sum_{i=1}^{N}(-2)^{i-1}\sum_{t=1}^{N}\frac{j-i}{t}{t\choose
i}{t\choose j} &=\frac{1+(-1)^N}{2}\binom{N}{2},
\end{align}
which is valid for any nonnegative integers $N$ and $j$. For $N$
even, \eqref{eq:sumbinom} was proved in~\cite[proof of
equation~5.21]{CiuKrat-interaction}. The proof for $N$ odd is
similar as the case $N$ even and involves
only elementary manipulations. It is thus left to the reader.\\
By the binomial theorem the second sum in~\eqref{eq:sumQi} vanishes.
Moreover, the last sum in~\eqref{eq:sumQi} simplifies to
\begin{align}\label{eq:sumQi_last1}
&\sum_{i=1}^{2n-2s} (-2)^{i-1}\left({n-s-\frac{1}{2}\choose
i}+{n-s+\frac{1}{2}\choose
i}\right)=1-4^{n-s}{n-s-\frac{1}{2}\choose 2n-2s}.
\end{align}
This immediately follows from the telescoping equation
\begin{align*}
(-2)^{i-1}\left({n-s-\frac{1}{2}\choose i}+{n-s+\frac{1}{2}\choose
i}\right)=G(i+1)-G(i)
\end{align*}
with $G(i)=-(-2)^{i-1}{n-s-\frac{1}{2}\choose i-1}$.
 Finally, replacing  the sums in~\eqref{eq:sumQi} by their evaluation,
we arrive at
\begin{align}\label{eq:sumQi_simplification}
\sum_{i=1}^{2n-2s}(-2)^{i-1} M_{i,j}(-s-1/2)&=4^{n-s}{2n-2s\choose
j}{n-s-\frac{1}{2}\choose 2n-2s}.
\end{align}
On the other hand, by~\eqref{eq:Qij-s-1/2_simplification_i>}, the
right-hand side of~\eqref{eq:comblineaire-s-1/2_b_eq} is equal to
\begin{align*}
&-4^{n-s}\frac{(2n-2s+2)_{2s}}{(n-s+\frac{1}{2})_{2s-1}}\binom{2n-2s}{j}\left({n-s-\frac{1}{2}\choose
2n}+{n-s+\frac{1}{2}\choose 2n}\right)\\
&\qquad\qquad\qquad -2n 4^{n-s}
\frac{(2n-2s+2)_{2s}}{(n-s+\frac{1}{2})_{2s}}\binom{2n-2s}{j}\left({n-s-\frac{1}{2}\choose
2n+1}+{n-s+\frac{1}{2}\choose 2n+1}\right).
\end{align*}
Therefore, to prove that~\eqref{eq:comblineaire-s-1/2_b_eq} holds
for $1\leq j\leq 2n-2s$, it suffices to show that the above
expression is equal to the right-hand side
of~\eqref{eq:sumQi_simplification}. This amounts to a routine
verification. This concludes the proof
of~\eqref{eq:comblineaire-s-1/2_b}.
 \qed

\subsubsection{The case $s=n$} From~\eqref{eq:Qij-s-1/2_annulation}
we see that all the coefficients of the $i$-th row, $1\leq i\leq
2n+1$, of the matrix $M(-n-1/2)$ are null except its last
coefficient $M_{i,2n+2}(-n-1/2)$ (which is,
by~\eqref{eq:def-Mij-Lr}, equal to ${-1/2\choose i-1}$). It is then
immediate to find $2n$ linear combinations of the rows of the matrix
$M(-n-1/2)$ that are linearly independent and vanish. For instance,
for $a=1,\ldots,2n$, we have
\begin{align}
&{}\left( \textrm{row $a$ of $M(-n-1/2)$ } \right)-
\frac{M_{a,2n+2}(-n-1/2)}{M_{a+1,2n+2}(-n-\frac{1}{2})}\cdot\left(
\textrm{row $(a+1)$ of $M(-n-1/2)$} \right)=0.\nonumber
\end{align}
 This implies divisibility of $\Pf M(x)$ by~$(x+n+\frac{1}{2})^{n}$.

\subsection{$P(x)=K(x)$ at $x=0,-1,\ldots,-n$}\label{sec:PfM-5}

Let $\s$ be a given integer with $0\leq \s \leq n$. It is not too
hard to evaluate the polynomial $K(x)$, defined by~\eqref{eq:def-Kx}, at~$x=-\s$. Indeed, using the
Pfaff-Saalschutz summation, it is easily checked (see
~\cite[Equation~(5.26)]{CiuKrat-centered}) that
\begin{align}
\sum_{i=0}^n\frac{(-1)^{n-i}}{2n+1-2i}\frac{(n-\s+1-i)_{2i}}{(i!)^2}
=(-1)^{n+1}\frac{\left(n+\frac{3}{2}\right)_{n-\s}}{2\left(-n-\frac{1}{2}\right)_{n+1-\s}},
\quad\text{$0\leq \sigma\leq n$}.
\end{align}
 By inserting in~\eqref{eq:def-Kx} the latter identity, after
some simplification, we arrive  at
\begin{align}\label{eq:Kx at -s}
K(-\s)&=(-1)^{\s}\frac{(4n-2\s+1)!\,(2\s)!\,(2n)!}{(2n-\s)!\,\s!(4n+1)!}\prod_{s=1}^n\frac{(2s-1)!}{(4n+2-2s)!},
\quad\text{$0\leq \sigma\leq n$}.
\end{align}

The evaluation of $P(x)$ at $x=-\s$ is much more delicate. The
polynomial $P(x)$ is defined by means of \eqref{eq:def-Px}. Since the product on
the right-hand side of~\eqref{eq:def-Px}  is zero for $x=-\s$,
$1\le \s\le n$, we should write~\eqref{eq:def-Px} in the form
\begin{align*}
 P(x)=\frac{2^{-\sigma}}{(x+\sigma)^{\sigma}} \Pf M(x)
\;&\prod_{s=1\atop s\neq \sigma}^n(2x+2s)^{-s}\;\prod_{s=1}^n(2x+4n+2-2s)^{-s} \\
&\times\prod_{s=1}^{n}\left(2x+2s+1\right)^{-s}\;\prod_{s=1}^{n-1}\left(2x+4n+1-2s\right)^{-s}
\end{align*}
and subsequently specialize $x=-\sigma$. After some manipulation,
this gives
\begin{align}\label{eq:Px at -s}
\begin{split}
 P(-\s)=\frac{2^{-\sigma}\prod _{s=1}^{n-\s} (2s-1)!}{\prod _{s=1}^{\s-1}
(2s)!\;\prod_{s=n-\s+1}^{2n-\s} (2s)!}
\left(\frac{1}{(x+\sigma)^{\sigma}} \Pf
M(x)\right)\bigg\vert_{x=-\sigma}.
\end{split}
\end{align}
To evaluate $(x+\s)^{-\s} \Pf M(x)$ at $x=-\s$, we shall use the
following lemma due to Ciucu and the second author and used in a similar
context.
\begin{lem}\label{lem:FactorizationPfaffian}{(\cite[Lemma 11]{CiuKrat-interaction})}
Let $N,a,b$ be positive integers with $a< b\le N$, where $N$ and
$b-a$ are even. Let $A=(A_{i,j})_{1\le i,j\le N}$ be a
skew-symmetric matrix with the following properties:
\begin{enumerate}
\item The entries of $A$ are polynomials in $x$.
\item The entries in rows $a+1,a+2,\dots,b$ {\rm(}and, hence, also in
the corresponding columns{\rm)} are divisible by $x+s$.
\end{enumerate}
Then
$$\left(\frac {1} {(x+s)^{(b-a)/2}}\Pf A\right)\bigg\vert_{x=-s}=\Pf \widetilde
A\,\cdot\,\Pf S,$$
 where $\widetilde A$ is the matrix which arises from
$A$ by deleting rows and columns $a+1,a+2,\dots,b$ and subsequently
specializing $x=-s$, and
$$S=\left(
\left(\frac {1} {x+s}A_{i,j}\right)\bigg\vert_{x=-s}\right) _{a+1\le
i,j\le b}.$$
\end{lem}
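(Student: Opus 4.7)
The plan is to argue directly from the combinatorial definition of the Pfaffian. For a skew-symmetric matrix $A$ of even size $N$, one has
\[
\Pf(A)=\sum_{M}(-1)^{c(M)}\prod_{(i,j)\in M}A_{i,j},
\]
where the sum runs over all perfect matchings $M$ of $\{1,2,\dots,N\}$ (each written as a set of pairs $(i,j)$ with $i<j$), and $c(M)$ counts the number of crossings when $M$ is drawn as a system of arcs above the points $1,2,\dots,N$ on a line. Set $I=\{a+1,\dots,b\}$, $J_1=\{1,\dots,a\}$, $J_2=\{b+1,\dots,N\}$, and $J=J_1\cup J_2$.

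The first step is to track the power of $x+s$ contributed by each matching. Let $k(M)$ denote the number of edges of $M$ with one endpoint in $I$ and the other in $J$. Since the remaining $b-a-k(M)$ vertices of $I$ are matched within $I$ and $b-a$ is even by hypothesis, $k(M)$ is also even. The divisibility condition (2) in the lemma says that every edge incident to $I$ contributes a factor $x+s$, so the contribution of $M$ to $\Pf(A)$ is divisible by $(x+s)^{k(M)+(b-a-k(M))/2}=(x+s)^{(b-a)/2+k(M)/2}$. After dividing by $(x+s)^{(b-a)/2}$ and specializing $x=-s$, only matchings with $k(M)=0$ survive. Each such matching decomposes uniquely as $M=M_I\sqcup M_J$, where $M_I$ is a perfect matching of $I$ and $M_J$ is a perfect matching of $J$.

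The key step is the sign factorization $(-1)^{c(M)}=(-1)^{c(M_I)}\,(-1)^{c(M_J)}$, which is where the contiguity of $I$ in $[N]$ becomes essential. An arc of $M_I$ lies entirely inside the interval $I$, whereas an arc of $M_J$ either lies entirely on one side of $I$ (when both endpoints are in $J_1$ or both in $J_2$) or nests completely around $I$ (when one endpoint is in $J_1$ and the other in $J_2$). In either case, an $M_I$-arc cannot cross an $M_J$-arc, so every crossing of $M$ is internal to $M_I$ or internal to $M_J$, which yields the desired factorization. Combining this with $A_{i,j}/(x+s)\big|_{x=-s}=S_{i,j}$ for $(i,j)\in M_I$ and $A_{i,j}\big|_{x=-s}=\widetilde A_{i,j}$ for $(i,j)\in M_J$, we conclude
\[
\lim_{x\to-s}\frac{\Pf(A)}{(x+s)^{(b-a)/2}}=\Biggl(\sum_{M_I}(-1)^{c(M_I)}\prod_{(i,j)\in M_I}S_{i,j}\Biggr)\Biggl(\sum_{M_J}(-1)^{c(M_J)}\prod_{(i,j)\in M_J}\widetilde A_{i,j}\Biggr)=\Pf(S)\,\Pf(\widetilde A).
\]
The only genuine obstacle is the sign factorization in step two; the contiguous-block hypothesis is what turns an otherwise subtle permutation-sign computation into the elementary arc-nesting observation above, after which everything else is bookkeeping.
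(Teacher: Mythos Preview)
Your argument is correct. The paper does not actually prove this lemma: it is quoted verbatim as Lemma~11 of Ciucu--Krattenthaler, \emph{The interaction of a gap with a free boundary in a two dimensional dimer system}, and used as a black box. So there is no ``paper's own proof'' to compare against here.

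That said, your self-contained combinatorial argument is a clean way to see why the statement holds. The two points worth emphasizing (which you handle, but somewhat implicitly) are: (i) an edge with \emph{both} endpoints in $I$ is only guaranteed to carry a single factor of $x+s$, not $(x+s)^2$, which is exactly why the definition of $S$ divides by $x+s$ only once and why your exponent count $k(M)+(b-a-k(M))/2$ is the right one; and (ii) the crossing number depends only on the relative order of the endpoints, so relabeling $I$ as $\{1,\dots,b-a\}$ and $J$ as $\{1,\dots,N-(b-a)\}$ does not change $c(M_I)$ or $c(M_J)$, which is what lets you identify the two factors with $\Pf S$ and $\Pf\widetilde A$ respectively. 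Both are fine, and the contiguity observation for the sign factorization is exactly the heart of the matter.
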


  Let $\s$ be a given integer with $1\leq \s \leq n$. It follows  from~\eqref{eq:comblineaire-s} that the coefficients of the $i$-th row,
  $2n-2\s+2 \leq i \leq  2n+1$, of the matrix $M(x)$ are divisible by $(x+\s)$. Applying Lemma~\ref{lem:FactorizationPfaffian},
 we obtain
\begin{align}\label{eq:factorization_Pfaffian}
\left(\frac {1} {(x+\s)^{\s}}\Pf M(x)\right)\bigg\vert_{x=-\s}
=\Pf \widetilde M\, \cdot\, \Pf S,\end{align}
 where $\Mtil= (\Mtil_{i,j})_{1\leq i,j\leq 2n-2\s+2}$ is the skew-symmetric matrix  of size $2n-2\s+2$ defined by
\begin{align}
\begin{split}\label{eq:def-Mtil}
 \Mtil_{i,j}&=M_{i,j}(-\s),\quad 1\leq i,j\leq 2n-2\s+1,\\
 \Mtil_{i,2n-2\s+2}&=M_{i,2n+2}(-\s),\quad 1\leq i\leq 2n-2\s+1,
 \end{split}
\end{align}
 and $S= (S_{i,j})_{1\leq i,j\leq 2\s}$ is the skew-symmetric matrix  of size $2\s$ defined by
\begin{align}\label{eq:def-MatS}
S=\left( \left(\,\frac {1}
{x+\s}M_{i,j}(x)\right)\bigg\vert_{x=-\s}\,\right)_{2n-2\s+2\le
i,j\le 2n+1}.
\end{align}
  We point out that~\eqref{eq:factorization_Pfaffian} also holds for $\s=0$ once we interpret the Pfaffian of an empty matrix (i.e., the
 Pfaffian of $S$) as $1$. In particular, under that convention, the arguments below can be used for $0\le \s\le n$, that is,
 {\it including} $\s=0$.

 We shall prove that $\Pf \Mtil=1$ and
\begin{align}
&\Pf S=(-1)^{\s} 2^{\s} \left(\prod_{i=1}^{2\s}(2n-2\s+i)! \right)
\left( \prod _{i=1} ^{\s}\frac {(2i-1)!}
{(4n-2\s+2i+1)!}\right).\label{eq:evaluation_PfS}
\end{align}
If we substitute in~\eqref{eq:factorization_Pfaffian} these values
for $\Pf \widetilde M$ and $\Pf S$, and then insert the
 resulted equation in~\eqref{eq:Px at -s}, after
 some simplification, we would arrive at
\begin{align}\label{eq:Pn at -s}
 P(-\s)=(-1)^{\s}\frac{ \prod_{s=2n-2\s+1}^{2n}s!\; \prod _{s=1} ^{\s} (2s-1)!\;\prod _{s=1} ^{n-\s} (2s-1)!}
{ \prod _{s=1} ^{\s-1} (2s)!\;\prod _{s=n+1} ^{2n-\s} (2s)!\;\prod
_{s=0} ^{\s-1}(4n+1-2s)!}, \quad\text{$0\leq \sigma\leq n$}.
\end{align}
Then, a routine comparison of~\eqref{eq:Pn at -s} with~\eqref{eq:Kx at -s}
would show that $P(x)=K(x)$ at $x=0,-1,\ldots,-n$, as desired. So, to
complete our proof, it remains to establish the evaluations
of $\Pf \Mtil$ and $\Pf S$.\\

\noindent\emph{Evaluation of $\Pf \Mtil$.} If we
compare the matrix $\Mtil$ defined by~\eqref{eq:def-Mtil} with the matrix
$M(x)$ in Lemma~\ref{lem:tilings-Pfaffian_casA}, then we see that
$\widetilde M$ is equal to the matrix $M(x)$ in
Lemma~\ref{lem:tilings-Pfaffian_casA}  with $n$ replaced by $n-\s$
and with $x=0$. By Lemma~\ref{lem:tilings-Pfaffian_casA},
  this implies that $\Pf \Mtil$ is equal to the number  of centered vertically symmetric tilings of a $(2(n-\s)+1,0,2(n-\s)+1)$ hexagon.
 In a tiling of such a hexagon, all rhombi are forced, and trivially the unique tiling of such a hexagon is centered and
 symmetric. Consequently, we have $\Pf \Mtil=1$.\\

\noindent\emph{Evaluation of $\Pf S$.} By~\eqref{eq:def-MatS}
and~\eqref{eq:def-Mij}, the $(i,j)$-entry of the matrix $S$
satisfies
 \begin{align*}
S_{i,j}&=\frac{R_{2N+1+i,2N+1+j}(x)}{x+\s}+\frac{T_{2N+1+i,2N+1+j}(x)}{x+\s}
-\frac{T_{2N+1+j,2N+1+i}(x)}{x+\s}\bigg\vert_{x=-\s},
\end{align*}
where we have set $N=n-\s$. Using expression~\eqref{eq:def-Rij-bis}
for the $R_{i,j}$'s and~\eqref{eq:def-Tij} for the $T_{i,j}$'s,
after a routine calculation, we arrive at
\begin{align}
\begin{split}\label{eq:ValSij}
S_{i,j}= 2\cdot \sum _{\ell=0} ^{i+2n-2\s}(-1)^{\ell+j+1} &\frac
{j-i} {i+2n-2\s+1}\binom {j+2n-2\s}{i+2n-2\s-\ell}\\
&\cdot\binom{\ell+j+2n-2\s+1}{\ell} \frac {(2n-2\s+2)!\,(\ell+j-1)!}
{(\ell+j+2n-2\s+2)!}.
\end{split}
\end{align}
To derive the above equation, we just used the following
relations whose proof is left to the reader: for $r,t$ with $r\geq1,
t\geq 0$, and $\epsilon=1,2$ we have
\begin{align}\label{eq:limit1-binom at -s}
\begin{split}
&\lim_{x\to -\s} \frac{1}{x+\s}\binom
{2x+2n+t}{r+2n-2\s+t}=(-1)^{r-1}2\,\frac{(2n-2\s+t)!\,(r-1)!}{(r+2n-2\s+t)!},\\
&\lim_{x\to -\s} \binom {x+n+\epsilon}{r+2n-2\s+1}=0.
\end{split}
\end{align}

\begin{lem}\label{lem:evaluationPfS} 
 \begin{multline*}
\sum _{\ell=0} ^{i+a}(-1)^{\ell+j+1} \frac {j-i} {i+a+1}\binom
{j+a}{i+a-\ell}\binom {\ell+j+a+1}{\ell} \frac
{(a+2)!\,(\ell+j-1)!} {(\ell+j+a+2)!}\\
={{\left( -1 \right) }^{i + j }}\,{\frac {
      ( j-i ) \,(a+i)!\,(a+j)! }
      {(2a+i+j+2)!}}.
\end{multline*}
\end{lem}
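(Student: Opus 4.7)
The plan is to evaluate the sum by converting it to two Beta integrals via a coefficient extraction and a judicious change of variables.

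First, using Euler's Beta integral,
\[
\frac{(a+2)!\,(\ell+j-1)!}{(\ell+j+a+2)!} \;=\; B(a+3,\ell+j) \;=\; \int_0^1 t^{a+2}(1-t)^{\ell+j-1}\,dt,
\]
I interchange summation and integration. Setting $u := 1-t$, the inner sum
\[
\sum_{\ell\ge 0}(-u)^\ell\binom{j+a}{i+a-\ell}\binom{\ell+j+a+1}{\ell}
\]
is evaluated by coefficient extraction: since $\binom{j+a}{i+a-\ell}=[x^{i+a-\ell}](1+x)^{j+a}$ and $\sum_{\ell\ge 0}\binom{\ell+j+a+1}{\ell}(-ux)^\ell = (1+ux)^{-(j+a+2)}$, the inner sum equals $[x^{i+a}](1+x)^{j+a}(1+ux)^{-(j+a+2)}$. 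After the change of variable $s=1-t$, the left-hand side becomes
\[
\frac{(j-i)(-1)^{j+1}}{i+a+1}\,[x^{i+a}](1+x)^{j+a}\int_0^1\frac{s^{j-1}(1-s)^{a+2}}{(1+sx)^{j+a+2}}\,ds.
\]

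The crucial step is the substitution $u=(1-s)/(1+sx)$, inverted by $s=(1-u)/(1+ux)$, which yields $1-s=u(1+x)/(1+ux)$, $1+sx=(1+x)/(1+ux)$, and $ds=-(1+x)(1+ux)^{-2}\,du$. Under it the integral collapses to
\[
(1+x)^{1-j}\int_0^1\frac{u^{a+2}(1-u)^{j-1}}{1+ux}\,du,
\]
so the outer factor reduces to $(1+x)^{a+1}$. A direct binomial expansion of $(1+x)^{a+1}/(1+ux)$ gives
\[
[x^{i+a}]\frac{(1+x)^{a+1}}{1+ux} \;=\; (-1)^{i+1}\,u^{i-1}(1-u)^{a+1}.
\]
Substituting this back, the remaining integral is a single Beta function,
\[
\int_0^1 u^{a+i+1}(1-u)^{a+j}\,du \;=\; B(a+i+2,a+j+1) \;=\; \frac{(a+i+1)!(a+j)!}{(2a+i+j+2)!},
\]
and collecting all signs and using $(a+i+1)!/(i+a+1)=(a+i)!$ produces precisely the claimed right-hand side.

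The main obstacle is identifying the substitution $u=(1-s)/(1+sx)$, which is essential to collapse the doubly-hypergeometric structure into a single Beta integral; without it, the integrand after Step~1 is a disguised ${}_2F_1$ and the computation becomes significantly more intricate. As a backup, one can instead recognize the $s$-integral as $B(j,a+3)\,{}_2F_1(j+a+2,j;j+a+3;-x)$ and apply Euler's transformation ${}_2F_1(A,B;C;z)=(1-z)^{C-A-B}\,{}_2F_1(C-A,C-B;C;z)$ to reduce the top parameter $j+a+2$ to $1$; the same coefficient extraction then delivers the identical Beta integral.
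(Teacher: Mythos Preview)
Your proof is correct. Each step checks out: the Beta integral representation, the coefficient extraction giving $[x^{i+a}](1+x)^{j+a}(1+ux)^{-(j+a+2)}$, the substitution $u=(1-s)/(1+sx)$ (which for real $x$ with $|x|<1$ is a genuine monotone change of variable on $[0,1]$, so the resulting identity holds as analytic functions of $x$ and hence coefficientwise), the closed form $[x^{i+a}](1+x)^{a+1}/(1+ux)=(-1)^{i+1}u^{i-1}(1-u)^{a+1}$ (valid for $i\ge1$, which is the relevant range), and the final Beta evaluation. The signs and the factor $(a+i+1)!/(i+a+1)=(a+i)!$ assemble correctly into the right-hand side.

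The paper does not prove this lemma; it only remarks that the argument from \cite[p.~277]{CiuKrat-interaction} for the special case $a=2n-2\sigma-1$ carries over verbatim. That argument is a hypergeometric manipulation: one writes the sum as a terminating ${}_3F_2$ (the extra linear denominator $\ell+j+a+2$ arising from the factorial ratio supplies the third pair of parameters) and evaluates it by the Pfaff--Saalsch\"utz summation. Your route is genuinely different: instead of identifying a balanced ${}_3F_2$, you unwind the factorial ratio as a Beta integral and use the M\"obius-type substitution $u=(1-s)/(1+sx)$ to collapse the integrand. This trades knowledge of a specific summation theorem for a single clever change of variables, and has the virtue of being entirely self-contained. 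The backup you mention (recognise the $s$-integral as a ${}_2F_1$ and apply Euler's transformation) is essentially the analytic incarnation of that same substitution, so both routes are consistent.
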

 The above result was proved in the particular case $a=2n-2\s-1$ in~\cite[page~277]{CiuKrat-interaction}.
 Since the arguments in~\cite[page~277]{CiuKrat-interaction} can be used in the same way to prove Lemma~\ref{lem:evaluationPfS},
we omit the proof details. Combining the preceding lemma with~\eqref{eq:ValSij}, we arrive at
\begin{align}\label{eq:ValSij-simplification}
S_{i,j}= {\left( -1 \right) }^{i + j }\,{\frac {2\,
      ( j-i ) \,(2n-2\s+i)!\,(2n-2\s+j)! }
      {(4n-4\s+i+j+2)!}}.
\end{align}
Consequently, we have
\begin{align*}
\underset {1\le i,j\le 2\s}\Pf(S_{i,j})&=(-1)^{\s} 2^{\s}
\left(\prod_{i=1}^{2\s}(2n-2\s+i)! \right) \underset {1\le i,j\le
2\s}\Pf\left(\frac{j-i}{(4n-4\s+i+j+2)!}\right),
\end{align*}
 which simplifies to~\eqref{eq:evaluation_PfS}, as desired, by
the Pfaffian evaluation
\begin{align}\label{eq:Mehta-Wang}
\underset {0\leq i,j\leq 2k-1}\Pf\left(\frac {(j-i )}
{(b+i+j)!}\right) =\prod_{i=0}^{k-1}
\frac{(2i+1)!}{(b+2k+2i-1)!},\quad k\geq 1.
\end{align}
Note that the above equation is a slight variation of a Pfaffian
evaluation due to Mehta and Wang (see~\cite[Corollary~10]{CiuKrat-interaction}).
This completes the proof of~\eqref{eq:evaluation_PfS}.  \qed

%
%
\section{Proof of Theorem~\ref{thm:Ncentered(2n,2x+1)}}

 Throughout this section, we assume that $n$ is a (fixed) positive integer.
From~\eqref{eq:def N(a,b)}, it is easily checked that $ST(2n,2x+1,2n)$
can be written in the form
\begin{align*}
ST(2n,2x+1,2n)&=\frac{(x+1)_{2n}}{(\frac{1}{2})_{2n}}
               \prod_{s=1}^{n}\frac{(2x+1+2s)_{4n-4s+1}}{(2s)_{4n-4s+1}}.
\end{align*}
This, combined with Lemma~\ref{lem:tilings-Pfaffian_casB}, leads to
the following reformulation of Theorem~\ref{thm:Ncentered(2n,2x+1)}:
\begin{align}\label{eq:Theorem2_bis}
\Pf N(x)&= \frac{2^{5n-1}}{n!(4n)!}\, (x+1)_{2n} \,
\prod_{s=2}^{n}\frac{(2x+2s)_{4n-4s+3}}{(2s)_{4n-4s+3}}\;U_{n}(x),
\end{align}
 where $N(x)$ is the matrix defined in Lemma~\ref{lem:tilings-Pfaffian_casB} and
 $U_n(x)$ is defined by~\eqref{eq:def-Ux}.
 Our proof of~\eqref{eq:Theorem2_bis} is, in spirit, quite similar to the proof of~\eqref{eq:Theorem1_bis}.

  First, with exactly the same kind of reasoning used in Section~\ref{sec:PfM-1},
it is easily seen that $\Pf N(x)$ is a polynomial in $x$ of degree at
most $2n^2+n-1$. We omit the details. In Section~\ref{sec:PfN-2}, we prove that
\begin{align}\label{eq:inv_shift-Pfaffian_N}
\Pf N(x)=(-1)^{n+1}\Pf N(-2n-1-x).
\end{align}
In Sections~\ref{sec:PfN-3} and~\ref{sec:PfN-4}, we show that $\prod_{s=1}^n(x+s)^{s}$ and
$\prod_{s=1}^{n-1}(x+s+\frac{3}{2})^{s}$ divide $\Pf N(x)$ as a
polynomial in $x$. This, combined with~\eqref{eq:inv_shift-Pfaffian_N}, implies that
\begin{align*}
\left(x+n+\frac{1}{2}\right)^{n-1}
\prod_{s=1}^{n-2}\left(x+s+\frac{3}{2}\right)^{s}
\left(x+2n+1-s-\frac{3}{2}\right)^{s}
\prod_{s=1}^n(x+s)^{s}(x+2n+1-s)^{s},
\end{align*}
 which is equal to
\begin{align}
2^{-2n^2+3n-1}\,(x+1)_{2n}\,\prod_{s=2}^{n}\left(2x+2s\right)_{4n-4s+3}
\end{align}
 and is a polynomial of degree $2n^2-n+1$, divides $\Pf N(x)$ as a polynomial in $x$.
 Altogether, this implies that
\begin{align}\label{eq:def-Tx}
\Pf N(x)&=T(x)\,(x+1)_{2n}\,\prod_{s=2}^{n}\left(2x+2s\right)_{4n-4s+3},
\end{align}
for some  polynomial $T(x)$ in $x$ of degree at most $2n-2$. Therefore, in order to prove~\eqref{eq:Theorem2_bis},
it remains to show that $T(x)$ is equal to the polynomial $L(x)$ defined by
\begin{align}\label{eq:def-Lx}
L(x)&=\frac{2^{5n-1}}{n!(4n)!}\,\prod_{s=2}^{n}\frac{1}{(2s)_{4n-4s+3}}\;U_{n}(x),
\end{align}
with $U_n(x)$ given by~\eqref{eq:def-Ux}. Since $T(x)$ and $L(x)$  are polynomials in $x$ of degree at most $2n-2$,
it suffices to show that~$T(x)=L(x)$ holds for $2n-1$ distinct values of $x$. In Section~\ref{sec:PfN-5},
we determine the values of $T(x)$ and $L(x)$ at $x=-1,\dots,-n$ and consequently show that $T(x)=L(x)$ at $x=-1,\ldots,-n$.
Since, by~\eqref{eq:def-Tx} and~\eqref{eq:inv_shift-Pfaffian_N},  $T(x)=T(-2n-1-x)$ and, by~\eqref{eq:def-Lx}, $L(x)=L(-2n-1-x )$ for any $x$,
this shows at the same time that $T(x)=L(x)$ at $x=-2n,-2n+1,\dots,-n-1$. In total, there
holds that $T(x)=L(x)$ at $2n$ values of~$x$. This would complete the proof of~\eqref{eq:Theorem2_bis}.




\subsection{ $\Pf N(x)=(-1)^{n+1} \Pf N(-2n-1-x)$}\label{sec:PfN-2}
 The proof is quite similar to the proof of~\eqref{eq:inv_shift-Pfaffian} and requires only slight changes.
 Let $\Btil$ be the lower triangular matrix of size $2n+2$ defined by
\begin{align}
\Btil=\left(
    \begin{array}{ccc|rr}
       & & & & \\
      &\left(B_{i,j}\right)_{\substack{1\leq i\leq 2n \\ 1\leq j\leq 2n}} &&0& 0 \\
      &&&& \\\hline
      &0&&-1&0\\
      &0&&0&-1
    \end{array}
  \right),
  \quad\text{with $B_{i,j}=\binom{i-1}{j-1}$ for $1\leq i,j\leq 2n$,}
\end{align}
 and let $N^{(1)}$ be the skew-symmetric matrix (of size $2n+2$) defined by
 $N^{(1)}=\Btil\, N(x)\,{\Btil}^t$.

 We shall prove that the $(i,j)$-entry in $N^{(1)}$ is, up to the sign $(-1)^{i+j}$,
 the $(i,j)$-entry in $N(-2n-1-x)$.
First, from~Remark~\ref{rmk:Mn vs Nn}, the definition of $N^{(1)}$ and
what we have proved in Section~\ref{sec:PfM-2}, we have
\begin{align*}
N^{(1)}_{i,j}=M^{(1)}_{i,j}=(-1)^{i+j}M_{i,j}(-2n-1-x)=(-1)^{i+j}N_{i,j}(-2n-1-x)\quad\text{if
$i,j\neq 2n+1$},
\end{align*}
where $M^{(1)}$ is the matrix defined in Section~\ref{sec:PfM-2}. Furthermore, by definition of the
matrices $N^{(1)}$ and $\Btil$ and~\eqref{eq:Qij_B_lastcolumn}, there
holds for $1\leq i\leq 2n$
\begin{align*}
N^{(1)}_{i,2n+1}&=-\sum_{a = 1}^{i} {i-1 \choose{a-1}} N_{a,2n+2}(x)\\
&=\sum_{a = 1}^{i} {i-1 \choose{a-1}} \left( \binom {x+n}{a}+ \binom
{x+n+1}{a}-\binom {2x+2n+1}{a} \right),
\end{align*}
 which simplifies, by Chu--Vandermonde summation (see e.g.~\eqref{eq:Chu-Van}) and~\eqref{eq:Qij_B_lastcolumn}, to
\begin{align*}
N^{(1)}_{i,2n+1}&=(-1)^i\left(\binom {-x-n}{i}+
\binom{-x-n-1}{i}-\binom {-2x-2n-1}{i}\right)\\
&=(-1)^{i+1}N_{i,2n+1}(-2n-1-x).
\end{align*}

   Summarizing, we have shown that $N^{(1)}_{i,j}=(-1)^{i+j}N_{i,j}(-2n-1-x)$ for all $i,j$.
 With exactly the same reasoning used in Section~\ref{sec:PfM-2}, this leads to~\eqref{eq:inv_shift-Pfaffian_N}.


\subsection{ $\prod_{s=1}^n(x+s)^{s}$ divides $\Pf N(x)$}\label{sec:PfN-3}
Let $s$ be an integer with $1\leq s\leq n$. We claim that
\begin{align}\label{eq:comblineaire-s_N}
&{}\,\textrm{For $2n-2s+2\leq a\leq 2n$, the $a$-th row of the
matrix $N(-s)$ is null,}
\end{align}
 and that
\begin{align}\label{eq:comblineaire-s_2_N}
&\sum_{i=1}^{2n-2s+1}(-1)^{i-1}\left(2^{i-1}-\binom{i-1}{n-s}\right)
\cdot\left( \textrm{row $i$ of $N(-s)$} \right) -\left( \textrm{row
$(2n+1)$ of $N(-s)$ } \right)=0.
\end{align}

 As these are $2s$ linear combinations of the rows of the matrix
$N(x)$ that are linearly independent and vanish at $x=-s$,
this implies divisibility of $\Pf N(x)$ by~$(x+s)^{s}$.\\

\noindent\emph{Proof of~\eqref{eq:comblineaire-s_N}.} Suppose
$2n-2s+2\leq a\leq 2n$. It follows from~Remark~\ref{rmk:Mn vs Nn}
and~\eqref{eq:comblineaire-s} that $N_{a,j}(-s)=0$ if $1\leq j\leq
2n+2$ and $j\neq 2n+1$. Moreover, by~\eqref{eq:Qij_B_lastcolumn}, we
have
$$
N_{a,2n+1}(-s)=\binom {2n-2s+1}{a}-\binom {n-s}{a}-\binom
{n-s+1}{a},
$$
which is clearly zero since $a> 2n-2s+1\geq n-s+1$. \qed\\

\noindent\emph{Proof of~\eqref{eq:comblineaire-s_2_N}.} We have to
show that, for $1\leq j\leq 2n+2$, there holds
\begin{align}\label{eq:comblineaire-s_2_N-bis}
&\sum_{i=1}^{2n-2s+1}(-2)^{i-1}N_{i,j}(-s)
-\sum_{i=1}^{2n-2s+1}(-1)^{i-1}\binom{i-1}{n-s}N_{i,j}(-s)-N_{2n+1,j}(-s)=0.
\end{align}

(a) Suppose $j=2n+2$. By~\eqref{eq:def-Nij-Lr}, we have
$N_{i,2n+2}(-s)=\binom{n-s}{i-1}$ if $i\leq n-s+1$ and~0 otherwise.
Therefore, when $j=2n+2$, the left-hand side
of~\eqref{eq:comblineaire-s_2_N-bis} simplifies to
\begin{align*}
\sum_{i=1}^{n-s+1}
(-2)^{i-1}\binom{n-s}{i-1}-\sum_{i=1}^{n-s+1}(-1)^{i-1}\binom{i-1}{n-s}\binom{n-s}{i-1}=(-1)^{n-s}-(-1)^{n-s}=0,
\end{align*}
as desired. That the first (resp., second) sum in the above relation
is equal to $(-1)^{n-s}$ is immediate from the binomial theorem
(resp., from the fact that the term with index $i=n-s+1$ is the only
one term which is nonzero).\\

(b) Suppose $j=2n+1$. Using the expression~\eqref{eq:Qij_B_lastcolumn} for the matrix entry
$N_{i,2n+1}(-s)$ and noticing that $N_{2n+1,2n+1}(-s)=0$, we see
that~\eqref{eq:comblineaire-s_2_N-bis} reduces to
\begin{align*}
\sum_{i=1}^{2n-2s+1}(-1)^{i-1}\left(2^{i-1}-\binom{i-1}{n-s}\right)\left(\binom{2n-2s+1}{i}-{n-s\choose
i}-{n-s+1\choose i}\right) =0,
\end{align*}
which is immediate from the sum evaluations
\begin{align}
&\sum_{i=1}^{2n-2s+1} (-2)^{i-1}  {2n-2s+1\choose i}
=\sum_{i=1}^{2n-2s+1} (-2)^{i-1}\left({n-s\choose i}+{n-s+1\choose
i}\right) =1,\label{eq:comb-Sa}\\
 &\sum_{i=1}^{2n-2s+1} (-1)^{i-1}\binom{i-1}{n-s}\left({n-s\choose i}+{n-s+1\choose i}\right)
=(-1)^{n-s},\label{eq:comb-Sb}\\
&\sum_{i=1}^{2n-2s+1} (-1)^{i-1}\binom{i-1}{n-s} {2n-2s+1\choose i}
  =(-1)^{n-s}.\label{eq:comb-Sc}
\end{align}
 The two sums in~\eqref{eq:comb-Sa} can be easily evaluated by the binomial theorem.
 The sum in~\eqref{eq:comb-Sb} has only one term which is nonzero, the term with index $i=n-s+1$ which is equal to
 $(-1)^{n-s}$, whence~\eqref{eq:comb-Sb}. In the sum in~\eqref{eq:comb-Sc}, the terms with index~$i$ between~$1$ and
 $n-s$ vanish. By shifting the order of summation over $i$ by $n-s+1$ (that is we replace $i$ by $i+n-s+1$) and then using the relation
 $(-1)^{i}\binom{i+n-s}{n-s}=\binom{-n+s-1}{i}$, this sum becomes
 \begin{align*}
(-1)^{n-s}\sum_{i=0}^{n-s} \binom{-n+s-1}{i}\binom {2n-2s+1}{n-s-i},
\end{align*}
 which is equal, by Chu--Vandermonde summation,  to $(-1)^{n-s}\binom{n-s}{n-s}=(-1)^{n-s}$.\\

 (c) Suppose $2n-2s+2\leq j\leq 2n$. From~\eqref{eq:comblineaire-s_N} and the skew-symmetry of $N(-s)$
 the $j$-th column of $N(-s)$ is null, and thus the relation~\eqref{eq:comblineaire-s_2_N-bis} is clearly true.\\

 (d) Suppose $1\leq j\leq 2n-2s+1$. Using the expressions~\eqref{eq:def-Nij} and~\eqref{eq:Qij_B_lastcolumn}
 for the corresponding matrix entries, we see that~\eqref{eq:comblineaire-s_2_N-bis} reduces to
\begin{align}
\begin{split}\label{eq:reduction_comblineaire-s_2_N/j<=2n}
\sum_{i=1}^{2n-2s+1} (-1)^{i-1}&\left(2^{i-1}-\binom{i-1}{n-s}\right)\left(R_{i,j}(-s)+T_{i,j}(-s)-T_{j,i}(-s)\right)\\
 &\qquad+{2n-2s+1\choose j}-{n-s\choose j}-{n-s+1\choose j} =0,
\end{split}
\end{align}
 where $R_{i,j}(x)$ and $T_{i,j}(x)$ are defined as in~\eqref{eq:def-Rij} and~\eqref{eq:def-Tij}.

 From the expression~\eqref{eq:def-Tij} for $T_{i,j}(x)$ and~\eqref{eq:comb-Sa}--\eqref{eq:comb-Sc} we have
\begin{align}
 \begin{split}\label{eq:comb-Sd}
&\sum_{i=1}^{2n-2s+1}
(-1)^{i-1}\left(2^{i-1}-\binom{i-1}{n-s}\right)\left(T_{i,j}(-s)-T_{j,i}(-s)\right)\\
&\quad=\left(1-(-1)^{n-s}\right)\bigg{(} {n-s\choose
j}+{n-s+1\choose j}  -{2n-2s+1\choose j} \bigg{)}.
 \end{split}
\end{align}
From~\eqref{eq:sumbinom} we have
\begin{align}\label{eq:comb-Se}
\sum_{i=1}^{2n-2s+1} (-2)^{i-1}R_{i,j}(-s)&=\sum_{i=1}^{2n-2s+1}
(-2)^{i-1} \sum_{t=1}^{2n-2s+1}\frac{j-i}{t}{t\choose j}{t\choose
i}=0.
\end{align}
From the expression~\eqref{eq:def-Rij} for $R_{i,j}(x)$, we have
\begin{align*}
&\sum_{i=1}^{2n-2s+1}(-1)^{i-1} \binom{i-1}{n-s}R_{i,j}(-s)\\
&=\sum_{i=1}^{2n-2s+1}(-1)^{i-1}\binom{i-1}{n-s}\sum_{t=1}^{2n-2s+1}\binom{t-1}{j-1}\binom{t}{i}-\binom{t}{j}\binom{t-1}{i-1}\\
 &=\sum_{t=1}^{2n-2s+1}{t-1\choose j-1}\sum_{i=1}^{t}(-1)^{i-1} \binom{t} {i} \binom{i-1}{n-s}
 -\sum_{t=1}^{2n-2s+1}\binom{t}{j}\sum_{i=1}^{t} (-1)^{i-1}
 \binom{t-1}{i-1}\binom{i-1}{n-s},
\end{align*}
 where the last equality follows by interchanging the sum over $i$  with the sum over $t$.
 In the latter expression, the first inner sum is zero if $t\leq n-s$ (because of the binomial coefficient
 $\binom{i-1}{n-s}$) and is equal to $(-1)^{n-s}$ if $t>n-s$ (see~\eqref{eq:comb-Sc} and its proof). The second inner sum is
 $(-1)^{n-s}$ if $t=n-s+1$ and 0 otherwise, as it can be checked by using the relation
 $\binom{t-1}{i-1}\binom{i-1}{n-s}=\binom{t-1}{n-s}\binom{t-1-(n-s)}{i-1-(n-s)}$ and the binomial theorem.
 Altogether, this implies that
\begin{align*}
\sum_{i=1}^{2n-2s+1}(-1)^{i-1}
\binom{i-1}{n-s}R_{i,j}(-s)&=(-1)^{n-s}\sum_{t=n-s+1}^{2n-2s+1}{t-1\choose
j-1}-(-1)^{n-s} \binom{n-s+1}{j}\nonumber\\
&=(-1)^{n-s}\left({2n-2s+1\choose j}-{n-s\choose j}
-\binom{n-s+1}{j}\right),
\end{align*}
where the last equality follows from~\eqref{eq:BinomialSum}. This,
combined with~\eqref{eq:comb-Sd} and ~\eqref{eq:comb-Se}, leads
to~\eqref{eq:reduction_comblineaire-s_2_N/j<=2n}. This completes the
proof of~\eqref{eq:comblineaire-s_2_N-bis}.\qed


\subsection{ $\prod_{s=1}^{n-1}(x+s+3/2)^{s}$ divides $\Pf N(x)$}\label{sec:PfN-4}
Let $s$ be an integer with  $1\leq s\leq n-1$. We claim that for
$a=1,2,\ldots,2s$, we have
\begin{align}
\begin{split}\label{eq:comblineaire-s-3/2/N}
 &(-1)^{a}{n+s-\frac{1}{2}\choose a+1}\cdot\left( \textrm{row $(2n-1-a)$ of $N(-s-3/2)$ } \right)\\
 &\qquad-\left(n+s-\frac{1}{2}\right) {2n-1\choose a}\cdot \left(\textrm{row $2n-1$ of $N(-s-3/2)$ } \right)\\
 &\qquad\qquad-a {2n\choose a+1}\cdot\left( \textrm{row $(2n)$ of $N(-s-3/2)$ } \right)=0.
\end{split}
\end{align}
As these are $2s$ linear combinations of the rows of the matrix
$N(x)$ that are linearly independent and vanish at $x=-s-3/2$, this
implies divisibility of $\Pf N(x)$ by~$(x+s+\frac{3}{2})^{s}$.\\

\noindent\emph{Proof of~\eqref{eq:comblineaire-s-3/2/N}.} Suppose
$1\leq a\leq 2s$. We have to show that, for $1\leq j\leq 2n+2$,
\begin{align}
\begin{split}\label{eq:Comb-s-3/2-bis}
&(-1)^{a}{n+s-\frac{1}{2}\choose a+1}N_{2n-1-a,j}(-s-3/2)\\
 &\quad-\left(n+s-\frac{1}{2}\right) {2n-1\choose a}N_{2n-1,j}(-s-3/2)
 -a {2n\choose a+1}N_{2n,j}(-s-3/2)=0.
 \end{split}
\end{align}

 (a) Suppose $2n+1\leq j \leq 2n+2$. It follows from~\eqref{eq:Qij_B_lastcolumn} that
 $$N_{i,2n+1}(-s-3/2)=-\binom{n-s-3/2}{i}-\binom{n-s-1/2}{i}\quad\text{if $2n-2s-1\leq i\leq 2n$.}$$
 Furthermore, by~\eqref{eq:def-Nij-Lr}, we have $N_{i,2n+2}(-s-3/2)=\binom{n-s-3/2}{i-1}$.
 Therefore,~\eqref{eq:Comb-s-3/2-bis} when specialized to $j=2n+1$ and $j=2n+2$ reduces to, respectively, the identities
 \begin{align}
 \begin{split}\label{eq:comblineaire-s-3/2/N/j=2n+1}
&(-1)^{a+1}{n+s-\frac{1}{2}\choose a+1}\left(\binom{n-s-3/2}{2n-1-a}+\binom{n-s-1/2}{2n-1-a}\right)\\
&\qquad+\left(n+s-\frac{1}{2}\right) {2n-1\choose a}\left(\binom{n-s-3/2}{2n-1}+\binom{n-s-1/2}{2n-1}\right)\\
 &\qquad\qquad+a{2n\choose a+1}\left(\binom{n-s-3/2}{2n}+\binom{n-s-1/2}{2n}\right)=0,
\end{split}\\[0.5cm]
\begin{split}\label{eq:comblineaire-s-3/2/N/j=2n+2}
 &(-1)^{a}{n+s-\frac{1}{2}\choose a+1}{n-s-\frac{3}{2}\choose 2n-a-2}\\
 &\qquad-\left(n+s-\frac{1}{2}\right) {2n-1\choose a}{n-s-\frac{3}{2}\choose 2n-2}-a{2n\choose a+1}{n-s-\frac{3}{2}\choose 2n-1} =0.
\end{split}
\end{align}
The proof of these relations amounts to a routine verification and so is left to the reader.\\

 (b) Suppose $2n-2s-1\leq j\leq 2n$. Combining Remark~\ref{rmk:Mn vs Nn} with~\eqref{eq:Qij-s-1/2_annulation} and the skew-symmetry of
 $N(-s-3/2)$,
 we see that the $j$-th column of $N(-s-3/2)$ is null, and thus~\eqref{eq:Comb-s-3/2-bis} is clearly true.\\

 (c) Suppose $1\leq j\leq 2n-2s-2$. It follows from Remark~\ref{rmk:Mn vs Nn} and~\eqref{eq:Qij-s-1/2_simplification_i>} that
 $$
 N_{i,j}(-s-3/2)=-{2n-2s-2\choose j}\left({n-s-\frac{3}{2}\choose i}+{n-s-\frac{1}{2}\choose i} \right)
 $$
 for $2n-2s-1\leq i \leq 2n$. Inserting~this in~\eqref{eq:Comb-s-3/2-bis} and then
 dividing both sides of the resulted equality by ${2n-2s-2\choose j}$ gives
 the relation~\eqref{eq:comblineaire-s-3/2/N/j=2n+1}, which amounts to a routine verification.
 This completes the proof of~\eqref{eq:Comb-s-3/2-bis}.\qed

\subsection{ $T(x)=L(x)$ at $x=-1,\ldots,-n$}\label{sec:PfN-5}

Let $\s$ be a given integer with $1\leq \s \leq n$. It is easy to
evaluate $L(x)$ at~$x=-\s$. After a routine
calculation, we obtain
\begin{align}\label{eq:Ln at -s}
L(-\s)&=\frac{2^{5n-1}}{n!(4n)!}\,
\prod_{s=2}^{n}\frac{1}{(2s)_{4n-4s+3}}\left(\frac{3}{2}-\s\right)_{2n-1}
\big( (-1)^{\s-1}(2n-1)!!+(2n)!! \big).
\end{align}

The evaluation of $T(x)$ at $x=-\s$ is much more delicate. For the
same reason invoked in Section~\ref{sec:PfM-5}, we should
write~\eqref{eq:def-Tx} in the form
\begin{align*}
 T(x)=2^{2n-\sigma}\frac{1}{(x+\sigma)^{\sigma}} \Pf N(x)
\;&\prod_{s=1\atop s\neq \sigma}^n(2x+2s)^{-s}\;\prod_{s=1}^n(2x+4n+2-2s)^{-s}\\
&\times\prod_{s=1}^{n-1}\left(2x+3+2s\right)^{-s}\;\prod_{s=1}^{n-2}\left(2x+4n-1-2s\right)^{-s}.
\end{align*}
and subsequently specialize $x=-\sigma$. After some manipulation,
this leads to
\begin{align}
\begin{split}\label{eq:Tn at -s}
 T(-\sigma)=\left(\frac{1}{(x+\sigma)^{\sigma}} \Pf N(x)\right)\bigg\vert_{x=-\sigma}
\;&(-1)^{\s-1}2^{3-\s}\frac{(2\s-3)!\,(4n-2\s-1)!}{(\s-2)!\,(2n-\s-1)!}\\
&\times\frac{\prod_{s=1}^{n-\s}(2s-1)!}{\prod_{s=1}^{\s-1}(2s)!\;\prod_{s=n-\s+1}^{2n-\s}(2s)!}.
\end{split}
\end{align}

  Let $B(x)=\big(B_{i,j}(x)\big)_{1\leq i,j\leq 2n+2}$ denote the  matrix obtained from $N(x)$ by first adding
 $$\sum_{i=1}^{2n-2\s+1}(-1)^{i}\left(2^{i-1}-\binom{i-1}{n-\s}\right)\cdot\left( \textrm{row $i$ of $N(x)$} \right)$$
to row $2n+1$, and then, adding
$$\sum_{i=1}^{2n-2\s+1}(-1)^{i}\left(2^{i-1}-\binom{i-1}{n-\s}\right)\cdot\left( \textrm{column $i$ of $N(x)$} \right)$$
to column $2n+1$.
  Of course, we have  $\Pf B(x)=\Pf N(x)$, and it follows from~\eqref{eq:comblineaire-s_N} and~\eqref{eq:comblineaire-s_2_N} that
the $i$-th row, $2n-2 \sigma+2 \leq i \leq 2n+1$, of $B(-\sigma)$ is
null, or equivalently, $(x+\s)$ is a factor
 of each entry in the $i$-th row in matrix $B(x)$. Applying Lemma~\ref{lem:FactorizationPfaffian} to the matrix $B(x)$, we obtain
\begin{align}\label{eq:factorization_Pfaffian_N}
\left(\frac {1} {(x+\s)^{\s}}\Pf N(x)\right)\bigg\vert_{x=-\s}
=\left(\frac {1} {(x+\s)^{\s}}\Pf B(x)\right)\bigg\vert_{x=-\s}
=\Pf \widetilde B\,\cdot\,\Pf D,
\end{align}
 where $\widetilde B$ is the matrix which arises from $B(x)$ by deleting rows and columns $2n-2\s+2,2n-2\s+3,\ldots,2n+1$ and
subsequently specializing $x=-\s$, and
\begin{align}\label{eq:def-D}
D=\left( \left(\frac {1}
{x+\s}B_{i,j}(x)\right)\bigg\vert_{x=-\s}\right) _{2n-2\s+2\le
i,j\le 2n+1}.
\end{align}
  By Remark~\ref{rmk:Mn vs Nn} and the definition of the matrix $B(x)$, we have $\widetilde B=\widetilde M$ where $\widetilde M$ is
 defined by~\eqref{eq:def-Mtil}. We have shown in Section~\ref{sec:PfM-5} that $\Pf \widetilde M=1$, and thus $\Pf \widetilde B =1$.
 The evaluation of $\Pf D$ is much more complicated and so is postponed to the next section to simplify the readability of the paper.
\begin{lem}\label{lem:Pf(S)_N}
 \begin{align*}
\Pf D&= (-1)^{\s-1}
2^{5n-2}\frac{(\s-1)!\,(n-\s)!\,(2n-\s)!}{n!^2}\,
\big((2n)!!+(-1)^{\s+1}(2n-1)!!\big) \\
& \qquad\qquad\times\left(\frac{3}{2}-\s\right)_{2n-1}\,
\frac{\prod_{s=1}^{\s-2}(2s)! \big(\prod _{s=n-\s+1}
^{n}(2s)!\big)^2}{\prod_{s=2n-\s}^{2n}(2s)!}.
\end{align*}
\end{lem}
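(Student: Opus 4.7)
The matrix $D$ is $2\sigma\times 2\sigma$, and by Remark~\ref{rmk:Mn vs Nn} together with the definition of $B(x)$, its sub-block on the first $2\sigma-1$ indices (that is, rows/columns $2n-2\sigma+2,\dots,2n$) coincides with the corresponding block of the matrix $S$ from Section~\ref{sec:PfM-5}: indeed, $N(x)=M(x)$ except in row and column $2n+1$, and $B(x)$ only further modifies that same row and column. Consequently, the inner $(2\sigma-1)\times(2\sigma-1)$ block of $D$ is already known in closed form through~\eqref{eq:ValSij-simplification}, and only the last row/column of $D$ carries genuinely new information. My plan is therefore to (i) compute the entries of the last column of $D$ explicitly, (ii) expand $\Pf D$ along that last row/column, (iii) evaluate each resulting sub-Pfaffian as a one-index-removed variant of the Mehta--Wang Pfaffian~\eqref{eq:Mehta-Wang}, and (iv) sum the contributions and simplify to the product claimed in the lemma.

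For step~(i), unravelling the definition of $B(x)$ gives, for $2n-2\sigma+2\le i\le 2n$,
\[
B_{i,2n+1}(x)=N_{i,2n+1}(x)+\sum_{k=1}^{2n-2\sigma+1}(-1)^k\left(2^{k-1}-\binom{k-1}{n-\sigma}\right)N_{i,k}(x),
\]
and the entry $D_{i,2\sigma}$ is the limit $\lim_{x\to-\sigma}B_{i,2n+1}(x)/(x+\sigma)$, with the diagonal entry $D_{2\sigma,2\sigma}$ equal to zero by skew-symmetry. Using the explicit formulas~\eqref{eq:def-Nij} and~\eqref{eq:Qij_B_lastcolumn}, the limiting identities~\eqref{eq:limit1-binom at -s} (with first-order expansions whenever a double zero appears), and the sum evaluations~\eqref{eq:comb-Sa}--\eqref{eq:comb-Sc} already established in Section~\ref{sec:PfN-3}, this limit can be brought into closed form.

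For steps~(ii) and~(iii), the standard Pfaffian row expansion
\[
\Pf D=\sum_{k=1}^{2\sigma-1}(-1)^{k+1}D_{k,2\sigma}\,\Pf D^{(\hat k,\widehat{2\sigma})}
\]
expresses $\Pf D$ as a linear combination of Pfaffians of the $(2\sigma-2)\times(2\sigma-2)$ matrices obtained from $D$ by deleting rows and columns $k$ and $2\sigma$. After pulling out the diagonal factor $(-1)^i(2n-2\sigma+i)!$ coming from~\eqref{eq:ValSij-simplification}, each such sub-Pfaffian reduces to a Pfaffian of the form $\Pf\!\bigl((j-i)/(c+i+j)!\bigr)$ with one index suppressed. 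This one-index-removed variant of~\eqref{eq:Mehta-Wang} should follow either from~\eqref{eq:Mehta-Wang} itself by a short reindexing/recurrence argument, or, if necessary, from a direct evaluation using~\eqref{eq:pf-det} and a suitable determinant formula.

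The main obstacle is step~(iv): combining the explicit entries from step~(i) with the sub-Pfaffian evaluations from step~(iii) into the compact product stated in the lemma. The appearance in the answer of the combination $(2n)!!+(-1)^{\sigma+1}(2n-1)!!$ and of the Pochhammer symbol $\left(\tfrac{3}{2}-\sigma\right)_{2n-1}$ strongly suggests that the resulting sum will split according to the parity of the summation index $k$ and will require, for its closed-form evaluation, a hypergeometric summation of Gauss's second ${}_2F_1$ type (as already exploited in Section~\ref{sec:PfM-4}) or of Pfaff--Saalsch\"utz type, followed by careful bookkeeping of signs and factorials that must cancel to produce the stated product. This hypergeometric closure, rather than any single conceptual step, is what will make the proof technically demanding.
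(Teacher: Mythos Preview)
Your outline matches the paper's approach in structure: the paper, too, identifies the inner $(2\sigma-1)\times(2\sigma-1)$ block of $D$ with the matrix $S$ (Lemma~\ref{lem:matrixD}), computes the last row $D_{2\sigma,j}$ explicitly (equation~\eqref{eq:Dij-lastrow}), and then reduces $\Pf D$ to a single weighted sum over that last row. What you describe as steps~(ii)--(iii) the paper packages as Proposition~\ref{prop:PerturbedMehtaWang}: a column operation with carefully chosen coefficients $\lambda_j$ (your would-be sub-Pfaffians, up to normalisation) that annihilates the first column above the last row, so that the Pfaffian collapses to a single Mehta--Wang sub-Pfaffian times $\sum_j D_{2\sigma,j}\lambda_j$.

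Two points where you underestimate the work. First, in step~(iii) the one-index-removed Mehta--Wang Pfaffians are \emph{not} products obtainable by a short reindexing from~\eqref{eq:Mehta-Wang}; they are the sums $\lambda_j$ of~\eqref{eq:defLambda}, and deriving them requires essentially the content of Proposition~\ref{prop:PerturbedMehtaWang} (a contiguous-relation argument to verify that the chosen $\lambda_j$ kill the relevant column). Second, step~(iv) is considerably harder than a single Gauss or Pfaff--Saalsch\"utz summation. The paper first has to simplify the raw expression~\eqref{eq:Dij-lastrow} for $D_{2\sigma,j}$ into the form~\eqref{eq:simplification-Dij}, a lengthy computation involving contour-integral representations and telescoping. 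Only then does the sum $V(N,\sigma)$ split into four pieces (equation~\eqref{eq:decompo-Vns}), each of which is a separate multisum identity (Lemma~\ref{lem:multisum}); their proofs use Chu--Vandermonde and Bailey's summation for the easy ones, but the two hardest require the Gosper--Zeilberger algorithm to produce recurrences, Watson's ${}_3F_2$-summation, and a further Pfaff--Saalsch\"utz evaluation. So your plan is correct but the ``careful bookkeeping'' you anticipate is in fact the bulk of Section~5.
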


  If we substitute in~\eqref{eq:factorization_Pfaffian_N} the values obtained for $\Pf \widetilde B$ and $\Pf D$, and then insert the
 obtained result in~\eqref{eq:Tn at -s}, it is easy to check that $T(-\s)$ is equal to the right-hand member of~\eqref{eq:Ln at -s}.
 Consequently, we have $T(-\s)=L(-\s)$, as desired.

%
%
\section{Evaluation of $\Pf D$: Proof of Lemma~\ref{lem:Pf(S)_N}}
This section is dedicated to the evaluation of the Pfaffian of the
matrix $D$ defined by~\eqref{eq:def-D}.  We begin by describing more
explicitly the entries of the matrix $D$.  The next section provides
an efficient way to compute the Pfaffian of a skew-symmetric matrix
which differs from a Mehta-Wang matrix of even size only in its last
row and column. A more precise statement is given in
Proposition~\ref{prop:PerturbedMehtaWang}. This allows us to write
$\Pf D$ in the form of a multisum, which is evaluated in the last
subsection.

Throughout this section, $\s$ and $n$ are positive integers with $1\leq \s \leq n$. For the sake of simplicity,
we also set $N=n-\s$.

\subsection{The entries of the matrix $D$}
\begin{lem}\label{lem:matrixD}
  The matrix $D=(D_{i,j})_{1\leq i,j\leq 2\s}$ defined by~\eqref{eq:def-D} is a skew-symmetric matrix of size $2\s$.
 For $1\leq i,j\leq 2\s-1$, we have
 \begin{align}\label{eq:Dij}
D_{i,j} &={{\left( -1 \right) }^{i + j }}\,{\frac {2\, ( j-i )
\,(2N+i)!\,(2N+j)! }
           {(4N+i+j+2)!}}.
\end{align}
For $1\leq j\leq 2\s-1$, we have
\begin{multline}\label{eq:Dij-lastrow}
D_{2\s,j}=
\frac {(-1)^{j}2} {\binom {2N+j+1}{j-1}}\sum _{i=0} ^{2N}(-1)^i\left(2^i-\binom i{N}\right)\frac {2N+j-i} {i+1}\\
\cdot\sum _{\ell=0} ^{i}(-1)^\ell \binom {2N+j}{i-\ell}\binom {\ell+j-1}\ell\frac {1} {2N+\ell+j+2}\\
+\frac {(-1)^{N+j}} {(N+1)\binom
{2N+j+1}{j-1}}+\frac{(-1)^{j}(j-1)N!(N+j-1)!}{(2N+j+1)!}.
\end{multline}
\end{lem}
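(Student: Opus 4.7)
The plan is to treat \eqref{eq:Dij} and \eqref{eq:Dij-lastrow} separately, since the entries of $D$ fall into two qualitatively different types: those that already appeared (in essence) in the proof of Theorem~\ref{thm:Ncentered(2n+1,2x)}, and those in the last row/column that are produced by the row operation used to define $B(x)$.

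For \eqref{eq:Dij}, I would argue that when $1\le i,j\le 2\s-1$ the associated indices $2n-2\s+1+i$ and $2n-2\s+1+j$ both lie in $\{2n-2\s+2,\ldots,2n\}$. The row and column operations producing $B(x)$ from $N(x)$ alter only row and column $2n+1$, so $B_{i',j'}(x)=N_{i',j'}(x)$ at these positions, and Remark~\ref{rmk:Mn vs Nn} further gives $N_{i',j'}(x)=M_{i',j'}(x)$. Hence $D_{i,j}$ coincides with the entry $S_{i,j}$ already computed in the proof of Theorem~\ref{thm:Ncentered(2n+1,2x)}, and formula~\eqref{eq:ValSij-simplification} rewritten with $N=n-\s$ is precisely~\eqref{eq:Dij}.

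For \eqref{eq:Dij-lastrow}, set $N=n-\s$ and $j'=2N+1+j$, so $2N+2\le j'\le 2n$. The definition of $B(x)$ gives
\begin{equation*}
B_{2n+1,j'}(x)=N_{2n+1,j'}(x)+\sum_{i=1}^{2N+1}(-1)^i\Bigl(2^{i-1}-\tbinom{i-1}{N}\Bigr)M_{i,j'}(x),
\end{equation*}
and I would then decompose $M_{i,j'}=R_{i,j'}+T_{i,j'}-T_{j',i}$ as in~\eqref{eq:def-Nij}, while using skew-symmetry together with \eqref{eq:Qij_B_lastcolumn} to write $N_{2n+1,j'}(x)=-\binom{2x+2n+1}{j'}+\binom{x+n}{j'}+\binom{x+n+1}{j'}$. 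The inequality $j'\ge 2N+2$ makes $\binom{2N+2}{\ell+j'+1}$, $\binom{2N+1}{j'}$, $\binom{N}{j'}$ and $\binom{N+1}{j'}$ vanish, so every summand is divisible by $(x+\s)$ and the limit $D_{2\s,j}=\lim_{x\to-\s}B_{2n+1,j'}(x)/(x+\s)$ is finite. I would then evaluate the four pieces in turn. The $R$-part is computed via \eqref{eq:def-Rij-bis} and the first line of \eqref{eq:limit1-binom at -s} (with $t=2$, $r=\ell+j$); a shift $i\to i+1$ and the identity $\binom{2N+j+1}{j-1}=(2N+j+1)!/[(j-1)!(2N+2)!]$ turn it into the main double sum in~\eqref{eq:Dij-lastrow}. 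The $T_{i,j'}$ and $T_{j',i}$ pieces are handled by factoring out the vanishing linear factor at $x=-\s$ from $\binom{x+n}{j'}$, $\binom{x+n+1}{j'}$ and $\binom{2x+2n+1}{j'}$; the resulting sums over $i$ become $\sum_{i=1}^{2N+1}(-1)^i(2^{i-1}-\binom{i-1}{N})\binom{2N+1}{i}$ and $\sum_{i=1}^{2N+1}(-1)^i(2^{i-1}-\binom{i-1}{N})[\binom{N}{i}+\binom{N+1}{i}]$, both of which equal $(-1)^N-1$ by \eqref{eq:comb-Sa}--\eqref{eq:comb-Sc}. The $N_{2n+1,j'}$ piece is evaluated by the same three binomial factorizations and yields the ``plain'' contribution $\tfrac{(-1)^j}{(N+1)\binom{2N+j+1}{j-1}}+\tfrac{(-1)^{N+j}(j-1)N!(N+j-1)!}{(2N+j+1)!}$.

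Assembling the four pieces, the $(-1)^N-1$ coefficients from the $T$- and $T^T$-sums combine with the $+1$ in the $N_{2n+1,j'}$ summands to give total multipliers $(-1)^N$; this converts $(-1)^{N+j}$ into $(-1)^j$ in the factorial term and $(-1)^j$ into $(-1)^{N+j}$ in the reciprocal-binomial term, producing exactly the second and third summands of~\eqref{eq:Dij-lastrow}. The hard part will be the bookkeeping rather than any new idea: four algebraically distinct contributions must combine with compatible signs, parities of $N$, and Pochhammer ratios. However, the entire calculation uses only~\eqref{eq:def-Rij-bis}, the limit formulas in~\eqref{eq:limit1-binom at -s}, and the sum identities~\eqref{eq:comb-Sa}--\eqref{eq:comb-Sc}, so no machinery beyond what is already established in Sections~3 and~4 is required.
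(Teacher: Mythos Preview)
Your proposal is correct and follows essentially the same route as the paper. For \eqref{eq:Dij} you invoke exactly the identification $D_{i,j}=S_{i,j}$ via Remark~\ref{rmk:Mn vs Nn} that the paper uses; for \eqref{eq:Dij-lastrow} the paper writes $D_{2\s,j}$ as the limit of $\big(N_{2n+1,2N+1+j}(x)+\sum_{i}(-1)^i(2^{i-1}-\tbinom{i-1}{N})N_{i,2N+1+j}(x)\big)/(x+\s)$, expands via \eqref{eq:def-Rij-bis} and \eqref{eq:def-Tij}, applies the limit formulas in \eqref{eq:limit1-binom at -s}, and then simplifies the $i$-sums with \eqref{eq:comb-Sa}--\eqref{eq:comb-Sc} --- precisely your decomposition into $R$, $T$, $-T^{t}$ and $N_{2n+1,j'}$ pieces, with the same sign combination $1+((-1)^N-1)=(-1)^N$ at the end.
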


\begin{proof}
Combining the definitions of the matrices $D$
and $B(x)$ (see~\eqref{eq:def-D} and above it) with Remark~\ref{rmk:Mn vs Nn}, we see that the
$(i,j)$-entry of $D$ is for $1\leq i,j\leq 2\s-1$ equal
to the $(i,j)$-entry of the matrix $S$ defined in
\eqref{eq:def-MatS}. Equation~\eqref{eq:Dij} then follows
from~\eqref{eq:ValSij-simplification}.

Recall that $N=n-\s$ and suppose $1\leq j\leq 2\s-1$. Then, by~\eqref{eq:def-D} and the definition of the matrix $B(x)$,
we have
\begin{align}\label{eq:Dij-lastrow-pf}
&D_{2\s,j}=\left(\frac {N_{2n+1,2N+1+j}(x)} {x+\s}
         +\sum_{i=1}^{2N+1}
                (-1)^{i}\left(2^{i-1}-\binom{i-1}{N}\right)
                 \frac {N_{i,2N+1+j}(x)} {x+\s}\right)\bigg\vert_{x=-\s}.
\end{align}

 It is a routine matter to derive from~\eqref{eq:Qij_B_lastcolumn} that
\begin{align*}
 \frac{N_{2n+1,2N+1+j}(x)}{x+\s}\bigg\vert_{x=-\s}&=-\frac{\binom{2x+2N+2\s+1}{2N+1+j}}{x+\s}+\frac{\binom{x+N+\s}{2N+1+j}}{x+\s}
+\frac{\binom{x+N+\s+1}{2N+1+j}}{x+\s}\bigg\vert_{x=-\s}\\
 &=(-1)^{j}2\frac{(2N+1)!(j-1)!}{(2N+j+1)!}+(-1)^{N+j}\frac{(N+j)!N!}{(2N+j+1)!}\\
 &\quad +(-1)^{N+j-1}\frac{(N+1)!(N+j-1)!}{(2N+j+1)!}.
\end{align*}
 Similarly, combining~\eqref{eq:def-Nij} with~\eqref{eq:def-Tij} and~\eqref{eq:def-Rij-bis},
 after a straightforward calculation, we obtain for $1\leq i \leq 2N+1$
\begin{align*}
&\frac{N_{i,2N+1+j}(x)}{x+\s}\bigg\vert_{x=-\s}\\
&=\sum_{\ell=0}^{i-1}\frac{2N+1+j-i}{i}{2N+j\choose
i-1-\ell}
{\ell+2N+1+j\choose \ell}(-1)^{\ell+j-1}2\frac{(2N+2)!(\ell+j-1)!}{(2N+j+2+\ell)!}\\
 &\quad+{2N+1\choose i}
\bigg((-1)^{N+j}\frac{(N+j)!N!}{(2N+j+1)!}+(-1)^{N+j-1}\frac{(N+1)!(N+j-1)!}{(2N+j+1)!}
\bigg)\nonumber\\
 &\quad-(-1)^{j-1}2\frac{(2N+1)!(j-1)!}{(2N+j+1)!}
\left({N\choose i}+{N+1\choose i}\right).\nonumber
\end{align*}
Plugging the last two equalities into~\eqref{eq:Dij-lastrow-pf} and then using
~\eqref{eq:comb-Sa}--\eqref{eq:comb-Sc}, we obtain after some
simplification~\eqref{eq:Dij-lastrow}.
\end{proof}

\subsection{The Pfaffian of a perturbed Mehta-Wang matrix}

\begin{prop}\label{prop:PerturbedMehtaWang}
Let $s,R$ be positive integers and $A=\left(a_{i,j}\right)_{1\leq
i,j\leq 2s}$ be a skew-symmetric matrix such that, for $1\leq
i,j\leq 2 s-1$,
\begin{align*}\label{eq:defAij}
a_{i,j} &=\,\frac {( j-i )} {(R+i+j)!}.
\end{align*}
Then, the Pfaffian of $A$ satisfies the relation
\begin{align}
\Pf A&= -2^{s-1}(s-1)!\left(\sum_{j=1}^{2s-1} a_{2s,j}\cdot
\lambda_{j}\right) \prod_{i=0}^{s-2}
\frac{(2i+1)!}{(R+2s+1+2i)!},
\end{align}
where $\lambda_{j}$ is defined, for
$1\leq j\leq 2s-1$, by
\begin{align}\label{eq:defLambda}
\l_{j}=(-1)^{j+1}\sum_{k=0}^{s-1}
\frac{2^{-k}}{k!}\binom{2k}{j+k-s}(R+2s)_{j+k-s}.
\end{align}
\end{prop}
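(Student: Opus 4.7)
My plan is to expand $\Pf A$ along its last row using the standard Pfaffian cofactor formula
\begin{equation*}
\Pf A \;=\; \sum_{j=1}^{2s-1}(-1)^{j}\,a_{2s,j}\,\Pf M_{\widehat j},
\end{equation*}
where $M=(a_{i,j})_{1\le i,j\le 2s-1}$ is the Mehta--Wang-type skew-symmetric matrix of odd size $2s-1$ with entries $a_{i,j}=(j-i)/(R+i+j)!$, and $M_{\widehat j}$ denotes the principal minor obtained by deleting row and column $j$. Comparing this expansion with the right-hand side of the proposition, it suffices to establish the closed-form evaluation
\begin{equation*}
\Pf M_{\widehat j}\;=\;(-1)^{j+1}\,2^{s-1}(s-1)!\,\lambda_j\,\prod_{i=0}^{s-2}\frac{(2i+1)!}{(R+2s+1+2i)!}\qquad(1\le j\le 2s-1).
\end{equation*}

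To obtain this, I would exploit the classical fact that for a skew-symmetric matrix $M$ of odd size $N$ the adjugate admits the rank-one factorisation $\operatorname{adj}(M)_{i,j}=(-1)^{i+j}\Pf M_{\widehat i}\,\Pf M_{\widehat j}$, so that the vector $v$ with $v_j=(-1)^{j-1}\Pf M_{\widehat j}$ spans $\ker M$. Since the kernel is one-dimensional (generically in $R$), it is enough to (a) verify that the vector $\vec\lambda=(\lambda_j)_{1\le j\le 2s-1}$ is annihilated by $M$, i.e.
\begin{equation*}
\sum_{j=1}^{2s-1}\frac{j-i}{(R+i+j)!}\,\lambda_j\;=\;0\qquad(1\le i\le 2s-1),
\end{equation*}
and (b) pin down the resulting constant of proportionality by a single normalization. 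For~(b), evaluation at $j=1$ is immediate: only the term $k=s-1$ in~\eqref{eq:defLambda} survives, giving $\lambda_1=1/\bigl(2^{s-1}(s-1)!\bigr)$, while $M_{\widehat 1}$ is itself a shifted Mehta--Wang matrix of even size $2s-2$ whose Pfaffian is, by~\eqref{eq:Mehta-Wang}, precisely $\prod_{i=0}^{s-2}(2i+1)!/(R+2s+1+2i)!$. The ratio $\Pf M_{\widehat 1}/\lambda_1$ fixes the constant, and assembling the pieces with the cofactor expansion then yields the stated Pfaffian formula (the overall minus sign appearing because $(-1)^{j}\cdot(-1)^{j+1}=-1$).

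The main obstacle is the kernel identity in~(a). Substituting the defining sum~\eqref{eq:defLambda} of $\lambda_j$, exchanging the orders of summation, and reindexing (for instance $m=j+k-s$) should reduce this to a family of inner hypergeometric sums of Chu--Vandermonde or Pfaff--Saalsch\"utz type, one for each~$i$. The delicacy is that the $k$-summation in~\eqref{eq:defLambda} is truncated at $k=s-1$, which precludes a direct application of a classical closed-form summation: one must split the $j$-range at $j=s$, carefully account for the boundary contributions, and very likely invoke Gauss's second ${}_2F_1(1/2)$-summation or a terminating ${}_3F_2$-identity, in the spirit of the boundary manipulations used for the auxiliary sums~\eqref{eq:comb-Sa}--\eqref{eq:comb-Sc} and~\eqref{eq:sumbinom} earlier in the paper.
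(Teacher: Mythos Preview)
Your approach is correct and is essentially a repackaging of the paper's own proof. The paper conjugates by an upper-triangular matrix $B$ whose first row is $(\lambda_1,\dots,\lambda_{2s-1},0)$, obtaining $\tilde A=B^tAB$; the crucial claim there is that $\tilde a_{i,1}=\sum_{j}a_{i,j}\lambda_j=0$ for $1\le i\le 2s-1$, which is exactly your kernel condition~(a). Once that holds, the paper reads off $\Pf A=(\det B)^{-1}\Pf\tilde A=-2^{s-1}(s-1)!\,\tilde a_{2s,1}\,\Pf(a_{i,j})_{2\le i,j\le 2s-1}$, using $\lambda_1=1/(2^{s-1}(s-1)!)$ and the Mehta--Wang evaluation~\eqref{eq:Mehta-Wang} of the minor with row and column~$1$ deleted --- precisely your normalization~(b). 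So the three ingredients (kernel identity, value of $\lambda_1$, Mehta--Wang for $M_{\widehat 1}$) coincide; you frame them via the adjugate/cofactor structure of an odd skew-symmetric matrix, whereas the paper frames them via a change of basis.

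One remark on your anticipated obstacle: the paper's verification of~(a) is cleaner than the splitting-and-boundary argument you sketch. The binomial $\binom{2k}{j+k-s}$ already restricts $j$ to $s-k\le j\le s+k$, so after interchanging sums one simply shifts $j\mapsto j+s-k$ to make the inner $j$-sum run over $0\le j\le 2k$. That inner sum splits (via the factor $j+s-k-i$) into two terminating ${}_2F_1$'s at argument~$1$, each evaluable by Chu--Vandermonde; the resulting $k$-sums are combined into three ${}_2F_1$'s at argument~$\tfrac12$, and a single Gauss contiguous relation finishes the job. No separate treatment of the ranges $j\le s$ and $j>s$ is needed.
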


\begin{proof} Let $B=\left(B_{i,j}\right)$ be the upper triangular matrix of size $2s$ defined by
$$
B=\left(
\begin{array}{c|c}
  \l_1 & \l_2 \cdots \l_{2s-1}\,0 \\ \hline
  0 & \raisebox{-15pt}{{\large\mbox{{$I_{2s-1}$}}}} \\[-4ex]
  \vdots & \\[-0.5ex]
  0 &
\end{array}
\right),
$$
where $I_{2s-1}$ is for the identity matrix of size $2s-1$, and set $\widetilde A=B^t A B$.
 Clearly, $\tilde A$ is a skew-matrix of size $2s$ which differs from $A$ only in its first column and its first row, and
we have the relation
\begin{align}\label{eq:FirstColumn}
\tilde a_{i,1}=\sum_{j=1}^{2s-1} a_{i,j}\cdot \l_j.
\end{align}
  We claim that $\tilde a_{i,1}=0$ for $1\leq i\leq 2s-1$, so that
$$
\widetilde{A}= \left(\begin{array}{c|ccccc} 0&0&\cdots &0&-\tilde
a_{2s,1}\\\hline
0&a_{2,2}&\cdots &a_{2,2s-1}&a_{2,2s}\\
\vdots &\vdots & \ddots &\vdots &\vdots\\
0&a_{2s-1,2}&\cdots &a_{2s-1,2s-1}&a_{2s-1,2s}\\
\tilde a_{2s,1}&a_{2s,2}&\cdots &a_{2s,2s-1}&a_{2s,2s}
\end{array}\right).
$$
This, combined with the relation $\widetilde A=B^t A B$, would imply that
$$
\Pf \widetilde A=-\tilde a_{2s,1}\,\underset{2\leq i,j\leq 2s-1}\Pf\left(a_{i,j}\right)
\quad\text{and}\quad
\Pf A=\Pf \widetilde A/\det B=2^{s-1}(s-1)!\Pf \widetilde A.$$
Consequently, we would have
$$\Pf A=-2^{s-1}(s-1)!\tilde a_{2s,1}\,\underset{2\leq i,j\leq 2s-1}\Pf\left(\frac {j-i}
{(R+i+j)!}\right).
$$
 Proposition~\ref{prop:PerturbedMehtaWang} then would immediately follow
from~\eqref{eq:FirstColumn} and the Pfaffian evaluation~\eqref{eq:Mehta-Wang}.

  So, to complete the proof,
 it remains to check our claim that $\tilde a_{i,1}=0$ for $1\leq i\leq
 2s-1$.  By~\eqref{eq:FirstColumn} and \eqref{eq:defAij}--\eqref{eq:defLambda}, we have, for $1\leq i\leq
 2s-1$,
\begin{align}
\tilde a_{i,1}&=\sum_{j=1}^{2s-1}\frac {(j-i)} {(R+i+j)!}(-1)^{j+1}
\sum_{k=0}^{s-1} \frac{2^{-k}}{k!}\binom{2k}{j+k-s}(R+2s)_{j+k-s}\label{eq:comblineaireA_firstRows}\\
            &=\sum_{k=0}^{s-1}\sum_{j=0}^{2k} \frac{2^{-k}}{k!}
\frac {(-1)^{j+s-k} (j+s-k-i)} {(R+i+j+s-k)!}
\binom{2k}{j}(R+2s)_{j},\label{eq:comblineaireA_firstRows-bis}
\end{align}
where~\eqref{eq:comblineaireA_firstRows-bis} follows from~\eqref{eq:comblineaireA_firstRows} by first
interchanging the sums over $j$ and $k$ and then shifting the (now) inner sum over $j$ by $s-k$. The inner sum
in~\eqref{eq:comblineaireA_firstRows-bis} can be rewritten, by splitting the term $(j+s-k-i)$, as
 a sum of two ${}_2F_1$ series. After some manipulation, we obtain
\begin{align*}
\tilde a_{i,1}
&=\sum_{k=0}^{s-1}\frac{2^{-k}}{k!}\frac {(-1)^{s-k+1} (2k)(R+2s)}
{(R+i+s-k+1)!} \, {}_2F_1\left[{{R+2s+1,-2k+1}\atop
{R+i+s-k+2}};1\right]\\
&\qquad+\sum_{k=0}^{s-1}\frac{2^{-k}}{k!}\frac
{(-1)^{s-k} (s-k-i)} {(R+i+s-k)!} \,{}_2F_1\left[{{R+2s,-2k}\atop
{R+i+s-k+1}};1\right]\\
&=\sum_{k=1}^{s-1}\frac{2^{-k+1}}{k!}\frac {(-1)^{s-k+1} k(R+2s)(i-s-k+1)_{2k-1}} {(R+i+s+k)!}\\
&\qquad+\sum_{k=0}^{s-1}\frac{2^{-k}}{k!}\frac {(-1)^{s-k}
(s-k-i)(i-s-k+1)_{2k}} {(R+i+s+k)!},
\end{align*}
where the last equality follows from Chu-Vandermonde
summation formula. It is easily checked that each  summand in the two above sums
vanishes when $i=s$, whence~$\tilde a_{s,1}=0$. Suppose $i\neq s$.
Splitting the term $(s-k-i)$ in the second sum in the above
expression, and rewriting the (now) three sums in hypergeometric
notation, we arrive at
\begin{multline*}
\tilde a_{i,1}=\frac {(-1)^{s}(R+2s)(i-s)} {(R+i+s+1)!}\,
{}_2F_1\left[{{1+i-s,1+s-i}\atop
{R+i+s+2}};\frac{1}{2}\right]\\
+\frac{(-1)^{s}2^{-1}(i-s)_2} {(R+i+s+1)!}\,
{}_2F_1\left[{{2+i-s,1+s-i}\atop
{R+i+s+2}};\frac{1}{2}\right]\\
+\frac {(-1)^{s} (s-i)} {(R+i+s)!}\, {}_2F_1\left[{{1+i-s,s-i}\atop
{R+i+s+1}};\frac{1}{2}\right].
\end{multline*}
To see that the above expression vanishes, it suffices (after
multiplying the above expression by $(-1)^{s}(i-s)^{-1} (R+i+s+1)!$)
to prove that
\begin{align*}
  (c-a)\; {}_2F_1\left[{{a,b+1}\atop{c+1}};x\right]+(1-x)a\; {}_2F_1\left[{{a+1,b+1}\atop{c+1}};x\right]
  -c \; {}_2F_1\left[{{a,b}\atop{c}};x\right]=0,
\end{align*}
where $a=1+i-s$, $b=s-i$, $c=R+i+s+1$ and $x=1/2$. This identity can
be easily derived from Gauss contiguous relations for the ${}_2F_1$
series, or more directly, by extracting the coefficient of $x^n$ in
each side. In our case, we have to check that
\begin{align*}
  (c-a)\frac{(a)_n (b+1)_n}{(c+1)_n(1)_n}+a\left(\frac{(a+1)_n (b+1)_n}{(c+1)_n(1)_n}
  -\frac{(a+1)_{n-1} (b+1)_{n-1}}{(c+1)_{n-1}(1)_{n-1}}\right)
  -c \frac{(a)_n (b)_n}{(c)_n(1)_n}=0,
\end{align*}
which amounts to a routine computation. To summarize, we have proved
that $\tilde a_{i,1}=0$ for $1\leq i\leq 2s-1$. This completes the
proof of Proposition~\ref{prop:PerturbedMehtaWang}.
\end{proof}


\subsection{Proof of Lemma~\ref{lem:Pf(S)_N}}
 It follows from~\eqref{eq:Dij} that we can apply Proposition~\ref{prop:PerturbedMehtaWang}
to the matrix $$\left(\frac{(-1)^{i+j}D_{i,j}}{2(2N+i)!(2N+j)!}\right)_{1\leq i,j\leq 2\s}.$$
After some simplification, we obtain
\begin{align*}
\Pf D&= (-1)^{\s}2^{\s}\left(\prod_{i=1}^{2\s}(2N+i)!\right)\underset{1\leq i,j\leq 2\s}\Pf\left(\frac{(-1)^{i+j}D_{i,j}}{2(2N+i)!(2N+j)!}\right)\\
      &=(-1)^{\s}2^{2\s-2}(\s-1)!\left(\prod_{i=1}^{2\s-1}(2N+i)!\right) \left(\prod_{i=0}^{\s-2}
\frac{(2i+1)!}{(4N+2\s+3+2i)!}\right)\,V(N,\s),
\end{align*}
where
\begin{align}\label{eq:def-Vns}
 V(N,\s)=\sum_{j=1}^{2\s-1}\frac{D_{2\s,j}}{(2N+j)!}\sum_{k=0}^{\s-1}\frac{2^{-k}}{k!}\binom{2k}{j+k-\s}(4N+2+2\s)_{j+k-\s}.
\end{align}
This easily leads to the following reformulation of Lemma~\ref{lem:Pf(S)_N}.
\begin{prop}\label{prop:reformulation-lemma} Let $V(N,\s)$ be defined by~\eqref{eq:def-Vns}. Then,
\begin{align*}
 V(N,\s)=\frac{2^{N+2}(2N+\s+1)!}{(N+\s)!(4N+2\s+2)!}\left((-1)^{\s}(2N+2\s)!!-(2N+2\s-1)!!\right).
\end{align*}
\end{prop}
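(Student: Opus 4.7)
The plan is to substitute the explicit expression~\eqref{eq:Dij-lastrow} for $D_{2\s,j}$ into the definition~\eqref{eq:def-Vns} of $V(N,\s)$, splitting $D_{2\s,j}$ into its three summands so as to decompose $V(N,\s)=V_1(N,\s)+V_2(N,\s)+V_3(N,\s)$. Here $V_1$ comes from the first (iterated) term of $D_{2\s,j}$, while $V_2$ and $V_3$ come from the two single-term summands. I would also split the factor $2^i-\binom{i}{N}$ appearing inside $V_1$ into two separate subsums $V_1^{(a)}$ and $V_1^{(b)}$, since Gauss's second ${}_2F_1$-summation---the key ingredient behind Section~\ref{sec:PfM-4} and the source of the parity-sensitive closed forms---should naturally produce $(2N+2\s)!!$ from one of these subsums and $(2N+2\s-1)!!$ from the other, thereby explaining the form of the right-hand side.

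For the easier pieces $V_2$ and $V_3$, I would swap the orders of summation so that $k$ becomes the outer variable, and reindex the inner $j$-sum by $m:=j+k-\s$, so that $0\le m\le 2k$. The binomial $\binom{2k}{m}$ then combines with the shifted Pochhammer $(4N+2\s+2)_m$ and the $j$-dependent factor coming from the corresponding summand of $D_{2\s,j}$ to form a balanced terminating hypergeometric series, summable by Chu--Vandermonde or by Pfaff--Saalsch\"utz. The resulting single sum in $k$ should simplify to an expression proportional to $(2N+2\s-1)!!/(4N+2\s+2)!$ after a telescoping step of the sort used in~\eqref{eq:sumQi_last1}.

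For $V_1$, I would interchange the order of summation to bring $i$ to the outside. The inner $\ell$-sum, which contains $\binom{2N+j}{i-\ell}\binom{\ell+j-1}{\ell}/(2N+\ell+j+2)$, is structurally identical to the one already handled in Lemma~\ref{lem:evaluationPfS}, and should admit a closed form by the same method (Pfaff--Saalsch\"utz applied after a single partial-fraction trick on $1/(2N+\ell+j+2)$). Once the $\ell$-sum is evaluated, I would treat the remaining $(j,k)$-part by the same substitution $m=j+k-\s$ used for $V_2,V_3$ and close it by Gauss's second theorem. This reduces $V_1$ to a single sum over $i$ of the type $\sum_{i=0}^{2N}(-2)^i(\cdots)$ respectively $\sum_{i=0}^{2N}(-1)^i\binom{i}{N}(\cdots)$, both of precisely the same kind as the sums in~\eqref{eq:comb-Sa}--\eqref{eq:comb-Sc} of Section~\ref{sec:PfN-3} and hence closable in the same way.

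The main obstacle will be the precise bookkeeping for $V_1$: the quadruple sum involves several Pochhammer and binomial shifts together with parity-sensitive simplifications, and the final step---verifying that $V_1+V_2+V_3$ collapses to exactly the expression on the right-hand side of the proposition---demands some delicate cancellations between the three pieces. Tracking which of $V_2$, $V_3$, $V_1^{(a)}$, $V_1^{(b)}$ contributes which of the two double-factorial terms will be the main check, and the factor $(-1)^\s$ suggests the parity of $\s$ enters only through Gauss's second summation. If the hand-manipulations become unwieldy, a natural fallback is to reduce the identity to a single terminating hypergeometric sum in one variable and then verify it using Zeilberger's creative telescoping algorithm.
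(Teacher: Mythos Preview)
Your overall strategy---decompose $D_{2\s,j}$ via~\eqref{eq:Dij-lastrow} and close the resulting $(j,k)$-sums with classical hypergeometric identities---is in the same spirit as the paper's, but two of your specific expectations are too optimistic and would not go through as stated.

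First, the inner $\ell$-sum in $V_1$ is \emph{not} structurally identical to the one in Lemma~\ref{lem:evaluationPfS}: there the weight is the factorial ratio $(\ell+j-1)!/(\ell+j+a+2)!$, whereas here it is the bare linear factor $1/(2N+\ell+j+2)$. After the partial-fraction trick (which the paper also uses), one piece of the $\ell$-sum does close by Chu--Vandermonde, but the other piece does \emph{not} close to a product. The paper handles it via contour-integral representations of the binomials, and even then the outcome---equation~\eqref{eq:simplification-Dij}---still carries an unevaluated single sum over a new index~$h$. So ``Pfaff--Saalsch\"utz after a partial-fraction trick'' will not finish $V_1$.

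Second, the $(j,k)$-sums that remain are not closable by Gauss's second theorem. The paper, working from the simplified form~\eqref{eq:simplification-Dij}, obtains four such sums (Lemma~\ref{lem:multisum}); the easy ones use Chu--Vandermonde followed by Bailey's ${}_2F_1$-summation, while the harder ones---including your $V_3$, which is exactly~\eqref{eq:multisum3}---require the Gosper--Zeilberger algorithm combined with Watson's ${}_3F_2$-summation, contiguous relations, and Pfaff--Saalsch\"utz. In particular the double factorials do not emerge from Gauss's second theorem but from the constants produced by~\eqref{eq:simplification-Dij} together with Bailey's summation in~\eqref{eq:multisum2}. Your ``fallback'' to creative telescoping is therefore not a fallback but the main engine of the argument, and the real obstacle is not bookkeeping but the fact that several intermediate sums are genuinely non-elementary.
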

The first step towards a proof of the preceding result is to simplify expression~\eqref{eq:Dij-lastrow} for $D_{2\s,j}$.
We shall prove at the end of this section that
 \begin{align}\label{eq:simplification-Dij}
 \begin{split}
  \frac{(-1)^{j+1}D_{2\s,j}}{(2N+j)!}
&=\frac{(j-1)(2N+1)!}{(4N+j+3)!}\left(\frac{2(2N+1)!}{N!(N+1)!}-2^{2N+2}\right)\\
&\quad-\frac{(2N+2)!}{(4N+j+3)!}\left(\frac{4(2N+1)!}{N!(N+1)!}+2^{2N+3}\right)
-\frac{(j-1)(N+j-1)!N!}{(2N+j)!(2N+j+1)!}\\
&\quad+\frac{2(j-1)(2N+2)!}{(2N+j+1)!(N+1)!}\sum_{h=N}^{2N+1}\frac{(h+j-1)!(h+1)!}{(h-N)!(2N+j+h+2)!}.
 \end{split}
\end{align}
Therefore, by~\eqref{eq:def-Vns}, we have
\begin{align}
\begin{split}\label{eq:decompo-Vns}
 V(N,\s)&= (2N+1)!\left(\frac{2(2N+1)!}{N!(N+1)!}-2^{2N+2}\right)
 \sum_{j=1}^{2\s-1}\frac{(-1)^{j+1}(j-1)}{(4N+j+3)!}\l_j(N,\s)\\
 &-(2N+2)!\left(\frac{4(2N+1)!}{N!(N+1)!}+2^{2N+3}\right)
  \sum_{j=1}^{2\s-1}\frac{(-1)^{j+1}}{(4N+j+3)!}\l_j(N,\s)\\
 &-N!\sum_{j=1}^{2\s-1}\frac{(-1)^{j+1}(j-1)(N+j-1)!}{(2N+j)!(2N+j+1)!}\l_j(N,\s)\\
 &+\frac{2(2N+2)!}{(N+1)!}\sum_{j=1}^{2\s-1}\frac{(-1)^{j+1}(j-1)}{(2N+j+1)!}
  \sum_{h=N}^{2N+1}\frac{(h+j-1)!(h+1)!}{(h-N)!(2N+j+h+2)!}\l_j(N,\s),
 \end{split}
\end{align}
where we have set
\begin{align}\label{eq:def-lamdaNs}
\l_j(N,\s)=\sum_{k=0}^{\s-1}\frac{2^{-k}}{k!}\binom{2k}{j+k-\s}(4N+2+2\s)_{j+k-\s}.
\end{align}
 All the sums in~\eqref{eq:decompo-Vns} can be evaluated in closed-form expressions.
\begin{lem}\label{lem:multisum}
For all positive integers $N$ and $\s$ we have
 \begin{align}
 \begin{split}\label{eq:multisum1}
 & \sum_{j=1}^{2\s-1}\frac{(-1)^{j+1}(j-1) }{(4N+j+3)!}\l_j(N,\s)=0,
 \end{split}\\
\begin{split}\label{eq:multisum2}
 & \sum_{j=1}^{2\s-1}\frac{ (-1)^{j+1}}{(4N+j+3)!}\l_j(N,\s)
 =(-2)^{\s-1}\frac{(2N+\s+1)!}{(2N+2)!(4N+2\s+2)!},
 \end{split}\\
\begin{split}\label{eq:multisum3}
&\sum_{j=1}^{2\s-1}\frac{(-1)^{j+1}(j-1)(N+j-1)!}{(2N+j)!(2N+j+1)!}\l_j(N,\s)=0,
 \end{split}\\
 \begin{split}\label{eq:multisum4}
&  \sum _{j=1}
^{2\s-1}\sum_{h=N}^{2N+1}\frac{(-1)^{j+1}(j-1)\,(h+j-1)!\,(h+1)!}
     {(h-N)!\,(2N+j+h+2)!\,(2N+j+1)!}\l_j(N,\s)\\
&\qquad=\frac {(2N+\s+1)!} {(N+1)!\,(4N+2\s+2)!}
\left((-2)^{\s-1}-\frac {(N+1)!^2\,(2N+2\s)!}
{2^{\s-1}\,(N+\s)!^2\,(2N+2)!}\right).
 \end{split}
\end{align}
\end{lem}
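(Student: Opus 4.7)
The plan is to prove the four identities by a uniform strategy: substitute the definition of $\lambda_j(N,\s)$ from \eqref{eq:def-lamdaNs}, interchange the order of summation so that $k$ becomes the outer index, and then perform the shift $m := j+k-\s$ on the inner variable. Since the binomial $\binom{2k}{m}$ enforces $0\le m\le 2k$, the inner sum in each of \eqref{eq:multisum1}--\eqref{eq:multisum3} becomes a terminating, $k$-dependent hypergeometric sum with only $O(k)$ terms. The $j$-dependent rational factors in front of $\lambda_j$ transform into rational functions of $m$ and $k$ that combine with $(4N+2+2\s)_m$ into Pochhammer ratios; in each case the resulting inner sum is either a $_2F_1$ or a $_3F_2$ that admits a closed-form evaluation.

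For identity \eqref{eq:multisum1}, after the swap the factor $(j-1)$ splits as $m + (\s-k-1)$, yielding two $_2F_1$ sums that can be evaluated via Chu--Vandermonde or Gauss's second $_2F_1$-summation (the same tool already used in Section~\ref{sec:PfM-4}); a term-by-term check should show cancellation for every $k$. Identity \eqref{eq:multisum2} is the most straightforward: the inner $m$-sum is a standard Chu--Vandermonde sum, producing a closed-form expression whose $k$-sum then telescopes to the claimed product, with the alternating sign accounting for the factor $(-2)^{\s-1}$. Identity \eqref{eq:multisum3} involves the additional rational factor $(N+j-1)!/(2N+j)!\,(2N+j+1)!$, which shifts the hypergeometric parameters; after the swap the inner sum becomes Saalsch\"utzian and vanishes by the Pfaff--Saalsch\"utz summation, killing the entire expression.

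The main obstacle is \eqref{eq:multisum4}, a genuine triple sum. My plan is to first interchange $\sum_j$ with $\sum_h$, fix $h$, and reduce the $(j,k)$-double sum to a closed-form expression $F(h)$ by the same swap-and-evaluate technique used for (1)--(3); specifically, the Pochhammer $(h+j-1)!/(2N+j+h+2)!$ combines with $(4N+2+2\s)_{j+k-\s}$ to give a $_3F_2$ in which a Saalsch\"utzian structure should emerge. The remaining sum $\sum_{h=N}^{2N+1} F(h)$ is then a single finite sum that I expect to split into two pieces matching the right-hand side of \eqref{eq:multisum4}: a principal piece evaluating by Chu--Vandermonde to the $(-2)^{\s-1}$ term, and a residual piece producing the correction with $(N+1)!^{2}(2N+2\s)!/(N+\s)!^{2}(2N+2)!$.

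A safer and more uniform alternative, which I would run in parallel, is to apply the Gosper--Zeilberger algorithm in $\s$ (with $N$ as a free parameter) to each of the four identities: each left-hand side and each right-hand side should satisfy a first-order hypergeometric recurrence in $\s$, so one only needs to check that the two recurrences agree and that the initial value at $\s=1$ matches. For \eqref{eq:multisum4} this reduces the triple sum to a verifiable recurrence, sidestepping the ad hoc manipulations above. Once Lemma~\ref{lem:multisum} is established in either way, substitution into \eqref{eq:decompo-Vns} and routine simplification (the common factor $(2N+\s+1)!/(4N+2\s+2)!$ from (2) and (4) combines cleanly with the prefactors in \eqref{eq:decompo-Vns}) yield Proposition~\ref{prop:reformulation-lemma}, and hence Lemma~\ref{lem:Pf(S)_N}.
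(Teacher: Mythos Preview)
Your overall strategy --- interchange the $j$- and $k$-sums, shift $j\mapsto m+\s-k$, evaluate the inner hypergeometric sum, and fall back on Gosper--Zeilberger in~$\s$ --- is precisely the route the paper takes: the swap-and-evaluate approach for \eqref{eq:multisum1} and \eqref{eq:multisum2}, and Zeilberger-in-$\s$ for \eqref{eq:multisum3} and \eqref{eq:multisum4}. However, several of your specific predictions about \emph{which} classical summation finishes each case are off. For \eqref{eq:multisum1} the paper just invokes the computation of $\tilde a_{i,1}=0$ from Proposition~\ref{prop:PerturbedMehtaWang} (with $i=1$, $R=4N+2$); that argument does follow your swap-and-split recipe, but the cancellation is \emph{not} termwise in~$k$ --- after two Chu--Vandermonde evaluations a $k$-sum of three ${}_2F_1$'s at $z=\tfrac12$ remains, and only a Gauss contiguous relation closes it. For \eqref{eq:multisum2} the $k$-sum left after Chu--Vandermonde is a ${}_2F_1$ at $z=\tfrac12$ evaluated by \emph{Bailey's} summation; it does not telescope. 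For \eqref{eq:multisum3} your claim that the inner sum ``vanishes by Pfaff--Saalsch\"utz'' is unfounded (Pfaff--Saalsch\"utz yields a product of Pochhammers, not zero); the paper instead runs Zeilberger to obtain a first-order recurrence in~$\s$ whose inhomogeneous part is a ${}_3F_2$ annihilated by \emph{Watson's} summation.

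The real gap is \eqref{eq:multisum4}. Your Zeilberger-in-$\s$ plan is correct but far from a one-liner: the paper uses the same certificate as for \eqref{eq:multisum3} to get a recurrence $S_3(N,\s)+(4N+2\s+3)S_3(N,\s+1)=\text{(boundary terms)}$, but those boundary terms form a nontrivial double sum over $h$ and $j$. The $j$-sum is a ${}_3F_2$ that is \emph{not} directly Saalsch\"utzian; the paper first applies a contiguous relation to split it into two balanced ${}_3F_2$'s, evaluates both by Pfaff--Saalsch\"utz, and is still left with a single sum over~$h$ that needs a \emph{second} Zeilberger run, now in~$N$, to match the claimed right-hand side. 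Your sketch (``a Saalsch\"utzian structure should emerge'', then a single $h$-sum splitting into two pieces) glosses over the contiguous-relation step and the second creative-telescoping pass; if you attempt it as written you will likely stall at the non-balanced ${}_3F_2$.
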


At this point, we should notice that, after plugging these sum
evaluations into~\eqref{eq:decompo-Vns}, it becomes a routine matter
to verify Proposition~\ref{prop:reformulation-lemma}.  So, to complete the proof of Lemma~\ref{lem:Pf(S)_N},
it remains to prove the preceding lemma and~\eqref{eq:simplification-Dij}.\\

\noindent\textit{Proof of Lemma~\ref{lem:multisum}.} (1) The double
sum on the left-hand side of~\eqref{eq:multisum1}  is the $i=1$,
$s=\sigma$ and $R=4N+2$ specialization
to~\eqref{eq:comblineaireA_firstRows}, which was shown to be zero
in the proof of Proposition~\ref{prop:PerturbedMehtaWang}, whence~\eqref{eq:multisum1}.\\

(2) Let $S_1(N,\s)$ denote the double sum on the left-hand side of~\eqref{eq:multisum2}.
Interchanging the sums over $j$ and $k$ in $S_1(N,\s)$, we see that the
(now) inner sum over $j$ can be written as a ${}_2F_1$ series which
is summable by Chu-Vandermonde formula. To be precise, we obtain
\begin{align*}
S_1(N,\s)&=\sum_{j=1}^{2\s-1}\frac{
(-1)^{j+1}}{(4N+j+3)!}\sum_{k=0}^{\s-1}\frac{2^{-k}}{k!}\binom{2k}{j+k-\s}(4N+2+2\s)_{j+k-\s}\\
&=\sum_{k=0}^{\s-1}\frac{2^{-k}}{k!}\frac
{(-1)^{\s-k+1}}{(4N+\s-k+3)!}
\, {}_2F_1\left[{{2N+2\s+2,-2k}\atop{4N+4+\s-k}};1\right]\\
&=\sum_{k=0}^{\s-1}\frac{2^{-k}}{k!}\frac {(-1)^{\s-k+1}}{(4N+\s-k+3)!}
\frac{(2-\s-k)_{2k}}{(4N+4+\s-k)_{2k}}.
\end{align*}
Writing the above sum in hypergeometric notation, we arrive at
\begin{align*}
S_1(N,\s)&=\frac{(-1)^{\s+1}}{(4N+\s+3)!}\,{}_2F_1\left[{{\s-1,2-\s}\atop{4N+4+\s}};\frac{1}{2}\right]
=(-1)^{\s+1}2^{\s-1}\frac{(2N+\s+1)!}{(2N+2)!(4N+2\s+2)!},
\end{align*}
where the last equality  follows from Bailey's ${}_2F_1$ summation
formula. This ends the proof of~\eqref{eq:multisum2}.\\

(3) Let $S_2(N,\s)$ denote the double sum on the left-hand side of
\eqref{eq:multisum3}. We shall establish the recurrence
\begin{equation} \label{eq:rek1}
S_2(N,\s)+(4N+2\s+3)S_2(N,\s+1)=0,\quad \quad \s\ge1.
\end{equation}
Since it is easy to verify directly that $S_2(N,1)=0$, this would
immediately imply the claim.

In order to prove \eqref{eq:rek1}, we use the Gosper--Zeilberger
algorithm (cf.\ \cite{GospAB} and \cite[\S~II.5]{PeWZAA} --- the
particular implementation that we used is the {\sl Mathematica}
implementation by Paule and Schorn \cite{PaScAA}) to find that
\begin{equation} \label{eq:rek1f}
F_2(N,\s,j,k) +(4N+2\s+3)F_2(N,\s+1,j,k)= G_2(N,\s,j,k+1)-G_2(N,\s,j,k),
\end{equation}
where $F_2(N,\s,j,k)$ is the summand of the sum on the left-hand side
of \eqref{eq:multisum3}, that is,
$$
F_2(N,\s,j,k)=(-1)^{j+1}\frac {(j-1)\,(N+j-1)!} {(2N+j)!\,(2N+j+1)!}
\frac {1} {2^k\,k!}\binom {2k}{j+k-\s} (4N+2\s+2)_{j+k-\s},
$$
and
$$
G_2(n,\s,j,k)=\frac {j+k-\s} {4N+2\s+2}F_2(N,\s,j,k).
$$
We now sum both sides of \eqref{eq:rek1f} over $k$ between $0$ and
$\s-1$ and subsequently over $j$ between $1$ and $2\s-1$. Taking into
account the telescoping effect on the right-hand side when we
perform summation over $k$, we arrive at
\begin{multline*} 
S_2(N,\s)+(4N+2\s+3)S_2(N,\s+1)
-(4N+2\s+3)\sum _{j=1} ^{2\s-1}F_2(N,\s+1,j,\s)\\
-(4N+2\s+3)\big(F_2(N,\s+1,2\s,\s)+F_2(N,\s+1,2\s,\s-1)+F_2(N,\s+1,2\s+1,\s)\big)\\
=\sum _{j=1} ^{2\s-1}(G_2(N,\s,j,\s)-G_2(N,\s,j,0)).
\end{multline*}
After some simplification, this becomes
\begin{align*} \label{}
S_2(N,\s)&{}+(4N+2\s+3)S_2(N,\s+1)\\
&=
  \left( 4 N + 4 \s + 3 \right)
    \sum_{j = 1}^{2 \s-1}
      {\frac {{{\left( -1 \right) }^{j}}\,\left( j-1 \right) \,
          \left( N+j-1 \right) !\,\left( 2 \s \right) !\,
          ({ \textstyle 4 N + 2 \s+3}) _{j-1} }
        {{2^\s}\,\left( j-1 \right) !\,\left( 2 N+j \right) !\,
          \left(  2 N +j+1 \right) !\,\s!\,\left( 2 \s-j+1 \right)
!}}\\
&\kern2cm + {\frac {{2^{1 - \s}} \left( 4 N + 4 \s + 3 \right) \,
      \left( N + 2 \s -1\right) !\,
      ({ \textstyle 4 N + 2 \s+3}) _{2 \s-1} }
    {\left( N + \s+1 \right) \,\left( \s-2 \right) !\,
      {{\left( 2 N + 2 \s \right) !}^2}}} \\
&=  \left( 4 N + 4 \s + 3 \right)
    \sum_{j = 1}^{2 \s+1}
      {\frac {{{\left( -1 \right) }^{j}}\,\left( j-1 \right) \,
          \left( N+j-1 \right) !\,\left( 2 \s \right) !\,
          ({ \textstyle 4 N + 2 \s+3}) _{j-1} }
        {{2^\s}\,\left( j-1 \right) !\,\left( 2 N+j \right) !\,
          \left(  2 N +j+1 \right) !\,\s!\,\left( 2 \s-j+1 \right) !}}
.
\end{align*}
It remains to show that the right-hand side of this equation
vanishes. In order to see this, we write the sum over $j$ in
hypergeometric notation. Thereby we obtain
\begin{multline*} 
S_2(N,\s)+(4N+2\s+3)S_2(N,\s+1)\\= {\frac {2 \s\left( 4 N + 4 \s
+3\right) \,\left( 4 N + 2 \s +3\right) \,
     \left( N+1 \right) !}
   {{2^\s}\,\s!\,\left( 2 N+2 \right) !\,\left( 2 N+3 \right) !}}
     {} _{3} F _{2} \!\left [ \begin{matrix} { - 2 \s+1, 4 N + 2 \s+4,N+2}\\ {
      2 N+3, 2 N+4}\end{matrix} ; {\displaystyle 1}\right ].
\end{multline*}
This $_3F_2$-series can be evaluated by means of Watson's
$_3F_2$-summation (see \cite[(2.3.3.13); Appendix (III.23)]{SlatAC})
\begin{equation*} 
{} _{3} F _{2} \!\left [ \begin{matrix} { a, b, c}\\ { {\frac {1 + a
+ b} 2}, 2
   c}\end{matrix} ; {\displaystyle 1}\right ]
=
 \frac {\Gamma\left( {\frac 1 2}\right)\, \Gamma\left({\frac 1 2} + c\right)\,
   \Gamma\left(  {\frac 1 2} + {\frac a 2} +
   {\frac b 2}\right)\,
   \Gamma\left( {\frac 1 2} -{\frac a 2}-{\frac b 2}+c\right)}
 {\Gamma\left({\frac 1 2} + {\frac a
   2}\right)\, \Gamma\left( {\frac 1 2} + {\frac b 2}\right)\,
   \Gamma\left( {\frac 1 2} - {\frac a 2} + c\right)\,
   \Gamma\left( {\frac 1 2} -
   {\frac b 2} + c\right)}.
\end{equation*}
In fact, because of the term $\Gamma(\frac {1} {2}+\frac {a} {2})$
in the denominator on the right-hand side, the series vanishes
whenever $a$ is an odd negative integer. The recurrence
\eqref{eq:rek1} follows immediately. This completes the proof of~\eqref{eq:multisum3}.\\

(4) Again, we prove the claim by induction on $\s$. To start the
induction, we have to show that \eqref{eq:multisum4} holds for $\s=1$.
This is trivially the case, since in this case the sum over $j$
reduces to just one term, the term for $j=1$, so that both sides of
\eqref{eq:multisum4} vanish.

Let $S_3(N,\s)$ denote the triple sum on the left-hand side of
\eqref{eq:multisum4}. We now claim that
\begin{multline} \label{eq:rek2}
S_3(N,\s)+(4N+2\s+3)S_3(N,\s+1)\\= -\frac{ (4 N+4
   \s+3)\,(N+1)!\, (2 N+\s+1)!\, (2 N+2
   \s)!}
{2^\s\,(2 N+2)!\, (N+\s)!\,
   (N+\s+1)!\, (4 N+2 \s+2)!},
\quad \quad \s\ge1.
\end{multline}
Since the right-hand side of \eqref{eq:multisum4} satisfies the above
recurrence --- as is not difficult to check --- this would prove the
lemma.

In order to prove \eqref{eq:rek2}, we multiply both sides of
\eqref{eq:rek1} by
$$
\frac {(h+j-1)!\,(h+1)!} {(h-N)!\,(2N+j+h+2)!} \bigg/ \frac
{(N+j-1)!} {(2N+j)!}
$$
to find that
\begin{equation} \label{eq:rek2f}
F_3(N,\s,j,h,k) +(4N+2\s+3)F_3(N,\s+1,j,h,k)=
G_3(N,\s,j,h,k+1)-G_3(N,\s,j,h,k),
\end{equation}
where $F_3(N,\s,j,h,k)$ is the summand of the sum on the left-hand
side of \eqref{eq:multisum4}, that is,
\begin{multline*}
F_3(N,\s,j,h,k)=(-1)^{j+1}\frac {(j-1)\,(h+j-1)!\,(h+1)!}
{(h-N)!\,(2N+j+h+2)!\,(2N+j+1)!}\\
\times \frac {1} {2^k\,k!}\binom {2k}{j+k-\s} (4N+2\s+2)_{j+k-\s},
\end{multline*}
and
$$
G_3(n,\s,j,h,k)=\frac {j+k-\s} {4N+2\s+2}F_3(N,\s,j,h,k).
$$
We now sum both sides of \eqref{eq:rek2f} over $k$ between $0$ and
$\s-1$, and subsequently over $j$ between $1$ and $2\s-1$, and over
$h$ between $N$ and $2N+1$. Taking into account the telescoping
effect on the right-hand side when we perform summation over $k$, we
arrive at
\begin{multline*} 
S_3(N,\s)+(4N+2\s+3)S_3(N,\s+1)
-(4N+2\s+3)\sum _{j=1} ^{2\s-1}\sum_{h=N}^{2N+1}F_3(N,\s+1,j,h,\s)\\
-(4N+2\s+3)\sum_{h=N}^{2N+1}
\big(F_3(N,\s+1,2\s,h,\s)\kern6cm\\
\kern5cm
+F_3(N,\s+1,2\s,h,\s-1)+F_3(N,\s+1,2\s+1,h,\s)\big)\\
=\sum _{j=1}
^{2\s-1}\sum_{h=N}^{2N+1}\big(G_3(N,\s,j,h,\s)-G_3(N,\s,j,h,0)\big).
\end{multline*}
After some simplification, this becomes
\begin{align*} \label{}
S_3&(N,\s)+(4N+2\s+3)S_3(N,\s+1)\\
&=  -
    \sum _{h=N} ^{2N+1}\sum_{j = 2}^{2 \s-1}
      {\frac {{{\left( -1 \right) }^{j}}\,\left( 4 N + 4 \s + 3 \right) \,
          \left( h+j-1 \right) !\,\left( h+1 \right) !\,\left( 2 \s \right) !\,
          ({ \textstyle 4 N + 2 \s+3}) _{j-1} }
        {{2^\s}\,\left( j-2 \right) !\,
          \left( h-N\right) !\,\left( 2 N+j +h+2\right) !\,
          \left(  2 N +j+1 \right) !\,\s!\,\left( 2 \s-j+1 \right)!}}
\\
&\kern.5cm + \sum _{h=N} ^{2N+1}
  4\,\left( 4 N + 4 \s+3 \right)
   \big( 3 + 2 h + 5 N + 3 h N + 2 {N^2} + \s + h \s - 2 N \s \\
&\kern5cm -
       2 h N \s - 4 {N^2} \s - 4 {\s^2} - 2 h {\s^2} - 8 N {\s^2} -
       4 {\s^3} \big)\\
&\kern4cm \cdot {\frac { \left( h+1 \right) !\,
     \left( h + 2 \s -1\right) !\,
     ({ \textstyle 4 N + 2 \s+3}) _{2 \s-1} }
   {{2^\s}\,\left( h - N \right) !\,\left( \s-1 \right) !\,
     \left( 2 N + 2 \s+2 \right) !\,\left( 2 N + 2 \s+h+3 \right) !}}\\
&= -
    \sum _{h=N} ^{2N+1}\sum_{j = 2}^{2 \s+1}
      {\frac {{{\left( -1 \right) }^{j}}\,\left( 4 N + 4 \s + 3 \right) \,
          \left( h+j-1 \right) !\,\left( h+1 \right) !\,\left( 2 \s \right) !\,
          ({ \textstyle 4 N + 2 \s+3}) _{j-1} }
        {{2^\s}\,\left( j-2 \right) !\,
          \left( h-N\right) !\,\left( 2 N+j +h+2\right) !\,
          \left(  2 N +j+1 \right) !\,\s!\,\left( 2 \s-j+1 \right)!}}
.
\end{align*}
By writing the sum over $j$ in hypergeometric notation, this turns
into
\begin{multline*} \label{}
S_3(N,\s)+(4N+2\s+3)S_3(N,\s+1)\\
=-\sum_{h=N}^{2N+1} {\frac {\left( 4 N + 2 \s+3 \right) \,
      \left( 4 N + 4 \s +3\right) \,
      {{\left( h+1 \right) !}^2}}
    {2^{\s-1}\,\left( h - N \right) !\,\left( 2 N+3 \right) !\,
      \left( 2 N+h+4 \right) !\,\left( \s-1 \right) !}}\\
\cdot
      {} _{3} F _{2} \!\left [ \begin{matrix} { h+2, 4 N + 2 \s+4,- 2 \s+1}\\ {
       2 N+4, 2 N+h+5}\end{matrix} ; {\displaystyle 1}\right ]
.
\end{multline*}
Next we apply the contiguous relation
$$
{} _{3} F _{2} \!\left [ \begin{matrix} { a, b, c}\\ {
d,e}\end{matrix} ;
   {\displaystyle z}\right ]  =
  {\frac{b  } {b - a}}
  {} _{3} F _{2} \!\left [ \begin{matrix} { a, b + 1, c}\\ {d,e}\end{matrix} ;
        {\displaystyle z}\right ] +
   {\frac {a  } {a - b}}
  {} _{3} F _{2} \!\left [ \begin{matrix} { a + 1, b, c}\\ { d,e}\end{matrix} ;
        {\displaystyle z}\right ].
$$
We obtain
\begin{multline*} \label{}
S_3(N,\s)+(4N+2\s+3)S_3(N,\s+1)\\
= -\sum_{h=N}^{2N+1} {\frac {\left( 4 N + 2 \s+3 \right) \,
       \left( 4 N + 2 \s+4 \right) \,\left( 4 N + 4 \s+3 \right) \,
       {{\left( h +1\right) !}^2}}
     {{2^{\s-1}}\,\left( 4 N + 2 \s-h+2 \right) \,\left( h - N \right) !\,
       \left( 2 N +3\right) !\,\left( 2 N+h+4 \right) !\,
       \left( \s -1\right) !}}\\
\cdot
       {} _{3} F _{2} \!\left [ \begin{matrix} { h+2, 4 N + 2 \s+5,  - 2 \s+1}\\
        { 2 N+4, 2 N+h+5}\end{matrix} ; {\displaystyle 1}\right ] \\
+\sum_{h=N}^{2N+1}
  {\frac {\left( 4 N + 2 \s+3 \right) \,
      \left( 4 N + 4 \s+3 \right) \,
      \left( h+1 \right) !\,\left( h+2 \right) !}
    {{2^{\s-1}}\,\left( 4 N + 2 \s-h+2 \right) \,\left( h - N \right) !\,
      \left( 2 N+3 \right) !\,\left( 2 N+h+4 \right) !\,
      \left( \s-1 \right) !}}\\
\cdot
      {} _{3} F _{2} \!\left [ \begin{matrix} { h+3, 4 N + 2 \s+4, 1 - 2 \s}\\ {
       2 N+4, 2 N+h+5}\end{matrix} ; {\displaystyle 1}\right ]
.
\end{multline*}
Both $_3F_2$-series can be evaluated by means of the
Pfaff--Saalsch\"utz summation (cf.\ \cite[(2.3.1.3); Appendix
(III.2)]{SlatAC})
$$
{}_{3} F_{2} \! \left[ \begin{matrix} { a, b, -n} \\ { c, 1 + a + b
- c - n} \end{matrix} ; {\displaystyle 1} \right ] =
{\frac{({\textstyle c-a})_{n} \, ({\textstyle c-b})
_{n}}{({\textstyle c})_{n} \, ({\textstyle c-a-b})_{n}}},
$$
where $n$ is a non-negative integer. If we apply the formula, then,
after some simplification, the above recurrence reduces to

\begin{multline*} \label{}
S_3(N,\s)+(4N+2\s+3)S_3(N,\s+1)\\
= 4\,\left( 4 N + 2 \s+3 \right) \,\left( 4 N + 4 \s+3 \right)
\sum_{h=N}^{2N+1}
\left( 1 + h - N + h N - 2 {N^2} - 3 \s - 4 N \s - 2 {\s^2} \right)\\
\cdot {\frac {
     {{\left( h+1 \right) !}^2}\,({ \textstyle 2 N+3}) _{2 \s-2} \,
     ({ \textstyle 2 - h + 2 N}) _{2 \s-2} }
   {{2^\s}\,\left( h - N \right) !\,\left( \s-1 \right) !\,
     \left( 2 N + 2 \s+2 \right) !\,\left( 2 N + 2 \s+h+3 \right) !}}
.
\end{multline*}
Let $S_4(N,\s)$ denote the right-hand sum. The Gosper--Zeilberger
algorithm then yields the recurrence
\begin{multline} \label{eq:rek3}
(2 N + 2 \s+1) (4 N + 4 \s+7)S_4(N,\s)\\-
  4 (2 N+3) (N + \s+2) (4 N + 2 \s+3) (4 N + 2 \s+5) (4 N +
     4 \s+3)S_4(N+1,\s)=0,
\end{multline}
with a half-page certificate, which we omit here for the sake of
brevity. Since it is straightforward to check that $S_4(1,\s)$ is
equal to the right-hand side of \eqref{eq:rek2} with $N=1$, and that
the right-hand side of \eqref{eq:rek2} satisfies the recurrence in
\eqref{eq:rek3}, the claimed recurrence \eqref{eq:rek2} follows.

This completes the proof of~\eqref{eq:multisum4}, and thus of the lemma.
\qed\\

We now turn our attention to the proof of~\eqref{eq:simplification-Dij}.\\

\noindent\textit{Proof of~\eqref{eq:simplification-Dij}.}
By~\eqref{eq:Dij-lastrow}, we have
\begin{align}\label{eq:decompo-Dij}
D_{2\s,j}=\frac {(-1)^{j}2} {\binom {2N+j+1}{j-1}}\cdot S+\frac {(-1)^{N+j}} {(N+1)\binom
{2N+j+1}{j-1}}+\frac{(-1)^{j}(j-1)N!(N+j-1)!}{(2N+j+1)!},
\end{align}
where $S$ stands for the double sum in~\eqref{eq:Dij-lastrow},
that is
\begin{multline*}
S=\sum _{i=0}
^{2N}(-1)^i\left(2^i-\binom i{N}\right)
\frac {2N+j-i} {i+1}\sum _{\ell=0} ^{i}(-1)^\ell \binom {2N+j} {i-\ell}\binom
{\ell+j-1}\ell\frac {1} {2N+\ell+j+2}.
\end{multline*}

By extending the sum over $i$, we rewrite $S$ as
\begin{multline*}
\sum _{i=0}
^{2N+1}(-1)^i\left(2^i-\binom i{N}\right)
\frac {2N+j-i} {i+1}\\
\cdot \sum _{\ell=0} ^{i}(-1)^\ell \binom {2N+j}
{i-\ell}\binom {\ell+j-1}\ell\frac {1} {2N+\ell+j+2}\\
+\left(2^{2N+1}-\binom
{2N+1}{N}\right)
\frac {j-1} {2N+2}\kern5cm\\
\cdot \sum _{\ell=0} ^{2N+1}(-1)^\ell \binom {2N+j}
{2N+1-\ell}\binom {\ell+j-1}\ell\frac {1} {2N+\ell+j+2}.
\end{multline*}

We concentrate now on the evaluation of the second sum over $\ell$.
It can be written as $_2F_1$-series which is summable by means of
the Chu--Vandermonde summation formula, so that we obtain
\begin{align*}
 & \left(2^{2N+1}-\binom{2N+1}{N}\right)\frac {j-1} {2N+2}\sum _{\ell=0} ^{2N+1}(-1)^\ell \binom {2N+j}
{2N+1-\ell}\binom {\ell+j-1}\ell\frac {1} {2N+\ell+j+2}\\
 &=\left(2^{2N+1}-\binom{2N+1}{N}\right)\frac {j-1} {2N+2}\binom {2N+j} {2N+1}\frac {1} {2N+j+2}
     {}_2 F_1\!\left[\begin{matrix} 2N+j+2,-2N-1\\2N+j+3\end{matrix}; 1\right]\\
 &= \left(2^{2N+1}-\binom{2N+1}{N}\right)\frac {j-1} {2N+2}\binom {2N+j} {2N+1}\frac {1} {2N+j+2}\frac {(1)_{2N+1}} {(2N+j+3)_{2N+1}}\\
 &= \frac {(j-1)\,(2N+j)!\,(2N+j+1)!} {(2N+2)\,(j-1)!\,(4N+j+3)!}\left(2^{2N+1}-\binom {2N+1}{N}\right).
\end{align*}
Consequently, our sum $S$ is equal to
\begin{align*}
&\sum _{i=0}
^{2N+1}(-1)^i\left(2^i-\binom i{N}\right)
\frac {2N+j-i} {i+1}\sum _{\ell=0} ^{i}(-1)^\ell \binom {2N+j}
{i-\ell}\binom {\ell+j-1}\ell\frac {1} {2N+\ell+j+2}\\
&+\frac {(j-1)\,(2N+j)!\,(2N+j+1)!} {(2N+2)\,(j-1)!\,(4N+j+3)!}\left(2^{2N+1}-\binom {2N+1}{N}\right).
\end{align*}

Next we apply the partial fraction expansion
$$
\frac {1} {(i+1)(2N+\ell+j+2)}=\frac {1} {2N+\ell+j-i+1}\left( \frac
{1} {i+1}-\frac {1} {2N+\ell+j+2}\right).
$$
Thus, we have
\begin{equation} \label{eq:1}
S=S_1-S_2 + \frac {(j-1)\,(2N+j)!\,(2N+j+1)!} {(2N+2)\,(j-1)!\,(4N+j+3)!}\left(2^{2N+1}-\binom {2N+1}{N}\right),
\end{equation}
where
\begin{multline*}
S_1=\frac {1} {(2N+j+1)} \sum _{i=0}
^{2N+1}(-1)^i\left(2^i-\binom i{N}\right)
\frac {2N+j-i} {i+1}\\
\cdot \sum _{\ell=0} ^{i}(-1)^\ell \binom {2N+j+1} {i-\ell}\binom
{\ell+j-1}\ell
\end{multline*}
and
\begin{multline*}
S_2=\frac {1} {(2N+j+1)} \sum _{i=0}
^{2N+1}(-1)^i\left(2^i-\binom i{N}\right)
(2N+j-i)\\
\cdot \sum _{\ell=0} ^{i}(-1)^\ell \binom {2N+j+1} {i-\ell}\binom
{\ell+j-1}\ell\frac {1} {2N+\ell+j+2}.
\end{multline*}

We start with the evaluation of $S_1$. We have
\begin{multline*}
\sum _{\ell=0} ^{i}(-1)^\ell \binom {2N+j+1} {i-\ell}\binom
{\ell+j-1}\ell =\binom {2N+j+1}i
{}_2 F_1\!\left[\begin{matrix} j,-i\\
2N+j-i+2\end{matrix}; 1\right]\\
=\binom {2N+j+1}i\frac {(2N-i+2)_i} {(2N+j-i+2)_i} =\binom {2N+1}i,
\end{multline*}
and therefore
\begin{align*} 
S_1&=\frac {1} {(2N+j+1)} \sum _{i=0}
^{2N+1}(-1)^i\left(2^i-\binom i{N}\right)
\frac {2N+j-i} {2N+2}\binom {2N+2}{i+1}\\
&=\frac {1} {(2N+j+1)}
\sum _{i=0} ^{2N+1}(-1)^i\left(2^i-\binom i{N}\right)\\
&\kern3cm \cdot \frac {1} {2N+2}\left( (2N+2)\binom
{2N+1}{i+1}+(j-1)\binom {2N+2}{i+1}
\right)\\
&=\frac {1} {(2N+j+1)} \bigg( -\frac {1}
{2}(-1)^{2N+1}+\frac {1} {2} +\frac {j-1} {2N+2}\left(-\frac {1}
{2}(-1)^{2N+2}+\frac {1} {2}\right)
\bigg)\\
&\kern1cm -\frac {1} {(2N+j+1)} \bigg(
(-1)^N\binom {2N+1}{N+1}
\,{}_2 F_1\!\left[\begin{matrix} N+1,-N\\
N+2\end{matrix}; 1\right]\\
&\kern3cm + \frac {j-1} {2N+2}(-1)^N\binom {2N+2}{N+1}
\,{}_2 F_1\!\left[\begin{matrix} N+1,-N-1\\
N+2\end{matrix}; 1\right]
\bigg)\\
&=\frac {1} {(2N+j+1)} \bigg( 1-
(-1)^N\binom {2N+1}{N+1}\frac {N!\,(N+1)!} {(2N+1)!}\\
&\kern5cm -(-1)^N \frac {j-1} {2N+2}\binom {2N+2}{N+1}\frac
{(N+1)!^2} {(2N+2)!}
\bigg)\\
&=\frac {1} {2N+j+1}-\frac{(-1)^N} {2N+2}.
\end{align*}

Next we consider the evaluation of $S_2$. We have
\begin{multline*}
S_2=\frac {1} {(2N+j+1)} \sum _{i=0}
^{2N+1}(-1)^i\left(2^i-\binom i{N}\right)
(2N+j-i)\\
\cdot \sum _{\ell=0} ^{i}(-1)^\ell \binom {\ell+j-1}\ell \frac {1}
{2\pi \sqrt{-1}} \bigg(\int _{C} ^{}\frac {(1+z)^{2N+j+1}}
{z^{i-\ell+1}}\,dz\bigg) \bigg(\int _{0}
^{1}x^{2N+\ell+j+1}\,dx\bigg),
\end{multline*}
where $C$ is a small contour in the complex plane encircling the
origin in positive orientation. The sum over $\ell$ can be extended
to a sum from $0$ to $\infty$ since the terms corresponding to
$\ell$'s which are larger than $i$ vanish. Hence, by evaluating the
(geometric) sum over $\ell$, we may rewrite this as
\begin{align*} 
S_2&= \frac {1} {(2N+j+1)2\pi \sqrt{-1}} \int
_{C} ^{}\int _{0} ^{1}
\sum _{i=0} ^{2N+1}(-1)^i\left(2^i-\binom i{N}\right)\\
&\kern7cm \cdot \frac {d} {dz}\big(z^{2N+j-i}\big) \frac
{(1+z)^{2N+j+1}} {z^{2N+j}}\frac {x^{2N+j+1}} {(1+xz)^j}\,dz
\,dx\\
&= -\frac {1} {(2N+j+1)2\pi \sqrt{-1}} \int _{C}
^{}\int _{0} ^{1}
\sum _{i=0} ^{2N+1}(-1)^i\left(2^{2N+1-i}-\binom {2N+1-i}{N}\right)\\
&\kern7cm \cdot \frac {d} {dz}\big(z^{i+j-1}\big) \frac
{(1+z)^{2N+j+1}} {z^{2N+j}}\frac {x^{2N+j+1}} {(1+xz)^j}\,dz \,dx.
\end{align*}
Consequently,
\begin{equation} \label{eq:2}
S_2=\frac {1} {(2N+j+1)}(-S_3+S_4),
\end{equation}
where
$$
S_3=\frac {1} {2\pi \sqrt{-1}} \int _{C} ^{}\int _{0} ^{1} \sum
_{i=0} ^{2N+1}(-1)^i2^{2N+1-i} \frac {d} {dz}\big(z^{i+j-1}\big)
\frac {(1+z)^{2N+j+1}} {z^{2N+j}}\frac {x^{2N+j+1}} {(1+xz)^j}\,dz
\,dx
$$
and
\begin{multline*}
S_4=\frac {1} {2\pi \sqrt{-1}} \int _{C} ^{}\int _{0} ^{1}
\sum _{i=0} ^{2N+1}(-1)^i\binom {2N+1-i}{N}\\
\cdot \frac {d} {dz}\big(z^{i+j-1}\big) \frac {(1+z)^{2N+j+1}}
{z^{2N+j}}\frac {x^{2N+j+1}} {(1+xz)^j}\,dz \,dx.
\end{multline*}

Now we evaluate $S_3$. Similarly to before, we may extend the sum
over $i$ to a sum from $0$ to $\infty$. The resulting sum is again a
geometric series, so that we obtain
\begin{align*}
S_3&=\frac {1} {2\pi \sqrt{-1}} \int _{C} ^{}\int _{0} ^{1} 2^{2N+1}
\frac {d} {dz}\left(\frac {z^{j-1}} {1+\frac {z} {2}}\right) \frac
{(1+z)^{2N+j+1}} {z^{2N+j}}\frac {x^{2N+j+1}} {(1+xz)^j}\,dz
\,dx\\
&=\frac {2^{2N+1}} {2\pi \sqrt{-1}} \int _{C} ^{}\int _{0} ^{1}
\frac {\left(j-1+(j-2)\frac {z} {2}\right)} {\left(1+\frac {z}
{2}\right)^2} \frac {(1+z)^{2N+j+1}} {z^{2N+2}}\frac {x^{2N+j+1}}
{(1+xz)^j}\,dz \,dx.
\end{align*}
Now we do the substitution $z\to z/(1-z)$. Thereby, we obtain
\begin{align*}
S_3&=\frac {2^{2N+1}} {2\pi \sqrt{-1}} \int _{C} ^{}\int _{0} ^{1}
\frac {\left(j-1-j\frac {z} {2}\right)} {\left(1-\frac {z}
{2}\right)^2} \frac {x^{2N+j+1}} {z^{2N+2}\,(1-z(1-x))^j}\,dz
\,dx\\
&=\frac {2^{2N+1}} {2\pi \sqrt{-1}} \int _{C} ^{}\int _{0} ^{1}
\frac {\left(j-1-j\frac {z} {2}\right)} {\left(1-\frac {z}
{2}\right)^2} \frac {x^{2N+j+1}} {z^{2N+2}} \sum_{h=0}^{2N+1}\binom
{h+j-1}h z^h(1-x)^h\,dz
\,dx\\
&=\frac {2^{2N+1}} {2\pi \sqrt{-1}} \int _{C} ^{} \frac
{\left(j-1-j\frac {z} {2}\right)} {\left(1-\frac {z} {2}\right)^2}
\sum_{h=0}^{2N+1}\binom {h+j-1}h \frac {(2N+j+1)!\,h!} {(2N+j+h+2)!}
\frac {1} {z^{2N+2-h}}
\,dz\\
&=2^{2N+1}
\sum_{h=0}^{2N+1}\binom {h+j-1}h \frac {(2N+j+1)!\,h!} {(2N+j+h+2)!}\\
&\kern5cm \cdot \left((j-1)(2N+2-h)-j(2N+1-h)
\right)2^{-2N-1+h}\\
&= \sum_{h=0}^{2N+1}\binom {h+j-1}h \frac {(2N+j+1)!\,h!}
{(2N+j+h+2)!}
\left(-2N-2+h+j\right)2^{h}\\
&= \sum_{h=0}^{2N+1} \bigg(\binom {h+j}{h+1} \frac
{(2N+j+1)!\,(h+1)!} {(2N+j+h+2)!}2^{h+1} - \binom {h+j-1}h \frac
{(2N+j+1)!\,h!} {(2N+j+h+1)!}2^h
\bigg)\\
&=\binom {2N+j+1}{2N+2} \frac {(2N+j+1)!\,(2N+2)!}
{(4N+j+3)!}2^{2N+2}
-1\\
&=2^{2N+2} \frac {(2N+j+1)!^2} {(4N+j+3)!\,(j-1)!} -1 .
\end{align*}

Finally, we compute $S_4$. In the earlier definition of $S_4$, we
may again extend the sum over $i$ to a sum from $0$ to $\infty$.
Using
$$
\binom {2N+1-i}{N}=\frac {1} {2\pi \sqrt{-1}} \int_{\tilde C}\frac
{(1+u)^{2N+1-i}} {u^{N+1}}\,du,
$$
where $\tilde C$ is a small contour in the complex plane encircling
the origin in positive orientation, we then obtain
\begin{align*}
S_4&=\frac {1} {(2\pi \sqrt{-1})^2} \int _{\tilde C} ^{} \int _{C}
^{}\int _{0} ^{1} \sum _{i=0} ^{\infty}(-1)^i \frac {d}
{dz}\big(z^{i+j-1}\big) \frac {(1+u)^{2N+1-i}} {u^{N+1}}
\\
&\kern7cm \cdot \frac {(1+z)^{2N+j+1}} {z^{2N+j}}\frac {x^{2N+j+1}}
{(1+xz)^j}
\,du\,dz\,dx\\
&=\frac {1} {(2\pi \sqrt{-1})^2} \int _{\tilde C} ^{} \int _{C}
^{}\int _{0} ^{1} \frac {d} {dz}\left(\frac {z^{j-1}} {1+\frac {z}
{1+u}}\right) \frac {(1+u)^{2N+1}} {u^{N+1}}
\\
&\kern7cm \cdot \frac {(1+z)^{2N+j+1}} {z^{2N+j}}\frac {x^{2N+j+1}}
{(1+xz)^j}
\,du\,dz\,dx\\
&=\frac {1} {(2\pi \sqrt{-1})^2} \int _{\tilde C} ^{} \int _{C}
^{}\int _{0} ^{1} \frac {(j-1)+(j-2)\frac {z} {1+u}} {\left(1+\frac
{z} {1+u}\right)^2} \frac {(1+u)^{2N+1}} {u^{N+1}}
\\
&\kern7cm \cdot \frac {(1+z)^{2N+j+1}} {z^{2N+2}}\frac {x^{2N+j+1}}
{(1+xz)^j} \,du\,dz\,dx.
\end{align*}
Again we do the substitution $z\to z/(1-z)$. Thereby, we obtain
\begin{align*}
S_4&=\frac {1} {(2\pi \sqrt{-1})^2} \int _{\tilde C} ^{} \int _{C}
^{}\int _{0} ^{1} \frac {(j-1)(1+u)-z(1+(j-1)u)} {\left(1-\frac {uz}
{1+u}\right)^2}
\\
&\kern7cm \cdot \frac {(1+u)^{2N}} {u^{N+1}} \frac {x^{2N+j+1}}
{z^{2N+2}(1-z(1-x))^j}
\,du\,dz\,dx\\
&=\frac {1} {(2\pi \sqrt{-1})^2} \int _{\tilde C} ^{} \int _{C}
^{}\int _{0} ^{1} \frac {(j-1)(1+u)-z(1+(j-1)u)} {\left(1-\frac {uz}
{1+u}\right)^2}
\\
&\kern4.5cm \cdot \frac {(1+u)^{2N}} {u^{N+1}} \frac {x^{2N+j+1}}
{z^{2N+2}} \sum _{h=0}^{2N+1}\binom {h+j-1}h z^h(1-x)^h
\,du\,dz\,dx\\
&=\frac {1} {(2\pi \sqrt{-1})^2} \int _{\tilde C} ^{} \int _{C} ^{}
(j-1)(1+u)-z(1+(j-1)u) \sum _{s=0}^{\infty}(s+1)\left(\frac {uz}
{1+u}\right)^s \frac {(1+u)^{2N}} {u^{N+1}}
\\
&\kern4.5cm \cdot \sum _{h=0}^{2N+1}\binom {h+j-1}h \frac
{(2N+j+1)!\,h!} {(2N+j+h+2)!}\frac {1} {z^{2N+2-h}}
\,du\,dz\\
&=\sum _{h=0}^{2N+1}\binom {h+j-1}h \frac {(2N+j+1)!\,h!}
{(2N+j+h+2)!}
\bigg((j-1)(2N+2-h)\binom {h}{h-N-1}\\
&\kern3cm -(2N+1-h)\binom {h}{h-N} -(j-1)(2N+1-h)\binom {h}{h-N-1}
\bigg)\\
&=\sum _{h=0}^{2N+1}\binom {h+j-1}h \frac {(2N+j+1)!\,h!}
{(2N+j+h+2)!}
\frac {h!} {(h-N)!\,(N+1)!}\\
&\kern3cm \cdot \big((j-1)(h-N) -(2N+1-h)(N+1)
\big)\\
&=\sum _{h=0}^{2N+1} \bigg(
\frac {(h+j-1)!\,h!\,(2N+j+1)!} {(j-1)!\,(h-N-1)!\,(N+1)!\,(2N+j+h+1)!}\\
&\kern3cm -\frac {(h+j)!\,(h+1)!\,(2N+j+1)!}
{(j-1)!\,(h-N)!\,(N+1)!\,(2N+j+h+2)!}
\bigg)\\
&\kern1cm +(j-1)\sum _{h=0}^{2N+1}
\frac {(h+j-1)!\,(h+1)!\,(2N+j+1)!} {(j-1)!\,(h-N)!\,(N+1)!\,(2N+j+h+2)!}\\
&= -\frac {(2N+1+j)!^2\,(2N+2)!} {(j-1)!\,(N+1)!^2\,(4N+j+3)!}
\\
&\kern1cm +(j-1)\sum _{h=0}^{2N+1} \frac
{(h+j-1)!\,(h+1)!\,(2N+j+1)!} {(j-1)!\,(h-N)!\,(N+1)!\,(2N+j+h+2)!}.
\end{align*}

In total, if we substitute our findings in \eqref{eq:1} and
\eqref{eq:2}, then we have shown that
\begin{align*}
S&=S_1-S_2 + \frac {(j-1)\,(2N+j)!\,(2N+j+1)!} {(2N+2)\,(j-1)!\,(4N+j+3)!}\left(2^{2N+1}-\binom {2N+1}{N}\right)\\
&=S_1+\frac {S_3-S_4} {2N+j+1}+ \frac {(j-1)\,(2N+j)!\,(2N+j+1)!} {(2N+2)\,(j-1)!\,(4N+j+3)!}\left(2^{2N+1}-\binom {2N+1}{N}\right)\\
&= \frac {1} {2N+j+1}-\frac {(-1)^N}{2N+2}+ \frac {(j-1)\,(2N+j)!\,(2N+j+1)!} {(2N+2)\,(j-1)!\,(4N+j+3)!}\left(2^{2N+1}-\binom {2N+1}{N}\right)\\
&\quad+\frac {1} {2N+j+1} \Bigg( 2^{2N+2}\frac {(2N+j+1)!^2}
{(4N+j+3)!\,(j-1)!}-1
+\frac {(2N+1+j)!^2\,(2N+2)!}{(j-1)!\,(N+1)!^2\,(4N+j+3)!}\\
&\kern2cm -(j-1)\sum _{h=0}^{2N+1} \frac
{(h+j-1)!\,(h+1)!\,(2N+j+1)!}
{(j-1)!\,(h-N)!\,(N+1)!\,(2N+j+h+2)!}\Bigg).
\end{align*}
A combination of the latter identity with~\eqref{eq:decompo-Dij} leads, after some trivial simplification, to~\eqref{eq:simplification-Dij}.
This ends the proof.
\qed

%
%

\section{Proof of Corollary~\ref{cor:Ncentered(2n,2x+1)}}

Throughout this section, all asymptotics are taken as $n\to\infty$ and $x\sim an$. By Theorem~\ref{thm:Ncentered(2n,2x+1)}, the probability
that a random vertically symmetric rhombus tiling of a $(2n,2x+1,2n)$ hexagon is
centered is equal to $R(n,x)$. Using expressions~\eqref{eq:def R(x,n)} and~\eqref{eq:def-Ux}, we see that $R(n,x)$ can be written in
hypergeometric notation (after reversing the order of  summation in~\eqref{eq:def-Ux} and
dividing the sum in four parts) as
\begin{align*}
R&(n,x)=2^{3n-2}\frac{(2x+2)!(x+2n)!}{n!(x+1)!(2x+4n)!}\\
       \times&\bigg{(} \frac{(2n-1)!!(\frac{3}{2}-n)_{2n-1}(x+1)_{n-1}(x+n+1)_{n}}{(n-1)!n!}
                         {}_{4}F_3\left[{{1,n+\frac{1}{2},1-n,-n-x}\atop {n+1,\frac{3}{2}-n, 1-n-x}};-1\right]\\
      &- \frac{(2n-1)!!(\frac{3}{2}-n)_{2n-1}(x+1)_{n}(x+n+2)_{n-1}}{(n-1)!n!}
                         {}_{4}F_3\left[{{1,x+n+1,n+\frac{1}{2},1-n}\atop {n+1,\frac{3}{2}-n}, x+n+2};-1\right]\\
      &+\frac{(-1)^{n+1}(2n)!!(\frac{3}{2}-n)_{2n-1}(x+1)_{n-1}(x+n+1)_{n}}{(n-1)!n!}
                         {}_{4}F_3\left[{{1,n+\frac{1}{2},1-n,-n-x}\atop {n+1,\frac{3}{2}-n, 1-n-x}};1\right]\\
      &- \frac{(-1)^{n+1}(2n)!!(\frac{3}{2}-n)_{2n-1}(x+1)_{n}(x+n+2)_{n-1}}{(n-1)!n!}
                         {}_{4}F_3\left[{{1,x+n+1,n+\frac{1}{2},1-n}\atop {n+1,\frac{3}{2}-n, x+n+2}};1\right]\bigg{)},
\end{align*}
which simplifies to
\begin{align}\label{eq:simplification-ratio}
R(n,x)&=(-1)^{n+1}2^{2n-2}\frac{ (x+2n)}{n(x+n)}
          \frac{\Gamma(2x+2)\Gamma(x+2n)^2\Gamma(n+\frac{1}{2})}{\Gamma(x+1)^2\Gamma(2x+4n)\Gamma(\frac{3}{2}-n)\Gamma(n)^2}\\
       &\quad\times\bigg{(} (-1)^{n+1}\frac{(2n)!}{4^n\,n!^2}
                         {}_{4}F_3\left[{{1,n+\frac{1}{2},1-n,-n-x}\atop {n+1,\frac{3}{2}-n, 1-n-x}};-1\right]\nonumber\\
       &\quad\quad  - (-1)^{n+1}\frac{(2n)!}{4^n\,n!^2}\frac{x+n}{x+n+1}
            {}_{4}F_3\left[{{1,x+n+1,n+\frac{1}{2},1-n}\atop {n+1,\frac{3}{2}-n, x+n+2}};-1\right]\nonumber\\
      &\quad\quad\quad+ {}_{4}F_3\left[{{1,n+\frac{1}{2},1-n,-n-x}\atop {n+1,\frac{3}{2}-n, 1-n-x}};1\right]\\
      &\quad\quad\quad\quad-\frac{x+n}{x+n+1} {}_{4}F_3\left[{{1,x+n+1,n+\frac{1}{2},1-n}\atop {n+1,\frac{3}{2}-n, x+n+2}};1\right]\bigg{)}.\nonumber
\end{align}

Using Stirling's formula, it is a routine matter to determine the asymptotic behavior
of the term in the first row in~\eqref{eq:simplification-ratio}:
\begin{align}\label{eq:asympt-facteur-ratio}
(-1)^{n+1}2^{2n-2}\frac{ (x+2n)}{n(x+n)}
          \frac{\Gamma(2x+2)\Gamma(x+2n)^2\Gamma(n+\frac{1}{2})}{\Gamma(x+1)^2\Gamma(2x+4n)\Gamma(\frac{3}{2}-n)\Gamma(n)^2}
          \sim\frac{\sqrt{a(a+2)}}{\pi (a+1)}\frac{1}{n}.
\end{align}
To deal with the ${}_{4}F_3$-series in~\eqref{eq:simplification-ratio}, we shall use the next lemma.

\begin{lem}\label{lem:asympt-hypergeoseries}
Let $b$ be a real number with $|b|>1$. Then,  for any nonnegative integer $r$
and any sequence $(b_n)_{n\geq 1}$ with $b_n\to b$ as
$n\to\infty$, we have
\begin{align}\label{eq:asympt-hypergeoseries}
\lim_{n\to\infty} \frac{1}{n} \,
{}_{4}F_3\left[{{1,n+\frac{1}{2},1-n, b_n n+r}\atop
{n+1,\frac{3}{2}-n}, b_n n+r+1};1\right]
=\frac{2b}{b+1}\sqrt{\frac{b+1}{b-1}}
\arctan\left(\sqrt{\frac{b-1}{b+1}}\right).
\end{align}
\end{lem}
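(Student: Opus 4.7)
The plan is to rewrite the ${}_4F_3$ as a weighted Riemann sum for the improper integral $b\int_0^1 dy/((b+y)\sqrt{1-y^2})$ and then evaluate that integral in closed form to match the claimed right-hand side. The key input is a uniform asymptotic for the Pochhammer ratio $\rho_{n,k}:=(n+\tfrac12)_k(1-n)_k/[(n+1)_k(\tfrac32-n)_k]$ for $0\le k\le n-1$.

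First, the identity $(a)_k/(a+1)_k=a/(a+k)$ with $a=b_n n+r$ absorbs the fourth upper parameter, giving
\[
{}_4F_3\!\left[\begin{matrix}1,n+\tfrac12,1-n,b_n n+r\\ n+1,\tfrac32-n,b_n n+r+1\end{matrix};1\right]=\sum_{k=0}^{n-1}\rho_{n,k}\cdot\frac{b_n n+r}{b_n n+r+k}.
\]
Using the reflection identities $(1-n)_k=(-1)^k\Gamma(n)/\Gamma(n-k)$ and $(\tfrac32-n)_k=(-1)^k\Gamma(n-\tfrac12)/\Gamma(n-\tfrac12-k)$ together with $\Gamma(m+\tfrac12)=\sqrt\pi(2m)!/(4^m m!)$, one derives the exact closed form
\[
\rho_{n,k}=\frac{\pi}{4^{2n-1}}\cdot\frac{n\,\Gamma(n)^2}{(n-\tfrac12)\Gamma(n-\tfrac12)^2}\,\binom{2n+2k}{n+k}\binom{2n-2k-2}{n-k-1}.
\]
Stirling's asymptotic $\binom{2m}{m}/4^m\sim1/\sqrt{\pi m}$ together with $\Gamma(n)^2/\Gamma(n-\tfrac12)^2\sim n$ then yields $\rho_{n,k}=(1+o(1))\,n/\sqrt{n^2-k^2}$ uniformly for $k/n$ in compact subsets of $[0,1)$, and an absolute pointwise bound $\rho_{n,k}\le C\, n/\sqrt{(n+k)(n-k)}$ that holds uniformly up to $k=n-1$.

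Inserting these estimates gives
\[
\tfrac1n\cdot{}_4F_3[\cdots;1]=\tfrac{b_n n+r}{n}\cdot\tfrac1n\sum_{k=0}^{n-1}\rho_{n,k}\cdot\frac{n}{b_n n+r+k},
\]
which is a Riemann sum for the integrable improper integral $b\int_0^1 dy/((b+y)\sqrt{1-y^2})$. The delicate point is the endpoint $k\approx n-1$, where the bulk asymptotic degenerates. One handles this by splitting at $k/n=1-\delta$: on $[0,1-\delta]$ the standard Riemann sum theorem applies (the integrand being continuous), while on the tail the pointwise bound together with $|b_n n+r+k|\ge(|b|-1)n$ (valid since $|b|>1$) contributes at most $O(\sqrt\delta)$, matching the integral's tail $\int_{1-\delta}^1(\cdots)\,dy=O(\sqrt\delta)$. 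Letting first $n\to\infty$ and then $\delta\to 0$ establishes the limit. The case $b<-1$ (where $b_n n+r+k$ is negative) is handled by the same argument with obvious sign modifications.

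Finally, to evaluate the integral, substitute $y=\sin\theta$ and then Weierstrass $u=\tan(\theta/2)$, reducing it to the rational $\int_0^1 2\,du/(bu^2+2u+b)$. Completing the square and using $\arctan A-\arctan B=\arctan((A-B)/(1+AB))$ yields
\[
b\int_0^1\frac{dy}{(b+y)\sqrt{1-y^2}}=\frac{2b}{b+1}\sqrt{\frac{b+1}{b-1}}\,\arctan\sqrt{\frac{b-1}{b+1}},
\]
valid uniformly for both $b>1$ and $b<-1$: the ratios $(b\pm1)/(b\mp1)$ are positive when $|b|>1$, and in the case $b<-1$ the integral itself is negative but $b$ is also negative, yielding the same positive product. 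The principal obstacle is the Riemann sum analysis above: controlling the contribution near $k=n-1$ requires the uniform pointwise bound on $\rho_{n,k}$ coming from the exact binomial formula, rather than merely the asymptotic valid on compact subsets of $\{k/n<1\}$.
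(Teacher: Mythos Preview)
Your argument is correct and follows the same overall strategy as the paper: rewrite the ${}_4F_3$ as the finite sum $\sum_{k=0}^{n-1}\rho_{n,k}\cdot\frac{b_nn+r}{b_nn+r+k}$, show that after rescaling $y=k/n$ this becomes a Riemann sum for $b\int_0^1\frac{dy}{(b+y)\sqrt{1-y^2}}$, and evaluate that integral. The one genuine methodological difference is in how the singular endpoint $k\approx n-1$ is controlled. The paper computes the second logarithmic derivative of the summand and uses the monotonicity of the trigamma function to establish convexity in $k$; convexity together with the boundary values $F(n,0)=1$ and $F(n,n-1)\asymp n^{1/2}$ then bounds the sum--integral error by $O(n^{1/2})$, and a truncation at $k=n-n^{1/3}$ handles the rest. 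You instead derive the exact closed form
\[
\rho_{n,k}=\frac{n!^2(n-1)!^2}{(2n)!\,(2n-2)!}\binom{2n+2k}{n+k}\binom{2n-2k-2}{n-k-1},
\]
from which the uniform pointwise bound $\rho_{n,k}\le Cn/\sqrt{(n+k)(n-k)}$ follows via $\binom{2m}{m}\le 4^m/\sqrt{m+1}$, and you control the tail by a $\delta$-splitting and a double limit. Your approach avoids the trigamma computation and is arguably more elementary, at the cost of verifying the binomial identity; the paper's convexity argument is slicker once one knows the trigamma trick, and gives an explicit $O(n^{5/6})$ error rather than a qualitative $O(\sqrt\delta)$ one. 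Both lead to the same integral, and your Weierstrass-substitution evaluation of it is equivalent to (and more detailed than) the paper's, which simply declares the final step ``a routine computation.''
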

Before we prove this result, we show how it leads to
Corollary~\ref{cor:Ncentered(2n,2x+1)}. First, Lemma~\eqref{eq:asympt-hypergeoseries}
gives the asymptotic behavior of the last two ${}_{4}F_3$-series in~\eqref{eq:simplification-ratio}. Moreover,
it is easily checked that the absolute value of the first (resp., second) ${}_{4}F_3$-series is less than the third ${}_{4}F_3$-series
(resp., fourth) ${}_{4}F_3$-series  in~\eqref{eq:simplification-ratio} which is
$O(n)$ by Lemma~\ref{lem:asympt-hypergeoseries}. This, combined with the fact that $\frac{(2n)!}{4^n\,n!^2}\sim (\pi n)^{-1/2}$,
shows that the contribution of the first two ${}_{4}F_3$-series in~\eqref{eq:simplification-ratio} is negligible in the limit.
Altogether,  with~\eqref{eq:asympt-facteur-ratio} and Lemma~\ref{lem:asympt-hypergeoseries}, we see that $R(n,x)$ is
asymptotically equivalent to
\begin{align*}
&\frac{\sqrt{a(a+2)}}{\pi (a+1)} \left(
  \frac{2(a+1)}{a}\sqrt{\frac{a}{a+2}} \arctan\left(\sqrt{\frac{a+2}{a}}\right)
-\frac{2(a+1)}{a+2}\sqrt{\frac{a+2}{a}}
\arctan\left(\sqrt{\frac{a}{a+2}}\right)
\right)\\
&=\frac{2}{\pi}\arctan\left(\frac{1}{\sqrt{a(a+2)}}\right)=\frac{2}{\pi}\arcsin\left(\frac{1}{a+1}\right),
\end{align*}
as desired.
To conclude the proof of Corollary~\ref{cor:Ncentered(2n,2x+1)}, it remains to prove Lemma~\ref{lem:asympt-hypergeoseries}.\\

\noindent\emph{Proof of Lemma~\ref{lem:asympt-hypergeoseries}}. If we write
the  ${}_{4}F_3$-series in~\eqref{eq:asympt-hypergeoseries}
 explicitly as a sum over $k$, after some simplification, we obtain
\begin{align}
 {}_{4}F_3&\left[{{1,n+\frac{1}{2},1-n, b_n n+r,}\atop
{n+1,\frac{3}{2}-n}, b_n n+r+1};1\right]\nonumber\\
&\qquad=  \sum_{k=0}^{n-1}\frac{(n+\frac{1}{2})_k(1-n)_k (b_n n+r)_k}{(n+1)_k(\frac{3}{2}-n)_k (b_n n+r+1)_k}\nonumber\\
&\qquad=  \sum_{k=0}^{n-1}\frac{(b_n n+r)n(n-\frac{1}{2})}{(b_n n+r+k)}
                                              \frac{ \Gamma(n)^2\Gamma(n+k+\frac{1}{2})\Gamma(n-k-\frac{1}{2})}
                                                        { \Gamma(n+\frac{1}{2})^2\Gamma(n+k+1)\Gamma(n-k)}.\label{eq:hypergeo-explicite_lem}
\end{align}
Let us denote the summand in the latter sum by $F(n,k)$. It is easy to
check  that ${F(n,k)>0}$ for $0\leq k\leq n-1$ and that
\begin{align}\label{eq:asymptoLastTerm}
F(n,0)=1\quad\text{and}\quad F(n,n-1)\sim
\frac{b}{b+1}\sqrt{\frac{\pi }{2}}n^{1/2}  \quad\text{as
$n\to\infty$.}
\end{align}
Moreover, a routine computation shows that $(\frac{\partial^2}{\partial k^2} F(n,k))/F(n,k)$ is equal to
\begin{align*}
&\bigg{(}\psi(n+k+1/2)-\psi(n-k-1/2)-\psi(n+k+1)+\psi(n-k)-\frac{1}{b_n+k+r} \bigg{)}^2\\
&\;+\bigg{(}\psi_1(n+k+1/2)+\psi_1(n-k-1/2)-\psi_1(n+k+1)-\psi_1(n-k)+\frac{1}{(b_n+k+r)^2}
\bigg{)},
\end{align*}
where $\psi$ and $\psi_1$ are the digamma and trigamma functions
defined by $\psi(x)=\frac{d}{dx}\ln(\Gamma(x))$ and
$\psi_1(x)=\frac{d^2}{dx^2}\ln(\Gamma(x))=\frac{d}{dx}\psi(x)$.
Since the trigamma function $\psi_1$ is decreasing, the above expression is positive.
Consequently, for fixed $n\geq 1$, the summand $F(n,k)$ is convex as a function of
$k$. Combined with~\eqref{eq:asymptoLastTerm}, this implies that the sum in~\eqref{eq:hypergeo-explicite_lem} may be approximated by an
integral and
\begin{align}\label{eq:serie-integrale}
\sum_{k=0}^{n-1}F(n,k)&=\int_0^{n-1}F(n,k)\, dk+O(n^{1/2})
                                                    =\int_{n^{1/3}}^{n-n^{1/3}}F(n,k) \,dk+O(n^{1/2+1/3}).
\end{align}
Using the expression~\eqref{eq:hypergeo-explicite_lem} for $F(n,k)$
and the asymptotic approximation
\begin{align*}
\Gamma(z+\tfrac{1}{2})&=z^{-1/2}\Gamma(z+1)\,\left(1+O\left(z^{-1}\right)\right),\qquad  z\to\infty,
\end{align*}
 we see after some manipulation that
\begin{align*}
F(n,k)=\frac{b_n n}{(b_n  n+k) \sqrt{1-\frac{k^2}{n^2}}}\left(1+O(n^{-1/3})\right),
\qquad\text{for $n^{1/3}\leq k\leq
n-n^{1/3}$}.
\end{align*}
Combined with~\eqref{eq:serie-integrale}, this leads to
\begin{align*}
\sum_{k=0}^{n-1}F(n,k) & =\left(\int_{n^{1/3}}^{n-n^{1/3}}\frac{b_n
n}{(b_n  n+k) \sqrt{1-\frac{k^2}{n^2}}} \,dk\right)
     \left(  1+O(n^{-1/3})      \right) +O(n^{5/6}),
\end{align*}
which gives after the substitution $y=k/n$
\begin{align*}
\sum_{k=0}^{n-1}F(n,k) &
=n\,\left(\int_{n^{-2/3}}^{1-n^{-2/3}}\frac{b_n}{(b_n+y)
\sqrt{1-y^2}} \,dy\right)
     \left(  1+O(n^{-1/3})      \right) +O(n^{5/6}).
\end{align*}
Dividing both parts by $n$ and taking the limit, we obtain
\begin{align*}
\lim_{n\to\infty} \frac{1}{n} \,
{}_{4}F_3\left[{{1,n+\frac{1}{2},1-n, b_n n+r,}\atop
{n+1,\frac{3}{2}-n}, b_n n+r+1};1\right]
=\int_0^1\frac{b}{b+y}\frac{1}{\sqrt{1-y^2}} dy.
\end{align*}
To finish the proof of the lemma,  it remains to check that the
above integral is equal  to the right-hand side of~\eqref{eq:asympt-hypergeoseries},
which amounts to a routine computation.
\qed


\end{document}